\definecolor{Mygreen}{rgb}{0.13,0.7,0.25}
\definecolor{Myblue}{rgb}{0.05,0.61,0.92}
\definecolor{Myred}{rgb}{0.8,0.1,0.1}
\numberwithin{equation}{section} 
\tikzstyle{cbvertex}=[draw, circle, blue, thick, fill=white, text=violet, scale=0.6]
\tikzstyle{crvertex}=[draw, circle, red, thick, fill=white, text=violet, scale=0.6]
\tikzstyle{cvvertex}=[draw, circle, Mygreen, thick, fill=Mygreen!20, text=violet, scale=0.6]
\tikzstyle{covertex}=[draw,circle,orange,thick,fill=orange!20,scale=0.6]
\tikzstyle{cnvertex}=[draw, circle, black, thick, fill=white, text=violet, scale=0.6]
\tikzstyle{cnovertex}=[draw, circle, black, thick, fill=orange!20, text=violet, scale=0.6]
\tikzstyle{cbbvertex}=[draw, circle, blue, thick, fill=blue!20, text=violet, scale=0.6]
\tikzstyle{cbrvertex}=[draw, circle, blue, thick, fill=red!20, text=violet, scale=0.6]
\tikzstyle{crbvertex}=[draw, circle, red, thick, fill=blue!20, text=violet, scale=0.6]
\tikzstyle{crrvertex}=[draw, circle, red, thick, fill=red!20, text=violet, scale=0.6]
\tikzstyle{cvrvertex}=[draw, circle, Mygreen, thick, fill=red!20, text=violet, scale=0.6]
\tikzstyle{sbvertex}=[draw, rectangle, blue, thick, fill=white, text=violet, scale=0.8]
\tikzstyle{srvertex}=[draw, rectangle, red, thick, fill=white, text=violet, scale=0.8]
\tikzstyle{snvertex}=[draw, rectangle, black, thick, fill=white, text=violet, scale=0.8]
\titleformat{\section}[block]{\bfseries\fontsize{15}{0.25}\fontfamily{lmss}\selectfont}{\thesection.}{1em}{}[] 
\titlespacing*{\section}{0pt}{*8}{12pt}
\titleformat{\subsection}[runin]{\bfseries\fontfamily{lmss}\selectfont}{\thesubsection.}{0.5em}{}[.] 
\titleformat{\subsubsection}[runin]{\itshape}{\thesubsubsection.}{0.5em}{}[.] 
\newcommand{\maketitre}{
\begin{center}
\rule{\textwidth}{0.8pt} \\[1.5\baselineskip]
{\fontfamily{qpl}\fontsize{18}{10}\selectfont\bfseries Local Weak Limit of Dynamical Inhomogeneous \\[0.5\baselineskip] Random Graphs} \\[\baselineskip] 
{\fontfamily{qpl}\selectfont\Large\itshape by} \\[\baselineskip]
{\Large {\fontfamily{qpl}\selectfont L\'eo Dort} \textit{\&} {\fontfamily{qpl}\selectfont Emmanuel Jacob}} \\[\baselineskip]
\rule{0.35\textwidth}{0.8pt}\\[2\baselineskip]
\end{center}
}
\newtheoremstyle{my_theorem}{1em}{1em}{\itshape}{}{\bfseries\fontfamily{bch}\selectfont}{.}{ }{\thmname{#1}\thmnumber{ #2}\thmnote{ \sffamily\mdseries(#3)}}   
\newtheoremstyle{my_definition}{1em}{1em}{}{}{\bfseries}{.}{ }{\thmname{#1}\thmnumber{ #2}\thmnote{ (#3)}}   
\theoremstyle{my_theorem}
\newtheorem{thm}{Theorem}[section]   
\newtheorem{prop}[thm]{Proposition}   
\newtheorem{lem}[thm]{Lemma}   
\newtheorem{conj}[thm]{Conjecture}   
\theoremstyle{my_definition}
\newtheorem{defi}[thm]{Definition}   
\newtheorem{rem}[thm]{Remark}   
\newcommand{\N}{\ensuremath{\mathbb{N}}}   
\newcommand{\R}{\ensuremath{\mathbb{R}}}   
\newcommand{\Lp}{\ensuremath{\mathbb{L}}}   
\newcommand{\T}{\ensuremath{\mathbb{T}}}   
\newcommand{\ST}{\ensuremath{\mathbb{ST}}}   
\newcommand{\DG}{\ensuremath{\mathcal{DG}}} 
\newcommand{\DGb}{\ensuremath{\mathcal{DG}_\bullet}} 
\newcommand{\DN}{\ensuremath{\mathcal{DN}}} 
\newcommand{\DNb}{\ensuremath{\mathcal{DN}_\bullet}} 
\newcommand{\E}[1]{\ensuremath{\mathbb{E} \left[#1 \right]}}   
\newcommand{\Prob}[1]{\ensuremath{\mathbb{P} \left(#1 \right)}}   
\newcommand{\bE}{\ensuremath{\mathbb{E}}}   
\newcommand{\bP}{\ensuremath{\mathbb{P}}}   
\newcommand{\convprob}{\ensuremath{\stackrel{(\bP)}{\longrightarrow}}}   
\newcommand{\mubin}{\ensuremath{\mu^{bin}}}   
\newcommand{\mupo}{\ensuremath{\mu^{po}}}   
\newcommand{\1}[1]{\ensuremath{\mathds{1}_{\{ #1 \}}}}   
\newcommand{\de}{\mathrm d}
\newcommand{\eps}{\varepsilon}   
\newcommand{\Fo}{\ensuremath{\mathscr{T}}}   
\newcommand{\Ho}{\ensuremath{\mathscr{H}}}   
\newcommand{\bt}{\ensuremath{\mathsf{t}}}   
\newcommand{\bs}{\ensuremath{\mathsf{s}}}   
\newcommand{\br}{\ensuremath{\mathsf{r}}}   
\newcommand{\bfo}{\ensuremath{\mathsf{f}}}   
\newcommand{\Gr}{\ensuremath{\mathsf{Growth}}}   
\newcommand{\Fu}{\ensuremath{\mathsf{Merge}}}   
\newcommand{\Sp}{\ensuremath{\mathsf{Split}}}   
\newcommand{\Tt}{\ensuremath{T}}   
\begin{document}


\maketitre
\thispagestyle{empty}

\begin{abstract}{
We consider dynamical graphs, namely graphs that evolve over time, and investigate a notion of local weak convergence that extends naturally the usual Benjamini-Schramm local weak convergence for static graphs. One of the well-known results of Benjamini-Schramm local weak convergence is that of the inhomogeneous random graph $IRG_n(\kappa)$ on $n$ vertices with connection kernel $\kappa$. When the kernel satisfies the mild technical condition of being a graphical kernel, the $IRG_n(\kappa)$ converges locally in probability to the unimodular multi-type Poisson-Galton-Watson tree $MPGW(\kappa)$, see the book~\cite{Hofstad_volume_2} for a recent detailed exposure of this result. We extend this to dynamical settings, by introducing the \emph{dynamical} inhomogeneous random graph $DIRG_n(\kappa,\beta)$, with connection kernel $\kappa$ and updating kernel $\beta$, and its limit the \emph{growth-and-segmentation} multi-type Poisson-Galton-Watson tree $GSMPGW(\kappa,\beta)$. 
We obtain similarly the local limit of a variation of our model, namely the \emph{vertex updating} inhomogeneous random graph.
Our framework provides a natural tool for the study of processes defined on these graphs, that evolve simultaneously as the graph itself and with local dynamics. We discuss briefly the case of the contact process, where we obtain a slight reinforcement of the results of~\cite{Jacob-Linker-Morters19,Jacob-Linker-Morters22}.
}
\end{abstract}

\newpage

{\fontfamily{lmss}\selectfont\tableofcontents}

\newpage


\section{Introduction}

\subsection{Motivations}
 
 Local weak limit of graphs has been introduced by Benjamini and Schramm in \cite{Benjamini-Schramm} in the context of random walks on planar graphs. It was then studied in a more general context by Aldous and Steele in \cite{Aldous-Steele}, and by Aldous and Lyons in \cite{Aldous-Lyons}.
 It described the fact that a finite graph, when it is seen from a typical vertex, looks like some limiting graph.
 More formally, we say that a sequence of graphs $(G_n)_{n \geq 1}$ with $n$ vertices \textbf{converges locally weakly} towards an infinite rooted random graph $(G,o)$, if for all distance $d \geq 1$, the ball of radius $d$ centered at $o_n$ uniformly chosen in $G_n$ converges in law to the ball of radius $d$ centered at $o$ in $G$. A classical example is the local weak convergence of an Erd\"os-R\'enyi random graph (in the sparse regime) to a \textbf{Poisson-Galton-Watson tree}, which is a random rooted tree where each vertex has independently a random number of children given by a Poisson random variable.
 More than a convergence in law, Dembo and Montanari proved (in the sparse regime) an almost sure convergence of Erd\"os-R\'enyi random graph towards the Poisson-Galton-Watson tree. See \cite[Proposition 2.6]{Dembo-Montanari}.
 
 Local weak convergence has countless applications in the study of large sparse graphs. Let us just mention that they can be used to study
 their number of spanning trees (see \cite[Theorem 3.3]{Lyons}), or their empirical spectral distribution (see \cite[Corollary 12]{Bordenav-Lelarge-Salez}), as well as the asymptotic properties of processes evolving on these graphs, such as the random walk (see \cite{Benjamini-Schramm}) or the contact process (see \cite{Mountford-Valesin-Yao}).
 
In this work, we are interested in this notion of local weak convergence, but in the context of dynamical graphs, namely we consider graph structures that can evolve in time.
The first study of dynamical graphs we are aware of was given in the field of sociology by Holland and Leinhardt in \cite{Holland-Leinhardt} in order to understand the evolution of social networks. A lot of works on dynamical graphs have also been done in physics, under the common denomination of \textit{temporal networks} or \textit{time-varying networks}, and in epidemiology, for modeling spread of information or infection on social networks. 
In the mathematics literature, there has been an increased interest in various models of dynamical graphs in the last ten years or so. 
However in the mathematics literature the simplest and most studied model of dynamical graph is still \textbf{dynamical percolation}, as introduced by H\"aggstr\"om, Peres and Steif in \cite{Haggstrom-Peres-Steif}, which we further discuss in this introduction, in the sparse regime.

Fix $\kappa>0$, and consider initially the Erd\"os-R\'enyi graph $ER(n,\kappa /n)$ with $n$ vertices, where each edge is independently \textbf{open} (or present in the graph) with probability $\kappa/n$, and \textbf{closed} (or absent from the graph) with probability $1-\kappa/n$. 
In dynamical percolation, edges are then refreshed at rate $1$; upon refreshing the edge is declared open with probability $\kappa/n$ and closed with probability $1-\kappa/n$.
We write $G^{n,\kappa}_t$ the graph at time $t$ with $n$ vertices.
The dynamical graph is stationary, with invariant measure the law of the Erd\"os-R\'enyi graph $ER(n,\kappa /n)$. Thus at any time $t\ge 0$, $G^{n,\kappa}_t$ converges locally weakly to the Poisson-Galton-Watson tree $PGW(\kappa)$. It is then natural to ask for a convergence result for the whole process, which leads to the following questions that motivated this work:
\begin{center}
\itshape Does the dynamical percolation on the complete graph, in the sparse regime, converge locally weakly to a dynamical version of the Poisson-Galton-Watson tree? If yes, in which sense precisely? What new can we learn from such a convergence result?
\end{center}
At this point the question is of course imprecise, but we can already easily imagine a ``dynamical Poisson-Galton-Watson tree'', or $DPGW(\kappa)$, that could appear as a limiting process.
At time 0, it is simply a $PGW(\kappa)$. Then, the dynamics is as follows:
\begin{itemize}
	\item each edge is cut at rate 1. Upon cutting an edge, we only keep the part of the tree which contains the root.
	\item each vertex, at rate $\kappa$, gains a new child that is the root of a new $PGW(\kappa)$ tree. 
\end{itemize}
Heuristically, the first kind of transitions simply comes from the fact that in the finite Erd\"os-R\'enyi graph, each edge is refreshed at rate 1 and has only probability $\kappa/n$ to stay present after refreshing, which vanishes to 0 when $n$ tends to infinity. The second kind of transition comes from the fact that the dynamics may reveal a new edge incident to any given vertex. However, this new edge will typically lead to a totally new and still unseen part of the graph, with local structure thus approximately described by an independent $PGW(\kappa)$ tree. It can further be checked that the $DPGW(\kappa)$ is a well-defined stationary Markov process, and at this point the reader may already have some ideas on rigorous ways to state and prove a convergence result of dynamical percolation to this limiting process.

However, we immediately question the choice made above to ``only keep the part of the tree which contains the root'' after cutting an edge. This is the natural thing to do if we are only interested in the local structure of the graph around the root at any given time. But we might also want to keep track of the further evolution of the vertices which were at some time in the past in the close neighbourhood of the root. This is in particular important if we are interested in the spread of some process or information that evolves simultaneously as the network dynamics. This consideration leads naturally to the notion of \textit{space-time paths} (namely paths of vertices that are connected through edges successively present in the dynamical graph), as well as the notions of dynamical connected components and dynamical balls, that we develop in Section \ref{section:LWC-dyn-graphs}. Note we thus develop a richer topology which encodes more information of our dynamical graph, as compared to the initial choice. However, when considering this topology, we will have a slightly different limiting process, namely the ``growth-and-segmentation Poisson-Galton-Watson tree'', or $GSPGW(\kappa)$. It is defined similarly as the $DPGW(\kappa)$, except that instead of cutting an edge and only keeping one part of the tree, we keep both parts and just call the edge segmented. See Section \ref{section:GS-tree} for a precise construction of these growth-and-segmentation trees.
We can now formulate a version of our main result:

\begin{prop}[Theorem \ref{thm:main-result-0} in the context of dynamical percolation]
The dynamical percolation $\left(G^{n,\kappa}_t\right)_{t\ge0}$ converges locally in probability to the $GSPGW(\kappa)$.
\end{prop}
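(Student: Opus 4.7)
The plan is to adapt the classical breadth-first exploration proof of local weak convergence for Erdős-Rényi graphs so that it tracks, in addition to the neighbouring vertices, the full time dynamics of the incident edges. I would define the dynamical ball $B_r^T(o_n, G^{n,\kappa})$ as the sub-dynamical graph spanned by all space-time paths of length at most $r$ starting at $o_n$ and contained in $[0,T]$, together with the full record of when the relevant edges open and close. The goal is then to show that, for any fixed $r$ and $T$, this dynamical ball converges in law to the corresponding dynamical ball at the root of $GSPGW^\kappa$; the upgrade from law to probability is standard once one has joint convergence for two independent roots.

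The first step is to analyse the local edge dynamics. Each potential edge of $G^{n,\kappa}$ evolves as an independent two-state Markov chain: starting from the stationary law $\Ber(\kappa/n)$, a closed edge opens at rate $\kappa/n$ and an open edge closes at rate $1-\kappa/n$. Around any fixed vertex $v$ one therefore observes (i) an initial degree distributed as $\Bin(n-1,\kappa/n) \to \Poi(\kappa)$; (ii) new edges appearing at total rate $(n-1-k_t)\kappa/n \to \kappa$, where $k_t$ is the current degree; and (iii) each current edge closing at rate $1-\kappa/n \to 1$. These rates match exactly the initial offspring distribution $\Poi(\kappa)$, the growth rate $\kappa$ and the segmentation rate $1$ at the root of $GSPGW^\kappa$, which gives convergence of the depth-$1$ dynamical ball.

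The second step is to iterate: explore in breadth-first order, treating each new vertex that enters the ball as a fresh root for its own offspring process. Conditionally on the past of the exploration, the potential edges from a newly explored vertex $w$ to still-unseen vertices evolve as independent copies of the same two-state chain on a population of size $n-\bigO(1)$, so they contribute a further layer of growth-and-segmentation offspring that is asymptotically independent of everything already revealed. The only failure mode is a \emph{collision}, i.e.\ a space-time path rediscovering a previously explored vertex or two sub-explorations discovering the same vertex. Since the dynamical ball of $GSPGW^\kappa$ is almost surely finite for every fixed $r$ and $T$, for each $\varepsilon>0$ one can first reduce to the event that the exploration involves at most $K=K(\varepsilon,r,T,\kappa)$ vertices with probability at least $1-\varepsilon$; a union bound over the at most $\bigO(K^2)$ pairs of potential collision points then gives total collision probability $\bigO(K^2/n) \to 0$. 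This yields convergence in law.

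To upgrade to convergence in probability I would use the now-standard second-moment criterion: it suffices to show that the dynamical balls around two independent uniform roots $o_n, o_n'$ converge jointly in law to two independent copies of $GSPGW^\kappa$. Running the same exploration from both seeds simultaneously, the identical collision estimate shows that the two explorations remain disjoint with probability $1-\bigO(1/n)$, after which the two sub-explorations are unconditionally coupled to independent $GSPGW^\kappa$ by the single-root argument. The principal technical obstacle throughout is precisely the uniform-in-$n$ control of the size of the exploration queue: this is where the sparse regime is essential, since the finiteness of the dynamical ball in the limit hinges on the sub-criticality of the underlying growth-and-segmentation dynamics over any bounded time horizon, and without such an estimate one cannot absorb the collision error.
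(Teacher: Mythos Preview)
Your proposal is essentially correct and follows the same strategy as the paper: couple the dynamical ball in $G^{n,\kappa}$ with that of the limiting tree via Binomial--Poisson approximations, control the failure through a collision bound of order $K^2/n$ once the ball is truncated to size $K$, and upgrade weak convergence to convergence in probability by the two-root second-moment argument (this is exactly the paper's \Cref{lem:joint-local-cv-to-local-in-proba}).

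The organisational difference is that the paper packages the argument as a coupling of two continuous-time Markov chains on the space of segmented trees. It introduces an \emph{intermediate} process sharing the jump rates of the finite graph but the growth law of the limit, and then bounds separately (i) the discrepancy in \emph{transition rates} (segmentation at rate $1-\kappa/n$ versus $1$, growth at rate $(n_{\text{unseen}})\kappa/n$ versus $\kappa$, spurious reconnections at rate $O(1/n)$) via a general Markov-coupling lemma, and (ii) the discrepancy in the \emph{law of a newly revealed subtree} (Binomial versus Poisson offspring) at each growth time. Your single breadth-first exploration interleaves these two sources of error, which is perfectly fine in the homogeneous case but would become awkward in the multitype setting the paper targets; the intermediate-process device buys modularity and yields the quantitative bound $1 - O\big((1+\kappa+\kappa T)^{2d+1}/n\big)$ without extra work.

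One small correction: your closing remark that finiteness of the limiting ball ``hinges on the sub-criticality of the underlying growth-and-segmentation dynamics'' is misleading. The accumulated tree up to time $T$ is a Galton--Watson tree with Poisson$(\kappa(1+T))$ offspring, which is typically \emph{supercritical}; what makes the ball almost surely finite is simply that you truncate at depth $r$. The paper accordingly bounds $\E{\mathbf Z_n}$ and $\E{\mathbf Z_n^2}$ by moments of a depth-$d$ truncated $PGW$ tree, not by any extinction argument.
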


We will of course define precisely the meaning of this ``local in probability'' convergence, but it implies in particular that we can couple the dynamical ball of a randomly chosen vertex in $G^{n,\kappa}$ up to arbitrary time $t$ and distance $d$, with that of the $GSPGW(\kappa)$, so that they coincide with high probability as $n$ tends to infinity. We actually also have the following more quantitative coupling result:

\begin{prop}[Theorem \ref{thm:main-result-2} in the context of dynamical percolation]
We can couple the dynamical balls in $G^{n,\kappa}$ and in $GSPGW(\kappa)$ up to time $t$ and distance $d$, so that they coincide with probability at least
$$ 1- 34\, \frac{(1+\kappa+\kappa t)^{2d+1}}{n} \,. $$
\end{prop}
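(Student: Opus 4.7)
The plan is to construct the coupling via a joint breadth-first exploration of the dynamical ball around a uniformly chosen root $o_n \in [n]$ in $G^{n,\kappa}$ and around the root of the $GSPGW(\kappa)$, driven by a shared source of randomness. When the exploration visits a vertex $v$ at distance $k \le d-1$, I reveal on the graph side, for each potential partner $w$ not yet explored, the initial status of the edge $\{v,w\}$ at time $0$ and the Poisson process of refreshes of that edge during $(0,t]$ together with the outcome of each refresh; on the tree side I attach to $v$ its initial offspring of law $\mathrm{Poi}(\kappa)$ together with a $\mathrm{Poi}(\kappa t)$ family of growth children at uniform times in $(0,t]$, each starting a fresh independent $GSPGW(\kappa)$. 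The two constructions can then be matched vertex-by-vertex in a bijection that respects the time-labels of the newly discovered edges.

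The coupling is designed so that it succeeds except in one of two controlled scenarios. The first is a local Binomial-versus-Poisson discrepancy between the $\mathrm{Bin}(n-1,\kappa/n)$ count of initial neighbours (resp. of refresh-openings in $(0,t]$) on the graph side and the corresponding Poisson counts on the tree side; by Le~Cam's inequality each such event contributes at most $O((\kappa+\kappa t)^2/n)$ per exploration step. The second is a \emph{collision}: the discovery of an edge in the finite graph connecting $v$ to a vertex already visited by the BFS, in which case the tree would spawn an independent fresh subtree and the two structures diverge. By ordering the Poisson and Binomial samples consistently, all other failure modes can be absorbed into these two events.

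To bound the total failure probability, I apply a union bound over the exploration steps and estimate the expected number of collisions. For a vertex $v$ at distance at most $d-1$, its expected number of space-time neighbours in $[0,t]$ is $\kappa(1+t) \leq 1+\kappa+\kappa t$, and conditional on its position in the exploration, each such neighbour has probability at most $|B_d(o_n,t)|/n$ of being a collision. Summing yields
\[
\mathbb{E}\bigl[\,\text{number of collisions}\,\bigr] \;\leq\; \frac{1+\kappa+\kappa t}{n}\;\mathbb{E}\bigl[\,|B_{d-1}(o_n,t)|\cdot|B_d(o_n,t)|\,\bigr].
\]
The branching structure of $GSPGW(\kappa)$ gives $\mathbb{E}[|B_k(o_n,t)|] \leq (1+\kappa+\kappa t)^k$, and through conditional independence of the subtrees attached to each vertex one obtains an analogous bound on the mixed moment. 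Combining this with the Le~Cam contributions and applying Markov's inequality produces a failure probability bounded by a constant times $(1+\kappa+\kappa t)^{2d+1}/n$; a careful accounting of each term yields the explicit constant $34$.

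The main obstacle, in my view, is to control the mixed second moment $\mathbb{E}[|B_{d-1}|\cdot |B_d|]$ rather than its Cauchy-Schwarz majorant, since the two random sets are strongly positively correlated and the naive bound $(\mathbb{E}[|B_d|])^2$ would cost a spurious factor and break the exponent $2d+1$. This forces one to exploit the branching decomposition of the $GSPGW$ recursively, typically through a size-biased or many-to-one argument. Once this branching-process computation is carried out, the remaining pieces, namely the Le~Cam approximations and the union bound over exploration steps, combine routinely to yield the announced bound.
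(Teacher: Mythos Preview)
Your approach is essentially the same as the paper's: a joint exploration of the dynamical ball, binomial--Poisson coupling at each revealed vertex, a collision term when the finite graph produces an edge back into the already-explored set, and a Galton--Watson moment bound to close. The paper organises this slightly differently by introducing an intermediate Markov process $\tilde F^{(n)}$ which has the \emph{transition rates} of the finite-graph ball but uses the Poisson exploration law at each growth event; it then couples $\tilde F^{(n)}$ with the $GSPGW$ via a general lemma on coupling jump processes with close rates, and separately couples $\tilde F^{(n)}$ with the true ball $F^{(n)}$ via the binomial--Poisson step. This two-layer decomposition cleanly separates the temporal error (rate comparison) from the spatial error (law of the newly revealed subtree), but the resulting bound is the same as yours.

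Your discussion of the ``main obstacle'' is off, however. The paper does not need the mixed moment $\E\bigl[|B_{d-1}|\cdot|B_d|\bigr]$ at all: it simply bounds the failure probability by $(1+\kappa+\kappa t)\cdot\bigl(2\,\E[\mathbf Z_n^2]+\E[\mathbf Z_n]\bigr)/n$ with $\mathbf Z_n=|B_d|$, and then uses the elementary Galton--Watson estimates $\E[\mathbf Z_n]\le 2\lambda^d$ and $\E[\mathbf Z_n^2]\le 16\lambda^{2d}$ for $\lambda=1+\kappa+\kappa t\ge 2$, which together give $32\lambda^{2d+1}+2\lambda^{d+1}\le 34\lambda^{2d+1}$. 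Since $|B_{d-1}|\le|B_d|$, your mixed moment is dominated by $\E[|B_d|^2]$ anyway, so there is no spurious factor to avoid and no size-biased or many-to-one argument is needed; the second-moment computation for a supercritical Galton--Watson tree truncated at height $d$ is entirely standard.
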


In this result, we can of course let $t$ or $d$ depend on $n$, and for example the coupling succeeds with high probability as $n$ tends to infinity if $d$ is fixed and $t=t(n)=o\big(n^{\frac 1 {2d+1}}\big)$.

In the remainder of this paper, we do not consider dynamical percolation but a more general model where we allow inhomogeneities in the connection probabilities, encoded by a connection kernel $\kappa$ (as in inhomogeneous random graphs), as well as in the speed at which each individual edge is updated, encoded by an updating kernel $\beta$. We also discuss alternative network dynamics with simultaneous updates of the edges incident to the same vertex. 
Finally, we provide an application of our convergence results to the study of a contact process running on our dynamical graphs.

\subsection{Relation with other Works}

We are unaware of any existing works on the notion of local weak convergence for dynamical graphs, but other convergence notions have already been investigated in the context of dynamical graphs: 
\begin{itemize}[label=\textbullet]
	\item \textit{Graph limits}, which were introduced in \cite{Lovasz-Szegedy}, look at the limiting homomorphism densities of finite subgraphs.
	Some attempts to include dynamic random graphons has been made for instance in \cite{Rath,Crane,Crane2}. In \cite{Crane}, Crane builds a projection of a process of graphs into the space of graph limits, and he shows that some properties pass through to the limiting process such as the Markov property. More recently, in \cite{Athreya-Hollander-Rollin,Braunsteins-Hollander-Mandjes} also studied graphon-valued processes for which they derived convergence in distribution to \textit{graphon-valued diffusions} that are graphon-valued processes whose dynamic is given by some diffusion model. In the latest work, sample paths large deviations principles are also investigated.
	\item \textit{Scaling limits} look at the convergence of graphs from a global point of view.
	In \cite{Garban-Pete-Schramm}, Garban, Pete and Schramm identify the scaling limit of dynamical percolation for critical percolation on the triangular lattice, which can be interpreted as a dynamical percolation on a suitable graph.
	More recently Rossignol proved a scaling limit for the dynamical percolation on critical Erd\"os-R\'enyi random graphs in \cite{Rossignol}, the resulting process is an explicit ``coalescent-fragmentation'' process on a continuous graph.
\end{itemize}

We mentioned an application of our work when considering a contact process on a dynamical graph, where we obtain a slight improvement of \cite{Jacob-Linker-Morters19,Jacob-Linker-Morters22}. However, many other recent works study processes evolving on a dynamical graph, for example \cite{Linker-Remenik,Hilario-Ungaretti-Valesin-Vares}, again in the context or the contact process, or \cite{Peres-Sousi-Steif18,Avena-Guldas-Hofstad-Hollander18,Avena-Guldas-Hofstad-Hollander19,Cai-Sauerwald-Zanetti,Peres-Sousi-Steif20,Hermon-Sousi,Sousi-Thomas,Lellli-Stauffer} which study mixing times and cover times of random walks under different types of dynamics.
The notion of local weak convergence that we developed might arguably be helpful in simplifying some of these works, or strengthening their results.

\subsection{Organisation of the Paper}
 
In Section \ref{section:DIRG}, we introduce the model of interest, namely dynamical inhomogeneous random graphs. 
In Section \ref{section:LWC-dyn-graphs}, we introduce rigorously the local topology of dynamical rooted graphs, including the dynamical connected component of a dynamical rooted graph and the dynamical balls, as well as a variation for marked graphs. We also discuss the subsequent notions of local weak convergence for a deterministic or random sequence of dynamical graphs.
In Section \ref{section:GS-tree}, we introduce the ``growth-and-segmentation multitype Poisson-Galton-Watson'' tree with kernels $\kappa$ and $\beta$, or $GSMPGW(\kappa,\beta)$, which will be the limiting dynamical graph of our sequence of dynamical inhomogeneous random graphs. To this end we first introduce the growth-and-segmentation trees, which are in a sense all the dynamical components of dynamical rooted graphs ``without loops'', in a similar way that trees are graphs without loops.  
In Section \ref{section:main-results}, we formally state our main theorems of local weak convergence. These include a quantitative result stating until which time and which distance the dynamical ball of a dynamical inhomogeneous random graph can be coupled to the corresponding $GSMPGW(\kappa,\beta)$ when you authorize only finitely many types of vertices.
Section \ref{section:proof} contains the technical proofs of these results. 
In Section \ref{section:vertex-updating}, we discuss an adaptation of our results to a different family of dynamically evolving graphs, in which the dynamics is governed by clocks on each vertex, and in which the vertices incident to that vertex are all updated simultaneously.
In Section \ref{section:CP}, we provide a simple application of our theory of local weak convergence, in the context of a process evolving on these dynamical graphs, namely the contact process, improving the results provided in these settings by~\cite{Jacob-Linker-Morters19} and~\cite{Jacob-Linker-Morters22}. 
Finally,
the \hyperref[appendix:coupling-MP]{Appendix} details a technical coupling result for Markov processes used in our proofs.

\subsection*{Notations}

All unspecified limits are as $n\to\infty$. \\
We say that a sequence of events $(A_n)_{n\ge1}$ occurs \emph{with high probability} (\textit{w.h.p.}) if $\Prob{A_n} \to 1$. 
Convergence in probability is denoted by $\convprob$. \\
We denote $\mupo_\lambda$ the Poisson law with parameter $\lambda\ge0$, $\mubin_{k,p}$ the binomial law with parameters $k\in\N$ and $0\le p\le 1$, and $\mu$ for a general probability measure. \\
We reserve the variables $u,v,w$ for vertices, $x,y,z$ for the type of vertices in $S$.
We also denote $t,s,r$ for positive times, and $\bt,\bs,\br$ for (multitype) trees. \\
We also denote $V_n = \{1,\dots,n\}$.


\section{Dynamical Inhomogeneous Random Graphs}\label{section:DIRG}

 The model of \textbf{inhomogeneous random graphs} was introduced in fairly general setting by Bollob\'as, Janson and Riordan in \cite{Bollobas-Janson-Riordan}, and is one of the main object of study of the book of van der Hofstad \cite{Hofstad_volume_2}.
 We enrich this model with a dynamic to define the \textbf{dynamical inhomogeneous random graph} $\big(G^{\mathcal V,n,\kappa,\beta}_t\big)_{t\ge0}$ with vertex set $V_n$.
 Here,
 $$ \mathcal V = \big(S,\mu,(x^{(n)}_1,\dots,x^{(n)}_n)_{n\ge1}\big) \, , $$
 is the \textbf{vertex space}, $(S,d_S)$ is a separable metric space called \textbf{state space} or \textbf{mark space}, and endowed with a Borel probability measure $\mu$, and $x^{(n)}_i\in S$ is the (possibly random) mark associated with vertex $i\in V_n$.  No relationship is assumed between $x^{(n)}_i$ and $x^{(n')}_i$, but to simplify notations we shall write further $(x_1,\dots,x_n) = (x^{(n)}_1,\dots,x^{(n)}_n)$. It is assumed that the empirical measure $\displaystyle \nu_n:=\frac 1 n \sum_i \delta_{x_i}$ converges to the measure $\mu$ in the usual space of probability measures on $S$ (and this convergence holds in probability when the $x_i$ are random). Equivalently,  for any $\mu$-continuity set $A\subseteq S$,
 \begin{equation}\label{eq:nu_n}
 \nu_n(A) := \frac{\#\{ i\in V_n: x_i\in A \}}{n} \convprob \mu(A) \, .
 \end{equation}
 
 From now on, we assume for convenience that $S$ is \textbf{compact} and \textbf{ordered}.
 We will be particularly interested in the case when $S$ is $[0,1]$, endowed with the Lebesgue measure, and $x^{(n)}_i=i/n$, as well as in the case $S$ finite (as introduced and studied by S\"oderberg in \cite{Soderberg,Soderberg2003a,Soderberg2003b,Soderberg2003c}).

 Given the sequence $x_1,\dots,x_n$, $\big(G^{\mathcal V,n,\kappa,\beta}_t\big)_{t\ge0}$ is the random dynamical graph on $V_n$, such that any two vertices $u$ and $v$ are initially connected by an edge independently of the others and with a probability
 $$ p_{u,v} = \frac{1}{n} \kappa(x_u,x_v) \wedge 1 \,, $$
 and the connection is updated at rate $\beta_{u,v} = \beta(x_u,x_v)$.
 That is, after an exponential time with expectation $1/\beta_{uv}$, the vertices are again connected with probability $p_{uv}$, independently of the past and independently of the other edges. See Fig. \ref{fig:edge_updating}.
 
 \begin{figure}[!ht]
 \centering
 \begin{tikzpicture}[scale=1.1]
 \path[thick] ({360/10 * (1 - 1)}:1.75) edge [bend right] ({360/10 * (2 - 1)}:1.75);
 \draw[thick] ({360/10 * (1 - 1)}:1.75) -- ({360/10 * (3 - 1)}:1.75);
 \draw[thick] ({360/10 * (3 - 1)}:1.75) -- ({360/10 * (4 - 1)}:1.75);
 \draw[thick] ({360/10 * (3 - 1)}:1.75) -- ({360/10 * (7 - 1)}:1.75);
 \draw[thick] ({360/10 * (3 - 1)}:1.75) -- ({360/10 * (8 - 1)}:1.75);
 \path[thick] ({360/10 * (6 - 1)}:1.75) edge [bend left] ({360/10 * (9 - 1)}:1.75);
 \path[thick] ({360/10 * (7 - 1)}:1.75) edge [bend right] ({360/10 * (8 - 1)}:1.75);
 \path[thick] ({360/10 * (9 - 1)}:1.75) edge [bend right] ({360/10 * (10 - 1)}:1.75);
 \draw[red,thick,dotted] ({360/10 * (5 - 1)}:1.75) -- ({360/10 * (10 - 1)}:1.75);
 \foreach \s in {1,...,10}
 { \draw [black, thick, fill=white] ({360/10 * (\s - 1)}:1.75) circle (2.5pt); }
 \draw [red, thick, fill=red!20] ({360/10 * (5 - 1)}:1.75) circle (2.5pt);
 \draw [red, thick, fill=red!20] ({360/10 * (10 - 1)}:1.75) circle (2.5pt);
 
 \node [] at (3.5,0) {\huge $\longrightarrow$};
 
 \begin{scope}[xshift=7cm]
 \path[thick] ({360/10 * (1 - 1)}:1.75) edge [bend right] ({360/10 * (2 - 1)}:1.75);
 \draw[thick] ({360/10 * (1 - 1)}:1.75) -- ({360/10 * (3 - 1)}:1.75);
 \draw[thick] ({360/10 * (3 - 1)}:1.75) -- ({360/10 * (4 - 1)}:1.75);
 \draw[thick] ({360/10 * (3 - 1)}:1.75) -- ({360/10 * (7 - 1)}:1.75);
 \draw[thick] ({360/10 * (3 - 1)}:1.75) -- ({360/10 * (8 - 1)}:1.75);
 \path[thick] ({360/10 * (6 - 1)}:1.75) edge [bend left] ({360/10 * (9 - 1)}:1.75);
 \path[thick] ({360/10 * (7 - 1)}:1.75) edge [bend right] ({360/10 * (8 - 1)}:1.75);
 \path[thick] ({360/10 * (9 - 1)}:1.75) edge [bend right] ({360/10 * (10 - 1)}:1.75);
 \draw[red,thick] ({360/10 * (5 - 1)}:1.75) -- ({360/10 * (10 - 1)}:1.75);
 \foreach \s in {1,...,10}
 { \draw [black, thick, fill=white] ({360/10 * (\s - 1)}:1.75) circle (2.5pt); }
 \draw [red, thick, fill=red!20] ({360/10 * (5 - 1)}:1.75) circle (2.5pt);
 \draw [red, thick, fill=red!20] ({360/10 * (10 - 1)}:1.75) circle (2.5pt);
 \end{scope}
 \end{tikzpicture}
 \caption{One move of the dynamic of the dynamical inhomogeneous random graph. The red dotted edge which is absent on the left is the one chosen to be updated.}\label{fig:edge_updating}
 \end{figure}
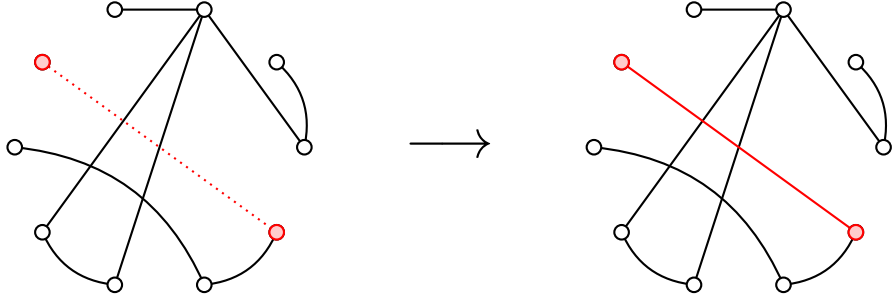

 Here $\kappa:S\times S\to[0,\infty)$ and $\beta:S\times S\to[0,\infty)$ are symmetric non-negative measurable functions on $S\times S$ called the \textbf{connection kernel} and the \textbf{updating kernel} respectively. We will often drop the vertex space from the notation and write $G^{n,\kappa,\beta}$. Let us note that:
 \begin{itemize}
 \item The dynamical networks we consider are \emph{stationary}. At time $t=0$ and at later times, the law of the network $G^{n,\kappa,\beta}_t$ is that of the inhomogeneous random graph with vertex space $\mathcal V$ and kernel $\kappa$, where any two vertices are connected independently with probability $p_{uv}$.
 \item Each edge evolves independently according to a $2$-state Markov chain, where the edge $uv$ turns open at rate $\beta_{uv} p_{uv}$ and turns closed at rate $\beta_{uv} (1-p_{uv})$. If we consider another dynamical inhomogeneous random graph with the same vertex space but different kernels $\kappa'\le\kappa$ and $\beta'\le\beta$, there is a natural coupling for the evolution of the edges in the two dynamical graphs, where the updating times for $G^{n,\kappa',\beta'}$ are included in those for $G^{n,\kappa,\beta}$, and, initially or when the two graphs update simultaneously, the updating edge is open in $G^{n,\kappa,\beta}$ if it is in $G^{n,\kappa',\beta'}$. Using this coupling, we can see that the evolution of the edge $uv$ coincide in the two graphs on the whole time interval $[0,T]$ with probability at least
 \begin{equation}\label{eq:approximation-of-kernel}
 1-(p_{uv}-p'_{uv}) - (\beta_{uv} p_{uv} - \beta'_{uv} p'_{uv}) T.
 \end{equation}
 \end{itemize}
 
 We will of course be interested in the study of the dynamical inhomogeneous random graph in the limit $n\to\infty$.
 We will always assume that the kernels $\kappa$ and $\beta$ are \emph{graphical} as in the following definition.
 
 \begin{defi}
 Let $\mathcal V =(S,\mu,(x_1,\dots,x_n)_{n\ge1})$ be a vertex space.
 The kernels $\kappa$ and $\beta$ are called \textbf{graphical} on $\mathcal V$ if
 \begin{enumerate}
 \item\label{def:graphical-i} $\kappa$ and $\beta$ are continuous \textit{a.e.} on $S\times S$;
 \item\label{def:graphical-ii} $\kappa\in\Lp^1(S\times S,\mu\otimes\mu)$ and $\kappa \beta \in\Lp^1(S\times S,\mu\otimes\mu)$;
 \item\label{def:graphical-iii} The kernel $\kappa$ satisfies:
 \begin{equation}\label{eq:graphical}
 \frac{1}{n^2} \sum_{1\le u<v\le n} \kappa(x_u,x_v) \wedge n \convprob \frac{1}{2} \int\int \kappa(x,y) \, \mu(\de x)\mu(\de y) \, 
 \end{equation}
 \item\label{def:graphical-iv} The kernels $\kappa$ and $\beta$ satisfy:
 \begin{equation}\label{eq:graphical-2}
 \frac{1}{n^2} \sum_{1\le u<v\le n} \beta(x_u,x_v) \left(\kappa(x_u,x_v) \wedge n\right) \convprob \frac{1}{2} \int\int \beta(x,y)\kappa(x,y) \, \mu(\de x)\mu(\de y) \, .
 \end{equation}
 \end{enumerate}
 \end{defi}
 
 Note that this is a slight adaptation of the definition of a graphical kernel in \cite{Bollobas-Janson-Riordan} in the context of dynamical graphs with the additional updating kernel $\beta$.
 Condition \ref{def:graphical-iii} implies that the number of edges is asymptotically proportional to $n$, with proportionality constant being precisely $\frac{1}{2} \int\int \kappa(x,y) \mu(\de x)\mu(\de y)$.
 Condition \ref{def:graphical-iv} implies a similar result when considering all the edges present in the network in a finite time interval $[0,T]$.
 This means in particular that the model is \textbf{sparse}.

 \begin{rem}
 \begin{enumerate}
 \item The definition of a graphical kernel in~\cite{Bollobas-Janson-Riordan} does not request~\eqref{eq:graphical} but the weaker assumption that the expectation of the LHS converges to the RHS, however they also show that the two are actually equivalent. We prefer the maybe more meaningful definition with a convergence in probability. The reader will not lose much if he assumes for simplicity that the $x_i$ are deterministic and if he replaces the convergence in probability by a simple convergence in~\eqref{eq:nu_n}, \eqref{eq:graphical} and~\eqref{eq:graphical-2}.
 \item For convenience we supposed that the kernels $\kappa$ and $\beta$ do not depend on $n$. However, our results generalize easily to the case where $\kappa$ and $\beta$ can depend on $n$ and be \emph{graphical with limits $\kappa$ and $\beta$}, in the sense that we have $\kappa_n(x,y)\to\kappa(x,y)$ and $\beta_n(x,y)\to\beta(x,y)$ for \textit{a.e.} $(x,y)\in S\times S$, as well as an adaptation of \eqref{eq:graphical} and \eqref{eq:graphical-2} with of course $\kappa_n$ and $\beta_n$ in the LHS.
 
 \item When $S=\{1,\dots,r\}$ is finite, the vertices can be classified according to $r$ different types, and we say $\kappa$ and $\beta$ are \textbf{finite-type kernels}. Our strategy in the following will be to prove local convergence results for the dynamical inhomogeneous random graphs, by first approximating the kernels by finite-type ones (or regular finitary kernels to be more precise, see later).
 \end{enumerate}
 \end{rem}


\section{Local Convergence for Dynamical Graphs}\label{section:LWC-dyn-graphs}

In this part we introduce the space of (possibly marked) dynamical graphs that we consider, and the associated notions of local convergence and local weak convergence. These are similar to the notions of local convergence and Benjamini-Schramm local weak convergence for static graphs, of course adapted to our dynamical settings.

\subsection{Dynamical Graphs and Networks}

 A \textbf{dynamical rooted graph} is a process of rooted graphs $(G_t,o)_{t\ge0} = ((V_t,E_t),o)_{t\ge0}$ depending on time $t\ge0$, where the set of vertices $V_t$ always contains a specific vertex, called the \textbf{root} $o$, and is included in a vertex set $V$. We sometimes simply write $G$ or $(G,o)$ for the dynamical graph $(G_t,o)_{t\ge 0}$. We further always assume that our dynamical graph:
 \begin{itemize}
 \item has \textbf{nondecreasing vertex set} $V_t$. Each vertex $v\in V_t$ then has a \textbf{birth-time} $b(v)$ equal to the infimum value of $\{s: v\in V_s\}$, and is smaller than or equal to $t$. We always assume that $v$ belongs to $V_{b(v)}$, which can be seen as a right-continuity property.\\
 We also write $(V_t,E_{\le t}) := (V_t,\cup_{s\le t} E_s)$ the \textbf{accumulated graph} at time $t$, and
 $(V_\infty,E_{<\infty}) := (\cup_{t\ge 0}V_t,\cup_{t\ge 0} E_t)$ the total accumulated graph. We say we have a \textbf{growing graph} if the graph and the accumulated graph coincide for all finite $t$. 
 \item is \textbf{locally finite}, in the sense that for any finite $t\ge 0$, the accumulated graph $(V_t,E_{\le t})$ is locally finite.
 \end{itemize}
 
 When looking at navigability along a dynamical graph, or in the study of processes living on this graph, we have a natural notion of space-time paths along the graph, as well as space-time ``distances'' from the root.
 
 \begin{defi}
 A dynamical rooted graph $(G_t,o)_{t\ge0}$ is called \textbf{connected} if for all $t\ge 0$ and $v$ in $V_t$, there exists $k\ge 0$, a path $o=v_0,\ldots, v_k=v$ in the graph $(V_t,E_{\le t})$ and times $0\le t_1\le\ldots \le t_k\le t$ such that for all $i=1,\ldots, k$, we have $v_{i-1}v_i \in E_{t_i}$. \\
 We write $\overset{\rightarrow}{\de}_t(v)$ for the smallest possible value for $k$ above (which can be infinite if the dynamical graph is not connected).
 \end{defi}
 
 Note that a dynamically connected graph $(G_t,o)_{t\ge0}$ can fail to be a connected graph for fixed $t>0$.
 Note also that the function $t\mapsto \overset{\rightarrow}{\de}_t(v)$, defined on $[b(v),+\infty)$, is nonincreasing, and takes finite values if the dynamical graph is connected. We further define the dynamical component of the root, as well as the dynamical balls centered at the root.
 
 \begin{defi}
 Let $(G_t,o)_{t\ge0}$ be a dynamical rooted graph on a vertex set $V$.
 \begin{enumerate}[label=(\alph*)]
 \item The \textbf{dynamical component} of the root $o$ in $(G_t)_{t\ge0}$ is the largest connected dynamical subgraph $(G^{[\infty]}_t,o)_{t\ge0} = ((V^{[\infty]}_t(G),E^{[\infty]}_t(G)),o)_{t\ge0}$, characterized for $t\ge0$ by
 $$ V^{[\infty]}_t(G) = \left\{ v\in V_t: \: \overset{\rightarrow}{\de}_t(v)<+\infty \right\} $$
 and
 $$ E^{[\infty]}_t(G) = \left\{ u\sim v\in E_t: \: u,v\in V^{[\infty]}_t(G) \right\} \,. $$
 \item The \textbf{dynamical ball of radius} $d$ centered in the root $o$ in $(G_t)_{t\ge0}$ is the dynamical rooted graph $(G^{[d]}_t,o)_{t\ge0} = ((V^{[d]}_t(G),E^{[d]}_t(G)),o)_{t\ge0}$, where for all $t\ge0$,
 $$ V^{[d]}_t(G) = \left\{ u\in V_t: \: \overset{\rightarrow}{\de}_t(u)\le d \right\} $$
 and
 $$ E^{[d]}_t(G) = \left\{ u\sim v\in E_t: \: u,v\in V^{[d]}_t(G) \right\} \,. $$
 \end{enumerate}
 \end{defi}
 
 Let us stress again that in the dynamical notion of connectivity that we have introduced, the graphs $G^{[\infty]}_t$ and $G^{[d]}_t$ can fail to be connected at a fixed time $t$.
 
 When there is no confusion, we write in the following $(G,o)$ for the dynamical graph $(G_t,o)_{t\ge0}$.

 It is common to identify two graphs that are isomorphic. In our dynamical settings, we say two dynamical rooted graphs $(G^1,o_1)$ and $(G^2,o_2)$ are \textbf{isomorphic}, and we also write $(G^1,o_1) \equiv (G^2,o_2)$, if there exists a bijective map $\gamma:V_\infty(G^1) \to V_\infty(G^2)$ such that $\gamma((G^1,o_1))=(G^2,o_2)$, where $\gamma((G^1,o_1))$ is by definition the dynamical rooted graph
 $$ \gamma \big( (G^1,o_1) \big) := \big( (\gamma(V^1_t), \gamma(E^1_t)),\gamma(o_1) \big)_{t\ge 0} \,. $$ 
 In other words the bijective map $\gamma$ preserves:
 \begin{itemize}
 \item the root, that is $\gamma(o_1) = o_2$ ; and
 \item the dynamic of edges, that is for all time $t\ge0$, $\gamma(V^1_t) = V^2_t$ and $\{u,v\}\in E_t(G^1)$ precisely when $\{\gamma(u),\gamma(v)\}\in E_t(G^2)$.
 \end{itemize}
 It is not difficult to show that $\equiv$ is an equivalence relation on the set of connected dynamical rooted locally finite graphs with nondecreasing vertex set.
 
 \begin{defi}
 We write $\DGb$ for the space of isomorphism classes of connected dynamical rooted locally finite graphs, with nondecreasing vertex set.
 \end{defi}
 
 \begin{rem}
 Note that the map $\gamma$ is not allowed to depend on $t$, and as such, an element of $\DGb$ cannot be identified with a function on the space of isomorphic classes of connected (\textit{static}) rooted locally finite graphs. A way to by pass through this difficulty would be:
 \begin{itemize}
 \item to consider the edges in 
 $$ \left(\cup_{s\le t} E^{[\infty]}_s\right)\backslash E_t $$
 as ``segmented edges'', so that for fixed $t$, the graph $\left(V^{[\infty]}_t, \cup_{s\le t} E^{[\infty]}_s\right)$ containing also the segmented edges is now connected.
 \item and then to identify unambiguously every single edge by defining a cyclic ordering of the neighbours of any given vertex, which would further respect the order of appearance of the edges in the graph (given that the edges incident to a given vertex in the dynamical component is now a growing set when we also consider the segmented edges).
 \end{itemize}
 We refrain ourselves to do so at this point, for the two reasons that this construction is not that natural for a given dynamical graph (in particular the requested ordering of the edges incident to each vertex), and that the space $\DGb$ is  actually a perfectly nice working space to define a local topology, as we do in the next section.\\
 However, our main theorem involves the Growth-and-Segmentation Multitype Poisson-Galton-Watson tree, which is best constructed as a Markov process on the space of ordered segmented trees. Of course, this process can then also be seen as a random element of $\DGb$.
 \end{rem}
 
 We will also consider dynamical graphs with the additional structure of a mark associated to each vertex, living in a \textbf{mark space} $S$, which we recall is assumed to be compact and ordered\footnote{In this section, it would actually suffice to suppose $S$ is a Polish space. The order on $S$ will be important when defining the growth-and-segmentation Poisson-Galton-Watson tree in next section, while the compactness of $S$ will be used in the approximation argument of general kernels by regular finitary kernels in Section \ref{section:proof}.}.
 
 A \textbf{dynamical marked graph}, or \textbf{dynamical network}, is then $(G,m)$, where $G$ is a dynamical graph and $m$ the mark function which belongs to $S^{V_\infty(G)}$.
 Two rooted dynamical networks $(G^1,o_1, m_1)$ and $(G^2,o_2,m_2)$ are called isomorphic if there is a bijective map $\gamma:V^1_\infty \to V^2_\infty$ such that $\gamma(G^1,o_1,m_1)= (G^2,o_2,m_2)$, where now
 $$ \gamma(G^1,o_1,m_1)=(\gamma(G^1),\gamma(o_1), m_1\circ \gamma^{-1}) \, . $$
 In other words, two dynamical networks are isomorphic if the dynamical graphs are isomorphic, with associated map $\gamma$ preserving the marks of the vertices.
 
 \begin{defi}
 We write $\DNb^S$ for the space of isomorphism classes of connected dynamical rooted locally finite networks, with nondecreasing vertex set and marks in space $S$. We also write $\DNb$ when the mark space is implicit.
 \end{defi}

\subsection{Local Convergence}

 We now endow the spaces $\DGb$ and  $\DNb^S$ with the topology of \textbf{local convergence}. We should again note the similarity with the usual notion of local convergence for rooted (static) graph. We should also note that we take into account the time dynamics in a way which is similar to the notion of uniform convergence on compact sets, but with the added subtlety we already mentioned that an element of  $\DGb$ cannot be assimilated to a function on the space of rooted graphs.
 
 \begin{defi} \label{def:deterministic-loc-conv}
 We say a sequence of dynamical rooted graphs $\{(G^n_t,o_n)_{t\ge0},\, n\ge1\}$ \textbf{converges locally to} $(G^\infty_t,o_\infty)_{t\ge0}$ \textbf{in} $\DGb$ if for all $d\ge 1$ and $T<+\infty$, for $n$ large enough,
 $$ (G^{n,[d]}_t, o_n)_{t\le T} \equiv (G^{\infty,[d]}_t, o_\infty)_{t\le T} \,. $$
 We say a sequence of dynamical networks $(G^n, m_n)$ \textbf{converges locally to} $(G^\infty,m_\infty)$ \textbf{in} $\DNb^S$ if for all $d\ge 1$, for all $T<+\infty$ and $\delta>0$, for $n$ large enough, there is a bijective map 
 $\gamma_n: V^{n,[d]}_T \to V^{\infty,[d]}_T$ such that  
 $$ \left\{
 \begin{array}{l}
 \gamma_n\left((G^{n,[d]}_t, o_n)_{t\le T} \right) =(G^{\infty,[d]}_t, o_\infty)_{t\le T},
 \\
 \forall v\in V^{n,[d]}_T, \,d_S(m_\infty(\gamma_n(v)),m_n(v))\le \delta \,.
 \end{array}
 \right. $$
 \end{defi}
 
  In words, we ask the dynamical graphs to be  isomorphic when restricted to graph distance $d$ and time $T$, and in the case of dynamical networks we further request the associated map to leave the marks at distance less than $\delta$ from each other.
 
 \begin{prop}\label{prop:completness-of-DG}
 The topology of local convergence on $\DGb$ is metrizable by the distance $D_\bullet$ defined as
 $$ D_\bullet\left((G^1,o_1),(G^2,o_2)\right)
 = \sum_{k=0}^\infty \frac{1}{2^k} D_k\left((G^1,o_1),(G^2,o_2)\right) \,, $$
 where
 $$ D_k\left((G^1,o_1),(G^2,o_2)\right)
 = \left( 1+\sup\left\{ d\ge1, \: (G^{1,[d]}_t, o_1)_{0\le t\le k}\equiv(G^{2,[d]}_t, o_2)_{0\le t\le k} \right\} \right)^{-1} \,. $$
 Moreover, $(\DGb,D_\bullet)$ is a complete ultrametric space.
 
The topology of local convergence on $\DNb^S$ can be defined by the distance $D_\bullet^S$ defined as
 $$ D_\bullet^S\left((G^1,o_1,m_1),(G^2,o_2,m_2)\right)
 = \sum_{k=0}^\infty \frac{1}{2^k} D_k^S\left((G^1,o_1,m_1),(G^2,o_2,m_2)\right) \,, $$
 where $D_k$ is defined by
 $$ D_k^S\left((G^1,o_1,m_1),(G^2,o_2,m_2)\right)
 = \frac{1}{1+R_k^S} \,, $$
 and $R_k^S$ by
 $$ R_k^S=\sup\left\{ d\ge1 : \exists \gamma :
 \begin{array}{l}
 \gamma\left( (G^{1,[d]}_t, o_1)_{0\le t\le k}\right)=(G^{2,[d]}_t, o_2)_{0\le t\le k} ,\\
 \forall v\in V^{1,[d]}_k, d_S(m_2(\gamma(v)),m_1(v))\le 1/d.
 \end{array}
 \right\} \,. $$
 \end{prop}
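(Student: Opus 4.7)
Four facts need to be established: (i) $D_\bullet$ is a well-defined metric, (ii) its topology matches local convergence, (iii) it satisfies the ultrametric inequality, and (iv) $(\DGb,D_\bullet)$ is complete. The corresponding statement for $D_\bullet^S$ reduces to (i) and (ii). My plan is to obtain (i)--(iii) from the behaviour of the individual $D_k$ and to handle (iv) by a projective gluing argument. The principal obstacle is (iv).

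\emph{Metric, topology, ultrametric.} Convergence of the series $D_\bullet=\sum 2^{-k}D_k$ is immediate from $D_k\in[0,1]$. Symmetry and $D_\bullet(x,x)=0$ are direct. The separation axiom $D_\bullet(x,y)=0\Rightarrow x\equiv y$ requires extracting a single isomorphism from the family of $\gamma_{k,d}$ witnessing isomorphism on all truncated balls; local finiteness allows this by a K\"onig-type diagonal extraction. The key structural property is the monotonicity $D_k\le D_{k+1}$ in $k$ (lengthening the time interval strengthens the isomorphism requirement), from which $D_\bullet(G^n,G^\infty)\to 0 \Leftrightarrow D_k(G^n,G^\infty)\to 0$ for every $k$, and this exactly matches the definition of local convergence (with $T$ restricted to integers without loss). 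Each $D_k$ is an ultrametric because composing witnesses $\gamma_1,\gamma_2$ of iso at $(d_1,k)$ and $(d_2,k)$ yields iso at $(\min(d_1,d_2),k)$. To transfer the ultrametric property to $D_\bullet$ itself, I would exploit the monotonicity in $k$ together with the geometric weights; should the direct estimate on the sum prove technically delicate, the metric $\tilde D(x,y)=\sup_k 2^{-k}D_k(x,y)$ is manifestly ultrametric (a supremum of ultrametrics) and defines the same topology, so it can be substituted whenever only topological content is used.

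\emph{Completeness.} Let $(G^n)$ be Cauchy. By the topological characterization above, for every $(k,d)$ there is $N(k,d)$ such that the truncated balls $(G^{n,[d]}_t,o_n)_{t\le k}$ with $n\ge N(k,d)$ share a common isomorphism class $H_{k,d}$. These classes form a coherent projective family: for $(k',d')\le (k,d)$, $H_{k',d'}$ is the appropriate restriction of $H_{k,d}$. I would construct $G^\infty$ by a diagonal induction: choose $(k_j,d_j)\to(\infty,\infty)$, fix a concrete labelled representative of $H_{k_1,d_1}$, and at step $j$ extend the current representative along a compatible bijection into a representative of $H_{k_j,d_j}$. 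The resulting $(G^\infty_t,o_\infty)_{t\ge0}$ is locally finite, dynamically connected and has nondecreasing vertex set, hence lies in $\DGb$, and its truncated balls coincide with the $H_{k,d}$, so $D_\bullet(G^n,G^\infty)\to 0$. The hard part is exactly this coherent choice of representatives and extension bijections: since each $H_{k,d}$ is only an isomorphism class, compatibility across the inductive steps must be enforced by hand, in the spirit of a Fra\"iss\'e limit. Local finiteness reduces the task to countably many vertices, so a diagonal enumeration suffices.

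\emph{Network case.} For $D_\bullet^S$, the same strategy applies after tracking the mark tolerance: composing bijections accurate on marks up to $1/d_1$ and $1/d_2$ yields one accurate up to $1/d_1+1/d_2\le 2/\min(d_1,d_2)$, a loss absorbed by the $1/d$ term in $D_k^S$ (at the cost of harmless constants). The monotonicity in $k$ carries over, and compactness of $S$ ensures that the $1/d$ tolerance on marks becomes effective as $d\to\infty$, so that the induced topology again matches the local convergence of networks of Definition \ref{def:deterministic-loc-conv}.
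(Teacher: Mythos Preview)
The paper does not give a proof of this proposition at all: it simply states that ``it essentially follows from a careful rereading of \cite[Appendix~A]{Hofstad_volume_2}''. Your sketch is therefore considerably more detailed than the paper's own treatment, and for the metric axioms, the identification of the induced topology with local convergence, and the completeness argument via a projective/K\"onig gluing, your outline is sound and matches what an adaptation of the static case would look like.

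Your caution about the ultrametric property is well placed, and in fact the point deserves to be stated more sharply: a weighted sum of ultrametrics is \emph{not} in general an ultrametric, and the monotonicity $D_k\le D_{k+1}$ does not rescue this. One can realise a concrete counterexample inside $\DGb$: take $a$ a static edge (root with one neighbour), $b$ a static path of length~$2$, and $c$ equal to $b$ at time~$0$ but with the root acquiring an additional neighbour at some time in $(0,1]$. Then $D_0(a,b)=D_0(a,c)=\tfrac12$, $D_0(b,c)=0$, while $D_k(a,b)=\tfrac12$ and $D_k(b,c)=D_k(a,c)=1$ for all $k\ge1$, giving $D_\bullet(a,b)=D_\bullet(b,c)=1$ but $D_\bullet(a,c)=\tfrac32$. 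So the ultrametric assertion for $D_\bullet$ itself, as literally stated in the proposition, appears to be a minor slip; your fallback metric $\tilde D=\sup_k 2^{-k}D_k$ is a genuine ultrametric that is topologically equivalent, and since only the topological content of the proposition is used elsewhere in the paper, nothing downstream is affected. It would be cleaner to say this explicitly rather than to suggest that monotonicity plus geometric weights might yield the ultrametric inequality for $D_\bullet$.
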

 
 We omit the proof of this proposition, as it essentially follows from a careful rereading of \cite[Appendix A]{Hofstad_volume_2}. 
 Note that the metric spaces $(\DGb,D_\bullet)$ and $(\DNb^S,D_\bullet)$ fail to be separable, for the same reason that the space of real-valued functions endowed with the topology of uniform convergence on compact sets fails to be separable, and still fails to be separable when restricted to continuous or c\`adl\`ag functions. 
 
 It is natural to ask whether we also have a notion of local Skorokhod topology, which would be similar to the $J1$-Skorokhod topology and define a Polish space. It is indeed the case, as we now explain in the context of unmarked graphs.
 
 Consider $\mathcal{CDG}_\bullet\subset\mathcal{DG}_\bullet$ the subset of \textit{c\`adl\`ag} rooted graphs, in the sense that for every finite time $t$ and distance $d$, the process $(G^{[d]}_s,o)_{0\le s\le t}$ has finitely many jumps and is right continuous. 
 For finite time $T$, define $D_{SK_T}((G^1,o_1),(G^2,o_2))$ as the infimum of all those values of $\delta\in(0,1]$ for which there exists $(\tilde G^1,\tilde o_1)\equiv(G^1,o_1)$, $(\tilde G^2,\tilde o_2)\equiv(G^2,o_2)$ and a grid $0=s_0<\dots<s_k$, with $s_k\ge T$, and $0=t_0<\dots<t_k$ with $t_k\ge T$, such that $|t_i-s_i|\le\delta$ for $i=0,\dots,k$ and with $d=\lceil1/\delta-1\rceil$,
 $$ (\tilde G^{1,[d]}_s,o_1) = (\tilde G^{2,[d]}_t,o_2) \qquad \textrm{for all } i=0,\dots,k-1 \textrm{ and } s_i\le s<s_{i+1} \textrm{ and } t_i\le t<t_{i+1} \,. $$
 Then the \textbf{local Skorokhod topology} is defined by the metric
 $$ D_{Sk}\big((G^1,o_1),(G^2,o_2)\big) := \sum_{k=1}^\infty \frac{1}{2^k} D_{Sk_k}\big((G^1,o_1),(G^2,o_2))\big) \,, $$
 and makes $(\mathcal{CDG}_\bullet, D_{Sk})$ a Polish space. This space would be another natural working space, however we obtain our results directly for the finer local topology, and for this reason we do not work further with the local Skorokhod topology.

\subsection{Local Weak Convergence for Dynamical Graphs}

 In this section, we define an analog of Benjamini-Schramm convergence, or local weak convergence, in the context of dynamical graphs. We could define local weak convergence on either spaces $\DG_\bullet$ for unmarked graphs, or $\DN^S_\bullet$ for marked graphs, but we work here directly with marked graphs.
 Consider a sequence of dynamical graphs $\{G^n,\, n\ge1\}$ with marks in $S$ and defined on a finite vertex set $V_n$, with size tending to infinity. We first consider the case of a \textbf{deterministic} such sequence. For each $n$, we then obtain a random element of $\DNb^S$ by first selecting a root $\mathbf o_n$ uniformly at random on $V_n$, and considering only $(G^{n,[\infty]},\mathbf o_n)$ the dynamical component containing $\mathbf o_n$. To enlighten the notation, we use below a slight abuse of notation and write $(G^n,\mathbf o_n)$ for  $(G^{n,[\infty]}, \mathbf o_n)$, knowing that we always work on $\DNb^S$.
 
 \begin{defi}\label{def:deterministic-rand-loc-conv}
 Given a deterministic sequence $\{ G^n,\, n\ge1 \}$ of marked dynamical graphs on $V_n$, we say that $\{ G^n,\, n\ge1 \}$ \textbf{converges locally weakly on} $\DNb^S$ to a random dynamical rooted graph $(\mathbf G^\infty,\mathbf o)$, if the sequence $\{ (G^n,\mathbf o_n), \, n\ge1 \}$, where $\mathbf o_n$ is a uniform vertex of $V_n$, converges in distribution to $(\mathbf G^\infty,\mathbf o)$ \textit{w.r.t.} the local distance $D_\bullet^S$.
 \end{defi}
 
 Note that even though $G^n$ is here deterministic, we obtain a (nondeterministic) random variable $(G^n,\mathbf o_n)$ by the random choice of the root $\mathbf o_n\in V_n$.
 An equivalent definition is the weak convergence of the probability measures
 $$ \frac{1}{n} \sum_{v\in V_n} \delta_{(G^n,v)} $$
 to the law $\nu$ of $(\mathbf G^\infty,\mathbf o)$, as probability measure on the metric space $(\mathcal{DN}_\bullet^S,D_\bullet^S)$, which can also be restated as the convergence
 $$ \frac{1}{n} \sum_{v\in V_n} h( G^n,v ) \to \E{h( \mathbf G^\infty,\mathbf o )} $$
 for every bounded and continuous function $h:\DNb^S\to\R$.
 
 Having discussed the notion of local weak convergence for deterministic graphs, we now move on to sequences of \textbf{random} dynamical graphs.
 
 \begin{defi}\label{def:weak-loc-conv}
 Given a random sequence $\{ \mathbf G^n,\, n\ge1 \}$ of marked dynamical graphs on $V_n$, we say that
 \begin{enumerate}[label=(\alph*)]
 \item\label{def:conv-loc-weakly-random} $\{ \mathbf G^n, n\ge1\}$ \textbf{converges locally weakly} to $(\mathbf G^\infty,\mathbf o)$ if $\{ (\mathbf G^n,\mathbf o_n), n\ge1 \}$ converges in distribution to $(\mathbf G^\infty,\mathbf o)$. \\
 Equivalently, for every bounded and continuous function $h:\mathcal{DN}_\bullet^S\to\R$:
 $$ \E{h\big( (\mathbf G^n,\mathbf o_n) \big)} = \E{\frac{1}{n} \sum_{v\in V_n} h\big( (\mathbf G^n,v) \big)} \longrightarrow \E{h\big( (\mathbf G^\infty,\mathbf o) \big)} \, . $$
 \item\label{def:conv-prob-loc-random} $\{ \mathbf G^n,\, n\ge1\}$ \textbf{converges locally in probability} to $\{ \mathbf G^n, n\ge1\}$ having (possibly random) distribution $\nu$ if for every bounded and continuous function $h:\mathcal{DN}_\bullet^S\to\R$ we have convergence in probability of
 $$ \E{h\big( (\mathbf G^n,\mathbf o_n) \big) \mid \mathbf G^n} = \frac{1}{n} \sum_{v\in V_n} h\big( (\mathbf G^n_t,v) \big) $$
 to the random variable $\bE_\nu\left[h\big( (\mathbf G^\infty,\mathbf o) \big)\right]$.
 \end{enumerate}
 \end{defi}
 
 Note that in Definition \ref{def:conv-prob-loc-random}, $\bE_\nu\left[h\big( (\mathbf G^\infty,\mathbf o) \big)\right]$ may be a (nondeterministic) random variable due to the possible random choice of $\nu$.
 However, in the common case of a deterministic distribution $\nu$, there is a convenient way of proving local convergence in probability.
 
 \begin{lem}\label{lem:joint-local-cv-to-local-in-proba}
 Let $\nu$ be a probability measure on $\mathcal{DN}_\bullet^S$.
 Given a random sequence $\{ \mathbf G^n, \,n\ge1 \}$ of dynamical graphs on $V_n$, consider $\mathbf o_n$ and $\mathbf o'_n$ two independent uniformly chosen vertices, and suppose that $\left( (\mathbf G^n, \mathbf o_n),(\mathbf G^n, \mathbf o'_n) \right)$ converges in distribution to $\nu\otimes\nu$.
 Then $\{ \mathbf G^n, \,n\ge1 \}$ converges locally in probability to $\nu$.
 \end{lem}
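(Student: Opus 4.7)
The plan is to use a standard second-moment argument converting the joint convergence of two independent roots into a concentration statement. Fix a bounded continuous function $h:\mathcal{DN}_\bullet^S\to\R$ and set
$$ X_n \coloneqq \frac{1}{n}\sum_{v\in V_n} h\big((\mathbf G^n,v)\big) = \E{h\big((\mathbf G^n,\mathbf o_n)\big) \mid \mathbf G^n}. $$
Since $\nu$ is a deterministic probability measure, $\bE_\nu\big[h((\mathbf G^\infty,\mathbf o))\big]$ is a constant, and by \Cref{def:weak-loc-conv}\ref{def:conv-prob-loc-random} it is enough to show that $X_n$ converges to this constant in probability for every such $h$.

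Projecting the joint law of $\big((\mathbf G^n,\mathbf o_n),(\mathbf G^n,\mathbf o'_n)\big)$ onto either marginal, one obtains that $(\mathbf G^n,\mathbf o_n)$ converges in distribution to $\nu$, hence $\E{X_n} = \E{h((\mathbf G^n,\mathbf o_n))} \to \bE_\nu[h]$ by boundedness and continuity of $h$. Next, the map $(g_1,g_2)\mapsto h(g_1)h(g_2)$ is bounded and continuous on the product space $\mathcal{DN}_\bullet^S\times\mathcal{DN}_\bullet^S$, so the joint convergence hypothesis yields
$$ \E{h((\mathbf G^n,\mathbf o_n))\,h((\mathbf G^n,\mathbf o'_n))} \longrightarrow \bE_\nu[h]^{\,2}. $$
Since $\mathbf o_n$ and $\mathbf o'_n$ are independent and uniform in $V_n$ conditionally on $\mathbf G^n$, one has
$$ \E{h((\mathbf G^n,\mathbf o_n))\,h((\mathbf G^n,\mathbf o'_n)) \mid \mathbf G^n} = \frac{1}{n^2}\sum_{v,w\in V_n} h((\mathbf G^n,v))\,h((\mathbf G^n,w)) = X_n^{\,2}, $$
so taking expectations gives $\E{X_n^{\,2}}\to \bE_\nu[h]^{\,2}$.

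Combining the two limits, $\var{X_n}=\E{X_n^{\,2}}-\E{X_n}^{\,2}\to 0$, so $X_n - \E{X_n}$ tends to $0$ in $L^2$, and hence in probability. Together with $\E{X_n}\to\bE_\nu[h]$, this yields $X_n\to\bE_\nu[h]$ in probability, which is the desired convergence. There is no real obstacle; the only subtlety to keep in mind is that the relevant topology on $\mathcal{DN}_\bullet^S\times\mathcal{DN}_\bullet^S$ is the product topology, for which tensor products $h\otimes h$ of bounded continuous functions are themselves bounded continuous, so that the joint weak convergence hypothesis can be applied directly to the product test function.
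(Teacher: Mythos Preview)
Your proof is correct and follows essentially the same second-moment argument as the paper: show $\E{X_n}\to\bE_\nu[h]$ from the marginal convergence, show $\E{X_n^2}\to\bE_\nu[h]^2$ by applying the joint convergence to the test function $h\otimes h$, and conclude $L^2$ (hence in probability) convergence of $X_n$ to the constant $\bE_\nu[h]$. Your write-up is in fact slightly more explicit than the paper's in naming $X_n$, isolating the variance, and flagging the product-topology point.
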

 
 \begin{proof}
 The result follows from a simple second moment computation.
 Let $h$ be continuous and bounded, and $(\mathbf G^\infty_t, \mathbf o)_{t\ge0}$ have distribution $\nu$.
 Then $h\big( (\mathbf G^n_t, \mathbf o_n)_{t\ge0} \big)$ converges in distribution to $h\big( (\mathbf G^\infty_t,\mathbf o)_{t\ge0} \big)$, and as $h$ is bounded, we also get a convergence of the first moment, hence
 $$ \E{\frac{1}{n} \sum_{v\in V_n} h\big( (\mathbf G^n_t,v)_{t\ge0} \big)}
 = \E{h\big( (\mathbf G^n_t, \mathbf o_n)_{t\ge0} \big)}
 \longrightarrow \E{h\big( (\mathbf G^\infty, \mathbf o) \big)} \, . $$
 We now compute the second moment of this random variable as
 \begin{align*}
 \E{\frac{1}{n^2}\sum_{v,v' \in V_n} h\big( (\mathbf G^n_t,v)_{t\ge0} \big) h\big( (\mathbf G^n_t,v')_{t\ge0} \big)}
 &= \E{h\big( (\mathbf G^n_t,\mathbf o_n)_{t\ge0} \big) h\big( (\mathbf G^n_t,\mathbf o'_n)_{t\ge0} \big)} \\
 & \longrightarrow \E{h\big( (\mathbf G^\infty_t, \mathbf o)_{t\ge0} \big)}^2 \, ,
 \end{align*}
 where the convergence comes from the convergence in distribution of $\left((\mathbf G^n_t,\mathbf o_n)_{t\ge0},(\mathbf G^n_t, \mathbf o'_n)_{t\ge0}\right)$ and again the fact that $h$ is continuous and bounded.
 Hence the random variable $\frac{1}{n} \sum_v h\big( (\mathbf G^n_t,v)_{t\ge0} \big)$ converges in $\Lp^2$ and in probability to the constant $\E{h\big( (\mathbf G^\infty_t, \mathbf o)_{t\ge0} \big)}$.
 \end{proof}

\begin{rem} \label{remark_Dynamical_components_or_balls}
	By Definition \ref{def:deterministic-loc-conv}, in order to prove the convergence in distribution of the dynamical components $\left( (\mathbf G^n_t,\mathbf o_n)_{t\ge0},(\mathbf G^n_t,\mathbf o'_n)_{t\ge0} \right)$ to $\nu\otimes\nu$, it suffices to prove that for fixed $d\ge 1$, $T<+\infty$ and $\delta>0$, one can couple the dynamical balls $(\mathbf G^{n,[d]}_t,\mathbf o_n)_{t\le T}$ and $(\mathbf G^{n,[d]}_t,\mathbf o'_n)_{t\le T}$ with $(\mathbf G^{[d]}_t, \mathbf o)_{t\le T}$ and  $(\mathbf G'^{[d]}_t, \mathbf o')_{t\le T}$, where $\mathbf G$ and $\mathbf G'$ are \textit{i.i.d.} dynamical graphs with law $\nu$, so that \emph{w.h.p.} as $n$ tends to infinity, they coincide in $\DGb$ with corresponding marks at distance less than $\delta$ from each other. 
\end{rem}


\section{Growth-and-Segmentation Trees}\label{section:GS-tree}

 In this section we introduce the \textbf{growth-and-segmentation multitype Poisson-Galton-Watson tree}, or more briefly $GSMPGW$ tree, appearing in our main Theorem. It should be seen as an analogue of the (multitype) Galton-Watson tree, in the context of dynamical graphs. In the time-dynamics of this process, the appearance of new edges may make it ``grow'' (in the sense that we concatenate a new tree to the existing one), while the disappearance of edges may split the tree into several connected components, hence obtaining a forest. We still call this process growth-and-segmentation \emph{tree} as it is a dynamically connected graph.
 
 As mentioned earlier, it will be convenient to work with ordered trees. This allows to identify unambiguously any given vertex during the time-dynamics, and to construct the growth-and-segmentation multitype Poisson-Galton-Watson tree as a Markov process on the set of ordered (segmented) trees. The $GSMPGW$ tree can then also be seen as a random element of $\DNb^S$ simply by forgetting the ordering of the children of each vertex, or of $\DGb$ by further forgetting the marks of the vertices.

\subsection{Ordered Multitype Trees and the Multitype Galton-Watson Tree}

 We begin with some notations.
 
 \begin{defi}
 We denote by 
 \begin{enumerate}[label=(\alph*)]
 \item $\T$ the set of \textbf{locally finite ordered rooted} trees. Using Neveu's notations \cite{Neveu}, a tree $\bt\in \T$ is identified with a subset of 
 $$ \mathcal U := \bigcup_{n\ge0} (\N^*)^n \, , $$
 the set of finite words on positive integers, with the convention $(\N^*)^0=\{\emptyset\}$, simply by identifying the $u_n$-th child of the $u_{n-1}$-th child of (...) of the $u_1$-th child of the root, with the word $u=(u_1,\dots,u_n)\in \mathcal U$. In particular, the root is identified with $\emptyset$.
 \item $\ST$ the set of \textbf{segmented} trees, namely trees in $\T$ where some of the edges can be segmented. For $\bt\in \ST$, we write $V_\bt$ for its set of vertices, $\tilde E_\bt$ for its full set of edges, and $E_\bt \subset \tilde E_\bt$ for the unsegmented ones.
 We call $(V_t,\tilde E_t)\in \T$ the full tree (containing both segmented and unsegmented edges) associated to the segmented tree. Note that $\tilde E_\bt \backslash E_\bt$ is then the set of segmented edges, and that the graph $(V_\bt,E_\bt)$ containing only the unsegmented edges is a forest rather than a tree. In the same spirit as percolation, we also say that an edge of $\tilde E_\bt\backslash E_\bt$ is \textbf{closed}, and an edge of $E_\bt$ is \textbf{open}. See Fig. \ref{fig:segmented-tree}.
 \item $\T_S$ the set of \textbf{multitype} trees, or more precisely trees in $\T$ with types in the mark space $S$. Formally, a tree with types in $S$ is a couple $(\bt,m)$ where $\bt$ is in $\T$, and $m$ is a function from the set of vertices of $\bt$ to the mark space $S$, associating its type to each vertex of $\bt$. A multitype tree is called \textbf{well-ordered} if the children of each vertex are ordered in increasing order of their types.
 \item $\ST_S$ the set of \textbf{multitype segmented} trees, or segmented trees with types in $S$.
 \end{enumerate} 
 \end{defi}
 
 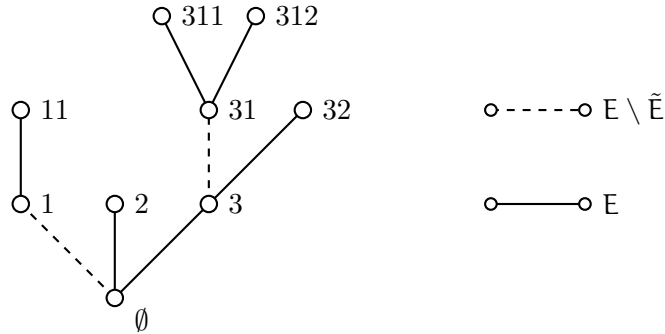
\begin{figure}[!ht]
 \centering
 \begin{tikzpicture}[scale=1.25]
 \draw[thick,dashed] (5,0)--(4,1);
 \draw[thick] (5,0)--(5,1);
 \draw[thick] (5,0)--(6,1);
 \draw[thick] (4,1)--(4,2);
 \draw[thick,dashed] (6,1)--(6,2);
 \draw[thick] (6,1)--(7,2);
 \draw[thick] (6,2)--(5.5,3); \draw[thick] (6,2)--(6.5,3);
 \node[cnvertex] at (5,0) {}; \draw (5.1,0) node[below right] {$\emptyset$};
 \node[cnvertex] at (4,1) {}; \draw (4.1,1) node[right] {$1$};
 \node[cnvertex] at (5,1) {}; \draw (5.1,1) node[right] {$2$};
 \node[cnvertex] at (6,1) {}; \draw (6.1,1) node[right] {$3$};
 \node[cnvertex] at (4,2) {}; \draw (4.1,2) node[right] {$11$};
 \node[cnvertex] at (6,2) {}; \draw (6.1,2) node[right] {$31$};
 \node[cnvertex] at (7,2) {}; \draw (7.1,2) node[right] {$32$};
 \node[cnvertex] at (5.5,3) {}; \draw (5.6,3) node[right] {$311$};
 \node[cnvertex] at (6.5,3) {}; \draw (6.6,3) node[right] {$312$};
 \draw[thick,dashed] (9,2)--(10,2) node[right=2.5pt] {$E\setminus\tilde E$};
 \draw[thick,fill=white] (9,2) circle (1.75pt); \draw[thick,fill=white] (10,2) circle (1.75pt);
 \draw[thick] (9,1)--(10,1) node[right=2.5pt] {$E$};
 \draw[thick,fill=white] (9,1) circle (1.75pt); \draw[thick,fill=white] (10,1) circle (1.75pt);
 \end{tikzpicture}
 \caption{Illustration of an ordered segmented tree. The closed edges are dashed.}\label{fig:segmented-tree}
 \end{figure}
 
 We use the common terminologies for trees, father, children, aso. For a vertex $u$ of a tree $\bt$, we write $|u|$ for its length, which is also the graph distance of the vertex to the root. The \textbf{height} of $\bt$ is defined by
 $$ h(\bt) = \sup\left\{|u|,\, u\in\bt\right\} \, . $$
 For $d\in\N^*$, we denote by $\T^{[d]}$ the subset of $\T$ consisting in trees with height less than $d$. The set $\T^{<\infty}= \cup_d \T^{[d]}$ is then the set of finite-height trees, or trees with finitely many vertices. For a tree $\bt$, we write $\bt^{[d]}$ for its  \textbf{restriction} to the first $d$ generations, or in other words it consists of the ball centred in $\emptyset$ with radius $d$. 
 All these definitions extend straightforwardly to segmented trees, or trees with types in $S$, leading to $\T^{[d]}_S$, $\T^{<\infty}_S$, $\ST^{[d]}_S$ and $\ST^{<\infty}_S$. 
 
 Let $\kappa:S\times S \to \R_+$ be a kernel on the mark space $S$. Then the multitype Poisson-Galton-Watson process with kernel $\kappa$ is classically defined as the branching process, where each particle of type $x\in S$ is replaced in the next generation by a set of individuals distributed as a Poisson point process on $S$ with intensity $\kappa(x,y)\,\mu(\de y)$. When $\kappa$ is a graphical kernel, these Poisson point processes will typically be of finite intensity, and thus define a set of children, which we can order with the natural order on $S$, thus defining an ordered tree.
 
 \begin{defi}
 Let $\kappa$ be a graphical kernel on $S$.
 A random tree on $\T_S$ is called \textbf{multitype Poisson-Galton-Watson tree} with connection kernel $\kappa$ and denoted $MPGW(\kappa)$ if
 \begin{enumerate}[label=(\alph*)]
 \item the root $\emptyset$ has type $x_\emptyset$ distributed according to $\mu$ on $S$, and
 \item a vertex $u$ with type $x_u=x$ has offspring distribution according to a Poisson point process $\Pi^u$ with intensity $\kappa(x,y)\,\mu(\de y)$ on $S$. The children of $u$ are ordered according to their types, thus defining a well-ordered tree.
 \end{enumerate}
 \end{defi}
 
  We also denote by $MPGW(\kappa,x)$ the multitype Poisson-Galton-Watson tree obtained by fixing the type of the root to be $x_\emptyset=x$. Note that the intensity of the Poisson point process $\Pi^u$ is finite for $\mu$-\textit{a.e.} $u$.
 Therefore $MPGW(\kappa)$ is well-defined up to a negligible set, and so is $MPGW(\kappa,x)$ for $\mu$-\textit{a.e.} $x$.  
 The multitype Poisson-Galton-Watson tree is the local weak limit of sparse inhomogeneous random graphs (see for instance \cite[Theorem 3.11]{Hofstad_volume_2}), which is precisely the result we mentioned in the abstract, that we generalize in this paper to dynamical settings.

\subsection{Growing Trees, Segmented Trees and Growth-and-Segmentation Trees}

 In this section, we construct the growth-and-segmentation Poisson-Galton-Watson tree. It should be seen as an analogue of the Poisson-Galton-Watson tree, but in dynamical settings where edges can additionally appear - leading to growth of the tree - or disappear - leading to segmentation of the tree. It will be convenient to first consider only the growth operation and define the growing multitype Poisson-Galton-Watson tree, or $GMPGW$ tree. The $GMPGW$ tree can also be seen as the accumulated graph of the $GSMPGW$ tree that we introduce later on.

\subsubsection{Growing Trees}

 \begin{defi} \label{def:growing_tree}
 A \textbf{growing (multitype) tree} $(\bt_t)_{t\ge0}$ is a growing sequence of ordered rooted multitype trees $\bt_t\in \T_S$, where for $t\ge0$ and vertex $u$ of $\bt_t$, the ordering of the children of $u$ in $\bt_t$ respects the order of appearance of the edges in the process.
 \end{defi}
 
 On $\T_S$ we may define a natural operation which can be used to describe growing multitype trees.
 Let $\br,\bs\in\T_S$ be two trees, and $u$ a vertex of $\br$.
 We denote by $\Gr(\br,\bs,u)$ the tree obtained by adding a child to the vertex $u$ of $\br$ and merging this child with the root of $\bs$.  
 See Fig. \ref{fig:growth-of-tree}. 

 \begin{defi}
 The random growing tree $(\mathfrak G^{\kappa,\beta}_t)_{t\ge0}$ is called \textbf{growing multitype Poisson-Galton-Watson tree} with kernels $\kappa$ and $\beta$ and denoted $GMPGW(\kappa,\beta)$ if it is a Markov process on $\T_S$ where:
 \begin{enumerate}[label=(\alph*)]
 \item\label{def:GMPGW-a} $\mathfrak G^{\kappa,\beta}_0$ is distributed as a $MPGW(\kappa)$, and
 \item\label{def:GMPGW-b} at each vertex $u\in\mathfrak G^{\kappa,\beta}_t$, a $MPGW(\kappa,y)$ is grown at rate $\kappa(x_u,y)\beta(x_u,y) \,\mu(\de y)$.
 \end{enumerate}
 \end{defi}
 
 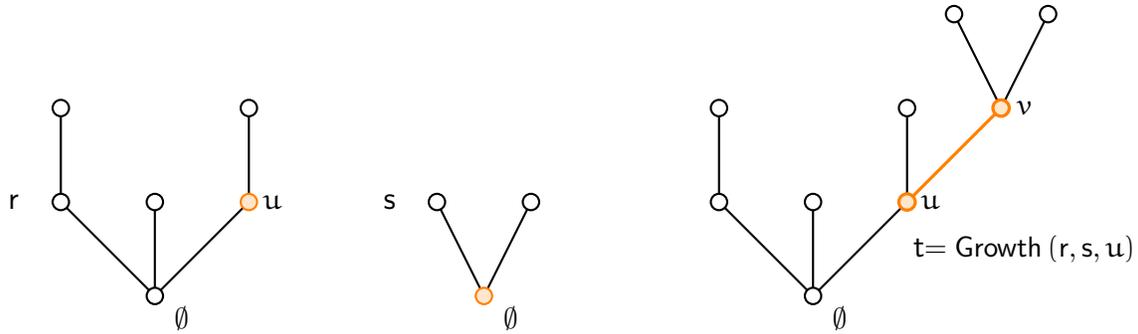
\begin{figure}[!ht]
 \centering
 \begin{tikzpicture}[scale=1.25]
 \draw[thick] (0,0)--(-1,1);
 \draw[thick] (0,0)--(0,1);
 \draw[thick] (0,0)--(1,1);
 \draw[thick] (-1,1)--(-1,2);
 \draw[thick] (1,1)--(1,2);
 \node[cnvertex] at (0,0) {}; \draw (0.1,0) node[below right] {$\emptyset$};
 \node[cnvertex] at (-1,1) {};
 \node[cnvertex] at (0,1) {};
 \node[covertex] at (1,1) {}; \draw (1.25,1) node {$u$};
 \node[cnvertex] at (-1,2) {};
 \node[cnvertex] at (1,2) {};
 \draw (-1.5,1) node {\large{$\br$}};
 
 \draw[thick] (3.5,0)--(3,1);
 \draw[thick] (3.5,0)--(4,1);
 \node[covertex] at (3.5,0) {}; \draw (3.6,0) node[below right] {$\emptyset$};
 \node[cnvertex] at (3,1) {};
 \node[cnvertex] at (4,1) {};
 \draw (2.5,1) node{\large{$\bs$}};
 
 \draw[thick] (7,0)--(6,1);
 \draw[thick] (7,0)--(7,1);
 \draw[thick] (7,0)--(8,1);
 \draw[thick] (6,1)--(6,2);
 \draw[thick] (8,1)--(8,2);
 \draw[very thick,orange] (8,1)--(9,2);
 \draw[thick] (9,2)--(8.5,3);
 \draw[thick] (9,2)--(9.5,3);
 \node[cnvertex] at (7,0) {}; \draw (7.1,0) node[below right] {$\emptyset$};
 \node[cnvertex] at (6,1) {};
 \node[cnvertex] at (7,1) {};
 \node[covertex, very thick] at (8,1) {}; \draw (8.25,1) node {$u$};
 \node[cnvertex] at (6,2) {};
 \node[cnvertex] at (8,2) {};
 \node[covertex, very thick] at (9,2) {}; \draw (9.25,2) node {$v$};
 \node[cnvertex] at (8.5,3) {};
 \node[cnvertex] at (9.5,3) {};
 \draw (9.25,0.5) node {{\large{$\bt$}}$=\Gr\left(\br,\bs,u\right)$};
 \end{tikzpicture}
 \caption{Illustration of the growing operation of trees: the tree $\bt$ (right) is obtained by attaching the tree $\bs$ (center) to the tree $\br$ (left) at vertex $u$ of $\br$.}\label{fig:growth-of-tree}
 \end{figure}
 
 \begin{rem}
 \begin{itemize}
 \item Note that our definition requires both the connection kernel $\kappa$ and the updating kernel $\beta$. Informally, $\kappa(x,y)\,\mu(\de y)$ is the probability of initially finding a child of a vertex of type $x$ which has type in $\de y$. We then multiply this quantity by $\beta(x,y)\,\de t$ to obtain the probability of such an edge being grown in time interval $\de t$.
 \item One can restate \ref{def:GMPGW-b} by saying that a tree is grown at $u$ at total rate
 $$ \int_S \kappa(x_u,z)\beta(x_u,z)\,\mu(\de z) \, , $$
 and is distributed as $MPGW(\kappa,y)$ where $y$ is itself distributed according to the probability measure
 $$ \frac{\kappa(x_u,y)\beta(x_u,y)\,\mu(\de y)}{\int_S \kappa(x_u,z)\beta(x_u,z)\,\mu(\de z)} \,. $$
 Note that the rate $\int \kappa(x_u,z)\beta(x_u,z)\,\mu(\de z)$ is \textit{a.s.} finite for every encountered vertex $x_u$.
 \item 
 The definition might require some caution as the total rate at which new trees are grown to the whole (possibly) infinite tree $\mathfrak G^{\kappa,\beta}_t$ will typically be infinite. One way of making this definition rigorous is by truncating the $GMPGW$ tree to the first $d$ generations to have a well-defined continuous-time Markov chain $(\mathfrak G^{\kappa,\beta,[d]}_t)_{t\ge0}$ on $\T^{[d]}_S$, whose total jumping rate is finite on almost every encountered state in $\T^{[d]}_S$. We can then use the consistent family of the laws of $(\mathfrak G^{\kappa,\beta,[d]}_t)_{t\ge0}$ for $d\ge0$ to define the law of $(\mathfrak G^{\kappa,\beta}_t)_{t\ge0}$, as a Markov process on $\T_S$ with c\`adl\`ag trajectories. 
 
 \item The definition could naturally lead to construct (or explore) the whole progeny of a vertex as soon as it is added to the tree. However, it is not less natural to construct (or explore) the growing set of direct children of a vertex $x$ up to some time $T$ (that may be finite or infinite), before looking at the progeny of these children. When added to the tree, a vertex $u$ then immediately has a progeny given by a Poisson point process of intensity $\kappa(x_u,y)\,\mu(\de y)$, and later on the vertex receives a child of type $y$ at rate $\kappa(x_u,y)\beta(x_u,y)\,\mu(\de y)$. 
 This provides another rigorous construction of the process, naturally equivalent to the previous one.
 \end{itemize}
 \end{rem}

\subsubsection{Growth-and-Segmentation Trees}\label{subsubsection:GST}

 We begin with a definition.
 
 \begin{defi}
 A \textbf{growth-and-segmentation multitype tree} $(\bt_t)_{t\ge0}$ is a sequence of multitype segmented trees $\bt_t\in \ST_S$, where the associated full tree is a growing multitype tree in the sense of Definition~\ref{def:growing_tree}.
 \end{defi}
 
 Our definitions of $\bt=\Gr(\br,\bs,u)$, of $\bt_u$ and $\bt^u$, extend straightforwardly to the case where $\br$, $\bs$ and $\bt$ can now be segmented trees.
 We also denote by $\bt=\Sp(\bs,u)$ the segmented tree obtained by closing the edge between the vertex $u$ of $\bt$ different from the root, and its parent.
 This action defines two sub-segmented trees of $\bt$, one containing the root, denoted $\bt^u$ and called the \textbf{pruned branch} of $\bt$ at $u$, and the second containing $u$, denoted $\bt_u$ and called \textbf{remaining segmented tree} at $u$. See Fig. \ref{fig:removing-edge}.
 
 \begin{figure}[!ht]
 \centering
 \begin{tikzpicture}[scale=1.1]
 \draw[thick,dashed] (0,0)--(-1,1);
 \draw[thick] (0,0)--(0,1);
 \draw[thick] (0,0)--(1,1);
 \draw[thick] (-1,1)--(-1,2);
 \draw[thick] (1,1)--(1,2);
 \draw[thick] (1,1)--(2,2);
 \draw[thick] (1,2)--(0.5,3);
 \draw[thick] (1,2)--(1.5,3);
 \node[cnvertex] at (0,0) {}; \draw(0.25,0) node[right] {$\emptyset$};
 \node[cnvertex] at (-1,1) {};
 \node[cnvertex] at (0,1) {};
 \node[cnvertex] at (1,1) {}; \draw (1.25,1) node[right] {$u$};
 \node[cnvertex] at (-1,2) {};
 \node[cnvertex] at (1,2) {};
 \node[cnvertex] at (2,2) {};
 \node[cnvertex] at (0.5,3) {};
 \node[cnvertex] at (1.5,3) {};
 \draw (-1.5,1) node {$\bs$};
 \draw[thick,dashed] (5,0)--(4,1);
 \draw[thick] (5,0)--(5,1);
 \draw[ultra thick,dashed,orange] (5,0)--(6,1);
 \draw[thick] (4,1)--(4,2);
 \draw[thick] (6,1)--(6,2);
 \draw[thick] (6,1)--(7,2);
 \draw[thick] (6,2)--(5.5,3);
 \draw[thick] (6,2)--(6.5,3);
 \node[cnvertex] at (5,0) {}; \draw (5.25,0) node[right] {$\emptyset$};
 \node[cnvertex] at (4,1) {};
 \node[cnvertex] at (5,1) {};
 \node[cnvertex] at (6,1) {}; \draw (6.25,1) node[right] {$u$};
 \node[cnvertex] at (4,2) {};
 \node[cnvertex] at (6,2) {};
 \node[cnvertex] at (7,2) {};
 \node[cnvertex] at (5.5,3) {};
 \node[cnvertex] at (6.5,3) {};
 \draw (8,0.5) node{{\large{$\bt$}}$=\Sp\left(\bs,u\right)$};
 \end{tikzpicture}
 \caption{Illustration of the segmentation operation: tree $\bs$ (left) is segmented at vertex $u$.}\label{fig:removing-edge}
 \end{figure}
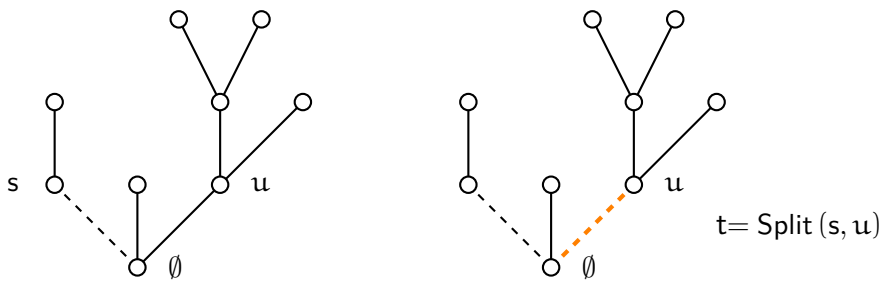

\subsubsection{Growth-and-Segmentation Multitype Poisson-Galton-Watson Tree}

 We can now define the \textit{growth-and-segmentation multitype Poisson-Galton-Watson trees} similarly as the $GMPGW(\kappa,\beta)$, but with the additional dynamics that each edge $uv$ present in the process is segmented at rate $\beta(x_u,x_v)$.
 
 \begin{defi}
 The random growth-and-segmentation tree $(\Fo^{\kappa,\beta}_t)_{t\ge0}$ is called \textbf{growth-and-seg\-mentation multitype Poisson-Galton-Watson tree} with edge kernels $\kappa$ and $\beta$, and denoted $GSMPGW(\kappa,\beta)$ if it is a Markov process on $\ST_S$ where:
 \begin{enumerate}[label=(\alph*)]
 \item $\Fo^{\kappa,\beta}_0$ is distributed as a $MPGW(\kappa)$,
 \item each edge $uv$ present in $\Fo^{\kappa,\beta}_t$ is segmented at rate $\beta(x_u,x_v)$, and
 \item at each vertex $u\in\Fo^{\kappa,\beta}_t$, a $MPGW(\kappa,y)$ is grown at rate $\kappa(x_u,y)\beta(x_u,y)\,\mu(\de y)$.
 \end{enumerate}
 \end{defi}
 
 Similarly as before, this defines strong Markov process on $\ST_S$ with c\`adl\`ag trajectories.

\subsection{Monotone Approximation of $GSMPGW$}

 We finish this part in showing a monotone approximation for $GSMPGW$.
 
 \begin{prop}[Monotone coupling of $GSMPGW$]\label{prop:monotone-coupling-GSMPGW}
 Suppose $\kappa$ and $\beta$ are graphical kernels and $(\kappa_n,\beta_n)_{n\ge0}$ is a sequence of kernels such that $\kappa_n(x,y)\nearrow\kappa(x,y)$ and $\beta_n(x,y)\nearrow\beta(x,y)$.
 Then we can couple the processes $\Fo^{\kappa_n,\beta_n}$ and $\Fo^{\kappa,\beta}$ so that almost surely, for every finite time $T\ge0$ and distance $d\ge1$, we have for all $n$ large enough,
 \begin{equation*} \Fo^{\kappa_n,\beta_n,[d]}_t = \Fo^{\kappa,\beta,[d]}_t \qquad \forall t\le T \, . \end{equation*}
 \end{prop}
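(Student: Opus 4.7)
The plan is to build, on a single probability space, a graphical construction of $\Fo^{\kappa,\beta}$ and of all the $\Fo^{\kappa_n,\beta_n}$ simultaneously, in which $\Fo^{\kappa_n,\beta_n}$ is a sub-process of $\Fo^{\kappa,\beta}$ obtained by thinning through uniform marks. For each vertex $u$ of type $x$ appearing in the construction, one attaches a PPP on $S\times[0,1]$ of intensity $\mu(\de y)\otimes\de U$ (to generate the initial children of the $MPGW$ planted at $u$) and a PPP on $S\times[b(u),\infty)\times[0,1]$ of intensity $\mu(\de y)\otimes\de t\otimes\de U$ (to generate later growth children); to every edge $uv$ born at time $s_e$ with child-type $y=x_v$, one attaches a PPP on $[s_e,\infty)\times[0,1]$ of intensity $\de t\otimes\de U$ carrying the segmentation candidates. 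The $(\kappa,\beta)$-process keeps a point iff its mark $U$ is at most $\kappa(x,y)$, $\kappa(x,y)\beta(x,y)$ or $\beta(x,y)$ respectively, and the $(\kappa_n,\beta_n)$-process uses the same PPPs with $\kappa_n$ and $\beta_n$ in place of $\kappa$ and $\beta$. Since $\kappa_n\le\kappa$ and $\beta_n\le\beta$, every vertex, every creation and every segmentation retained by $\Fo^{\kappa_n,\beta_n}$ is retained by $\Fo^{\kappa,\beta}$.

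Next, I would fix $d\ge1$ and $T<\infty$ and introduce the \emph{accumulated ball of radius $d$ up to time $T$} in $\Fo^{\kappa,\beta}$, namely the subtree $\mathcal R_{d,T}$ induced by the vertices at graph distance at most $d$ from the root in the accumulated graph $(V_T,\tilde E_{\le T})$, together with the list of every creation and segmentation event occurring in $[0,T]$ inside this subtree. A straightforward induction on $d$, using that $\kappa,\kappa\beta\in\Lp^1(\mu\otimes\mu)$ and that every explored vertex has a $\mu$-distributed type (so that the intensities driving its initial and grown children are a.s.\ finite), shows that $\mathcal R_{d,T}$ has a.s.\ finitely many vertices, hence finitely many events. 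Each such event corresponds to a PPP point whose uniform mark $U$ satisfies a \emph{strict} inequality against the limiting kernel (strict because the uniform law is atomless); since $\kappa_n\nearrow\kappa$ and $\beta_n\nearrow\beta$ pointwise, the analogous inequality with $\kappa_n,\beta_n$ is eventually satisfied. Taking a maximum over the finite set of events of $\mathcal R_{d,T}$ produces an a.s.\ finite random integer $n_0=n_0(\omega,d,T)$ such that, for every $n\ge n_0$, the $(\kappa,\beta)$- and $(\kappa_n,\beta_n)$-trees carry exactly the same vertices and the same creations and segmentations inside $\mathcal R_{d,T}$.

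The conclusion then follows essentially at once. By definition the dynamical ball of radius $d$ on $[0,T]$ is a subset of the accumulated ball of radius $d$ on $[0,T]$, hence lies in $\mathcal R_{d,T}$ for $\Fo^{\kappa,\beta}$; and by monotonicity of the coupling, removing edges can only increase accumulated graph distances, so the dynamical ball of $\Fo^{\kappa_n,\beta_n}$ is also contained in $\mathcal R_{d,T}$. On this common finite scaffold the two trees coincide as soon as $n\ge n_0$, and consequently $\Fo^{\kappa_n,\beta_n,[d]}_t=\Fo^{\kappa,\beta,[d]}_t$ for every $t\le T$. A countable intersection over $(d,T)\in\N\times\N$ upgrades this to the stated almost sure statement for all $d$ and all finite $T$. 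The subtlest point I anticipate is precisely the last inclusion: thinning keeps edges open for \emph{longer} in $\Fo^{\kappa_n,\beta_n}$, and without care one could fear that new short space-time paths in the approximating tree drag vertices into its dynamical ball that lie outside $\mathcal R_{d,T}$; the accumulated-graph framing disposes of this concern, since fewer edges can only increase accumulated distances.
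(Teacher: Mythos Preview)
Your approach is essentially the same as the paper's: both build the processes from Poisson point processes attached to each vertex (one for the initial children, one for the later growth children), couple these PPPs monotonically, and then use that the accumulated ball of radius $d$ up to time $T$ contains only finitely many vertices and events, so that eventually all creation and segmentation events coincide. Your write-up is more explicit than the paper's sketch, in particular about the thinning via marks and about why the dynamical ball of $\Fo^{\kappa_n,\beta_n}$ cannot escape the accumulated ball $\mathcal R_{d,T}$ of $\Fo^{\kappa,\beta}$ despite edges staying open longer; the paper simply asserts that the PPPs and segmentation times can be made to coincide for large $n$.

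One small technical slip: taking the mark $U$ uniform on $[0,1]$ and retaining a point when $U\le\kappa(x,y)$ (respectively $U\le\kappa(x,y)\beta(x,y)$, $U\le\beta(x,y)$) only yields the correct intensities if these quantities are bounded by $1$, which is not assumed. You should either let the marks live in $[0,\infty)$ with Lebesgue intensity, or equivalently run a PPP with the limiting intensity $\kappa(x,y)\mu(\de y)$ (resp.\ $\kappa(x,y)\beta(x,y)\mu(\de y)\,\de t$, $\beta(x,y)\,\de t$) and thin by independent coins with success probability $\kappa_n/\kappa$ (resp.\ $\kappa_n\beta_n/(\kappa\beta)$, $\beta_n/\beta$). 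With this adjustment your argument is correct.
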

 
 Note that it follows from this proposition that $\Fo^{\kappa_n,\beta_n}$ converges in distribution to $\Fo^{\kappa,\beta}$ in $\mathcal{DN}_\bullet$ (or in $\mathcal{DG}_\bullet$) and the result is actually stronger, as in the theorem the marks in $\Fo^{\kappa_n,\beta_n,[d]}_t$ and in $\Fo^{\kappa,\beta,[d]}_t$ are equal when $n$ is large, and not only close to each other.
 
 The proof of this easy result is best seen in the ``temporal first'' construction of the growth-and-segmentation tree.
 In this construction, for each vertex $u$ starting from the root, its children are constructed with the use of
 \begin{itemize}
 \item a Poisson Point Process (PPP) of intensity $\kappa(x_u,y) \mu(\de y)$ as well as another PPP of intensity 
 $$ \kappa(x_u,y) \beta(x_u,y) \mu(\de y) \1{0\le t\le T} \,\de t \, , $$
 in the case of the growth-and-segmentation tree $(\Fo^{\kappa,\beta,[d]}_t)_{0\le t\le T}$,
 \item a PPP of intensity $\kappa_n(x_u,y) \mu(\de y)$ as well as another PPP of intensity 
 $$ \kappa_n(x_u,y) \beta_n(x_u,y) \mu(\de y) \1{0\le t\le T}\, \de t \, , $$ 
 in the case of the growth-and-segmentation tree $(\Fo^{\kappa_n,\beta_n,[d]}_t)_{0\le t\le T}$.
 \end{itemize}
 and we couple the involved PPP so as to guarantee that they \textit{a.s.} coincide for large $n$.
 Hence, almost surely, for $n$ large enough the growing trees coincide up to time $T$ and distance $d$ (including finitely many vertices).
 Finally, taking $n$ large enough also ensures that we can make the segmentation times coincide for all the (finitely many) edges, almost surely. 
 \hfill\qedsymbol


\section{Main Results}\label{section:main-results}

 Our main result shows that for all suitable kernels $\kappa$ and $\beta$, the dynamical inhomogeneous random graph $\big(G^{n,\kappa,\beta}_t\big)_{t\ge0}$ admits a local limit.
 
 \begin{thm}[Local weak convergence in probability
 	]\label{thm:main-result-0}
 As $n$ tends to infinity, the (marked) dynamical inhomogeneous random graph $\big(G^{n,\kappa,\beta}_t\big)_{t\ge0}$ converges locally in probability to (the distribution of) the $GSMPGW(\kappa,\beta)$, in the sense of Definition \ref{def:weak-loc-conv}.
 \end{thm}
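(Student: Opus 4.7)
The plan is to combine \Cref{lem:joint-local-cv-to-local-in-proba} and \Cref{remark_Dynamical_components_or_balls}, so as to reduce the statement to the construction, for every fixed $d\ge 1$, $T<\infty$ and $\delta>0$, of a coupling of the two dynamical balls $\big(G^{n,\kappa,\beta,[d]}_t,\mathbf o_n\big)_{t\le T}$ and $\big(G^{n,\kappa,\beta,[d]}_t,\mathbf o'_n\big)_{t\le T}$ (where $\mathbf o_n,\mathbf o'_n$ are independent uniform vertices of $V_n$) with two independent copies $(\Fo^{\kappa,\beta,[d]}_t,\mathbf o)_{t\le T}$ and $(\Fo'^{\kappa,\beta,[d]}_t,\mathbf o')_{t\le T}$ of the $d$-ball of the $GSMPGW(\kappa,\beta)$, such that the four objects coincide in $\mathcal{DN}_\bullet^S$ (up to the $\delta$-tolerance on the marks) with probability tending to $1$ as $n\to\infty$.

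The proof then proceeds in two main steps. First, I would treat the case where $\kappa$ and $\beta$ take only finitely many values (finite-type kernels, which without loss of generality can be taken with $S$ finite). In that case I perform a breadth-first exploration of the dynamical ball: starting from $\mathbf o_n$, reveal the edges incident to $\mathbf o_n$ present at time $0$ and then, as time grows in $[0,T]$, reveal successively the new edges incident to $\mathbf o_n$ that open or close, before moving on to the next generation. At each step the number of as-yet-unexplored neighbours of a given vertex $u$ of type $x$, of type $y$, is $\mathrm{Bin}(n\nu_n(\{y\})-O(1),\kappa(x,y)/n\wedge 1)$ which, by the graphical hypothesis \eqref{eq:nu_n}, converges to $\mathrm{Poi}(\kappa(x,y)\mu(\{y\}))$; similarly each pair $(u,v)$ with $v$ not yet in the explored ball produces new appearances of the edge at rate $\beta(x_u,x_v)\cdot(\kappa(x_u,x_v)/n\wedge 1)$ per vertex, and these sum to a rate converging to $\beta(x_u,y)\kappa(x_u,y)\mu(\{y\})$ per type $y$; the segmentation of an already revealed edge simply happens at the correct rate $\beta(x_u,x_v)$. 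Matching these statistics to those in the definition of $\Fo^{\kappa,\beta}$, a straightforward coupling of binomials with Poissons and of two-state Markov chains with their large-$n$ limits yields a coupling of the one-root balls that fails with probability at most $C(1+\kappa+\kappa T)^{2d+1}/n$, where $C$ depends only on $d$, $T$, the maxima of the kernels and the law $\mu$. Independence of the two explored balls comes for free as soon as we can ensure that they do not intersect, which is yet another $O(n^{-1})$ event; this is the step which would be formalised as \Cref{thm:main-result-2} quoted in the introduction.

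Second, I pass from finite-type kernels to general graphical ones. Fix $\eps>0$ and use compactness and almost-everywhere continuity of $\kappa,\beta$ to build regular finitary kernels $\kappa_\eps\le\kappa$ and $\beta_\eps\le\beta$ obtained by a finite partition of $S$ (say into small cells), and constant on each product cell, such that $\kappa_\eps\to\kappa$, $\beta_\eps\to\beta$ monotonically almost everywhere as $\eps\to 0$. For the discrete side, the coupling \eqref{eq:approximation-of-kernel} applied edge by edge shows that the dynamical ball of radius $d$ up to time $T$ in $G^{n,\kappa,\beta}$ and in $G^{n,\kappa_\eps,\beta_\eps}$ coincide with probability going to $1$ as $\eps\to 0$, uniformly in $n$, by a union bound over the $O((1+\kappa+\kappa T)^{2d+1})$ edges explored. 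On the limit side, \Cref{prop:monotone-coupling-GSMPGW} gives exactly the same type of statement: $\Fo^{\kappa_\eps,\beta_\eps,[d]}_{\cdot}$ coincides with $\Fo^{\kappa,\beta,[d]}_{\cdot}$ on $[0,T]$ with probability going to $1$ as $\eps\to 0$. Combining the two with the step-one coupling on $(\kappa_\eps,\beta_\eps)$ via a diagonal extraction produces the desired coupling in the general graphical case, up to marks which live in cells of diameter $\le\delta$ in $S$; this is exactly the kind of approximation tolerated by the definition of $D_\bullet^S$.

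The main obstacle is really the first step, namely the control of the exploration: one must simultaneously follow the initial random graph (Bernoulli edges), the edge-updating Markov chains, and the growing set of explored vertices, and argue that with high probability no ``collision'' occurs, that is, no already explored vertex is re-discovered through a new dynamical edge during $[0,T]$. This is where the quantitative bound $(1+\kappa+\kappa T)^{2d+1}/n$ arises, since this factor bounds both the total number of explored edges and the typical growth rate of the ball along the time interval. Once collisions are excluded, the exploration is literally tree-shaped and the coupling with $GSMPGW(\kappa,\beta)$ reduces to well-understood binomial/Poisson and two-state-Markov-chain comparisons.
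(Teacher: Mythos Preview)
Your proposal is correct and follows essentially the same architecture as the paper: reduce to a two-root coupling via \Cref{lem:joint-local-cv-to-local-in-proba} and \Cref{remark_Dynamical_components_or_balls}, establish the coupling first for finite-type kernels by an explicit exploration with binomial/Poisson and rate comparisons plus a collision bound (this is exactly \Cref{thm:main-result-2}), and then pass to general graphical kernels by monotone approximation, using \eqref{eq:approximation-of-kernel} on the finite side and \Cref{prop:monotone-coupling-GSMPGW} on the limit side.

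One point where the paper is more careful than your sketch: in your second step you claim the dynamical balls of $G^{n,\kappa,\beta}$ and $G^{n,\kappa_\eps,\beta_\eps}$ agree by a union bound over $O((1+\kappa+\kappa T)^{2d+1})$ explored edges, uniformly in $n$. For unbounded $\kappa$ this quantity is not a finite constant, and the per-edge error from \eqref{eq:approximation-of-kernel} need not be uniformly small either. The paper instead uses the graphical hypotheses \ref{def:graphical-iii}--\ref{def:graphical-iv} to show that the total number of edges on which the two dynamical graphs differ up to time $T$ is $o(n)$ (this is \eqref{inequality:sublinear-differing-edges}), and then argues that a ball containing at most $K$ edges around a uniformly chosen root avoids all of these $o(n)$ bad edges with high probability. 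This is the robust way to close the approximation step when $\kappa$ is merely integrable.
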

 
 To establish this result, from Lemma \ref{lem:joint-local-cv-to-local-in-proba} it suffices to prove that when considering two independent roots uniformly chosen among the vertex set $V_n$ of $G^{n,\kappa,\beta}$, the two dynamical marked graphs converge jointly to two independent copies of the limit, which is the case $k=2$ of the following theorem:
 
 \begin{thm}[Joint local convergence of dynamical components]\label{thm:main-result-1}
 For all $k\in\N$, as $n\to\infty$, the $k$ dynamical components 
 $$ \left( ( G^{n,\kappa,\beta,[\infty]}_t,\mathbf o_1 )_{t\ge 0},\dots,( G^{n,\kappa,\beta,[\infty]}_t,\mathbf o_k )_{t\ge 0} \right) $$
 seen as random variables on $\DNb^S$ and rooted at uniform roots $\mathbf o_1,\ldots,\mathbf o_k$ in $V_n$,
 jointly converge in distribution to $k$ independent copies of the $GSMPGW(\kappa,\beta)$.
 \end{thm}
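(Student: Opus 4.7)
By \Cref{remark_Dynamical_components_or_balls}, it is enough to exhibit, for every fixed $d\ge 1$, $T<\infty$ and $\delta>0$, a coupling of the $k$ dynamical balls $\bigl(G^{n,\kappa,\beta,[d]}_t,\mathbf{o}_j\bigr)_{t\le T}$, $j=1,\dots,k$, with $k$ independent dynamical balls of radius $d$ up to time $T$ of independent copies of $GSMPGW(\kappa,\beta)$, such that with probability tending to $1$ the dynamical balls coincide in $\DGb$ and the corresponding marks are within $\delta$ of each other. I would then use \Cref{prop:monotone-coupling-GSMPGW} on the limit side and a parallel monotone coupling on the random graph side, quantified by the bound~\eqref{eq:approximation-of-kernel}, to replace $\kappa$ and $\beta$ by finite-type kernels $\kappa_r\nearrow\kappa$, $\beta_r\nearrow\beta$ up to an error on the coupling probability that vanishes as $r\to\infty$. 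Since $\kappa$ and $\beta$ are continuous $\mu\otimes\mu$-almost everywhere and $S$ is compact, such regular finitary approximations exist. This reduces the problem to the case where $S$ is finite.

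In this finite-type setting, I would build the coupling by a joint breadth-first exploration of the $k$ balls, generation by generation. At each step, one picks an unexplored vertex $u$ of an already-revealed generation and reveals the set of \emph{ever-neighbors} of $u$ in $[0,T]$, namely all $v$ such that the edge $uv$ is open at some time in $[0,T]$, together with the full time trajectory of $uv$ on $[0,T]$. Conditionally on the history of the exploration, the number of type-$y$ ever-neighbors of $u$ is binomial with parameters $\bigl(N^n_y,\,1-(1-p_{uv})e^{-\beta_{uv}p_{uv}T}\bigr)$, where $N^n_y$ is the number of yet-undiscovered type-$y$ vertices. Since $N^n_y/n\to\mu(\{y\})$ and $p_{uv}=\kappa(x_u,y)/n$, this binomial is close in total variation to a Poisson with parameter $\kappa(x_u,y)\bigl(1+\beta(x_u,y)T\bigr)\mu(\{y\})$, which is precisely the expected number of type-$y$ children of $u$ (initial plus grown) by time $T$ in the $GSMPGW$. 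Once the counts are matched, I would use the Markov-process coupling of \Cref{appendix:coupling-MP} to couple each discovered edge trajectory with that of the corresponding $GSMPGW$ edge: either an initial edge segmented at rate $\beta(x_u,x_v)$, or an edge grown at some random time in $[0,T]$ and then segmented at rate $\beta(x_u,x_v)$.

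The main obstacles to control are: (i) the binomial-to-Poisson total-variation error at each exploration step, of order $O(1/n)$ per vertex explored; (ii) the probability that some incident edge flips at least twice on $[0,T]$, which would have no counterpart in the $GSMPGW$, and which is of order $O\bigl((\beta_{uv}p_{uv}T)^2\bigr)$ per potential neighbor, summing to $O(T^2/n)$ per vertex; and (iii) the possibility that two explorations started from different roots $\mathbf o_i,\mathbf o_j$ meet, which would spoil asymptotic independence. Items (i) and (ii) are handled by direct total-variation bounds. Item (iii) is the trickiest, and is controlled inductively on the $d$ generations: at each generation the mean out-degree of the exploration is bounded by $\sup_{x}\int \kappa(x,y)(1+\beta(x,y)T)\,\mu(\de y)$, so the expected size of a single dynamical ball is at most of order $(1+\kappa+\kappa T)^d$, and a union bound then bounds the collision probability by $O\bigl(k^2(1+\kappa+\kappa T)^{2d}/n\bigr)$. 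Putting the three controls together yields a coupling that succeeds with probability $1-O(1/n)$ (with the implicit constant depending explicitly on $d$, $T$ and the finite-type kernels), which proves the joint convergence in distribution as $n\to\infty$ and simultaneously a quantitative bound analogous to the one given in the introduction for dynamical percolation.
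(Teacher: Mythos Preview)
Your overall architecture matches the paper's: reduce to \Cref{thm:coupling-convergence} via \Cref{remark_Dynamical_components_or_balls}, then approximate $(\kappa,\beta)$ by regular finitary kernels using \Cref{prop:monotone-coupling-GSMPGW} on the limit side and the edge-coupling bound~\eqref{eq:approximation-of-kernel} on the finite-graph side, and finally handle the finite-type case directly. This is exactly the route of Sections~6.1--6.2.

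For the finite-type coupling itself, your decomposition is genuinely different from the paper's. The paper treats the $k$ dynamical balls as a joint Markov process $(F^{(n),1}_t,\dots,F^{(n),k}_t)$ on $(\ST^{[d]}_S)^k\cup\{\dagger\}$, writes down its transition rates (segmentation, growth, collision), introduces an intermediate process $\tilde F^{(n)}$ sharing the \emph{rates} of $F^{(n)}$ but the \emph{exploration law} of the $GSMPGW$, and then couples $\tilde F^{(n)}$ to $\Fo$ via the rate-comparison \Cref{lem:coupling-Markov-process} and $\tilde F^{(n)}$ to $F^{(n)}$ via binomial--Poisson coupling at each growth event. Your approach is ``spatial first'': you reveal, for each vertex in turn, its full set of ever-neighbors on $[0,T]$ together with the edge trajectories, and only then move to the next generation. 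This is closer in spirit to the static exploration argument and avoids the intermediate process, at the price of having to couple conditioned edge trajectories rather than unconditioned rates. Both lead to the same quantitative bound.

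There is one genuine inaccuracy you need to fix. For a vertex $u$ entering the dynamical ball at time $b(u)>0$, the relevant children are \emph{not} its ever-neighbors on $[0,T]$ but on $[b(u),T]$: an edge $uv$ open only before $b(u)$ does not bring $v$ into the ball. Correspondingly, the target Poisson parameter in the $GSMPGW$ is $\kappa(x_u,y)\bigl(1+\beta(x_u,y)(T-b(u))\bigr)\mu(\{y\})$, not $\kappa(x_u,y)(1+\beta(x_u,y)T)\mu(\{y\})$, since a vertex grown at time $b(u)$ only accumulates children over $[b(u),T]$. The fix is straightforward---$b(u)$ is known by the time you explore $u$, because the trajectory of the edge leading to $u$ has already been coupled---but as written your binomial and Poisson parameters do not match beyond the first generation. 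The paper's temporal-first decomposition sidesteps this bookkeeping, since the Markov clock automatically records the arrival time of each grown subtree.
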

 
 By Remark \ref{remark_Dynamical_components_or_balls} (or its obvious generalization with $k$ dynamical components), it is actually sufficient, for finite $d$ and $T$ and positive $\delta$, to couple the the dynamical balls up to distance $d$ and time $T$, with corresponding marks at distance less than $\delta$ from each other. This is the content of the following theorem:

 \begin{thm}[Coupling of dynamical balls]\label{thm:coupling-convergence}
 Fix $k\in\N$, $d\in\N$, $\delta>0$ and $T>0$.
 Then one can couple the $k$ randomly rooted dynamical balls 
  $$ \left( ( G^{n,\kappa,\beta,[d]}_t,\mathbf o_1 )_{0\le t\le T},\dots,( G^{n,\kappa,\beta,[d]}_t,\mathbf o_k )_{0\le t\le T} \right) $$
 with $k$ independent copies of the $GSMPGW(\kappa,\beta)$, restricted to distance $d$ and time $[0,T]$, so that they coincide (in $\DG_\bullet$) with corresponding marks at distance less than $\delta$ from each other \textit{w.h.p.} as $n\to\infty$.
 \end{thm}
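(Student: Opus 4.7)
The plan is to prove the theorem by an explicit joint exploration coupling, in three stages: reduce to a finite-type kernel, construct a breadth-first exploration that reveals graph edges and their time-dynamics simultaneously with the branching structure of $k$ independent $GSMPGW$ trees, and finally control the collision probability via a uniform bound on the size of the dynamical ball.

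For the reduction, I use the compactness and the order on $S$ to partition $S$ into finitely many pieces $A_1,\ldots, A_r$ of diameter at most $\delta/2$. On each product piece $A_i\times A_j$ I replace $\kappa$ and $\beta$ by suitable constants, obtaining a finite-type (regular finitary) pair $(\kappa',\beta')$ with $\kappa'\le \kappa$ and $\beta'\le \beta$ and with $\int|\kappa-\kappa'|\,\de\mu\otimes\mu$ and $\int|\kappa\beta-\kappa'\beta'|\,\de\mu\otimes\mu$ as small as desired, by \textit{a.e.}-continuity and graphicality of the kernels. The edge-wise coupling estimate \eqref{eq:approximation-of-kernel}, summed over the edges touching the dynamical ball (a number that will be bounded uniformly in $n$ in stage three), allows me to couple $G^{n,\kappa,\beta}$ with $G^{n,\kappa',\beta'}$ on the ball up to time $T$; on the limit side, \Cref{prop:monotone-coupling-GSMPGW} couples $GSMPGW(\kappa',\beta')$ with $GSMPGW(\kappa,\beta)$. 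Since the marks within one piece $A_i$ differ by at most $\delta/2$, a coupling that is exact for the finite-type problem produces marks within $\delta$ in the original problem. So it suffices to treat the finite-type case, with $S=\{1,\ldots,r\}$ and $\mu_y=\mu(\{y\})$.

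In the finite-type setting I build the coupling by breadth-first exploration up to depth $d$, starting from the $k$ uniform roots $\mathbf o_1,\ldots,\mathbf o_k$ (marked as the roots of $k$ independent $GSMPGW(\kappa,\beta)$ trees). When an explored vertex $u$ of type $x$ is processed, I reveal on the graph side, for every still-unexplored $v$ of type $y$: the $\{0,1\}$-valued status at time $0$ (a Bernoulli$(\kappa(x,y)/n)$), then the Poisson process of updating times on $[0,T]$ (rate $\beta(x,y)$) and the independent resampled statuses at each update. On the tree side I reveal the initial offspring of the corresponding vertex (a Poisson point process of intensity $\kappa(x,y)\mu_y$ per type $y$), the segmentation times (rate $\beta(x,y)$ per open edge) and the growth events (rate $\kappa(x,y)\beta(x,y)\mu_y$). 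Declare the coupling \emph{successful at $u$} if (a) every revealed neighbour $v$ is fresh, i.e.\ distinct from every previously revealed vertex (across all $k$ trees), and (b) the Binomial law of the number of type-$y$ initial neighbours matches the Poisson law with rate $\kappa(x,y)(n_y/n)$, where $n_y=\#\{i:x_i=y\}$, while the latter matches Poisson$(\kappa(x,y)\mu_y)$. By Le Cam's inequality the Binomial–Poisson step has total-variation cost $O(1/n)$, and by \eqref{eq:nu_n} the empirical rate $\kappa(x,y)n_y/n$ can be made arbitrarily close to $\kappa(x,y)\mu_y$ uniformly in the finite vertex types. The dynamics on $[0,T]$ of never-present edges contribute an independent Poisson process of rate $\beta(x,y)(1-\kappa(x,y)/n)\cdot (\kappa(x,y)/n)$ per unexplored type-$y$ vertex, whose superposition converges to a Poisson process of rate $\kappa(x,y)\beta(x,y)\mu_y$, matching the growth rate in the $GSMPGW$.

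The main obstacle, and the last stage, is controlling the size $N$ of the totally explored set. Because we track the time evolution, $N$ counts vertices touched by any edge \emph{ever} present in $[0,T]$, not just at $t=0$; this is where the $\kappa\beta\in \mathbb L^1$ assumption enters. An inductive expectation bound shows that, on the tree side, for finite-type kernels the expected number of vertices in the $GSMPGW$ ball of radius $d$ and time $T$ is finite, say at most $M=M(\kappa,\beta,d,T,k)$: each revealed vertex of type $x$ has expected initial offspring $\sum_y \kappa(x,y)\mu_y$ and expected new grown offspring $T\sum_y \kappa(x,y)\beta(x,y)\mu_y$, both finite and bounded uniformly on $S$ in the finite-type case. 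By Markov's inequality $\bP(N\ge M/\eps)\le\eps$; conditionally on $N\le M/\eps$, the probability of a collision at any exploration step is at most $\binom{N}{2}/n = O(1/n)$, and the total $\mathrm{TV}$-cost of the Binomial-to-Poisson approximation and of the empirical measure approximation is $O(N/n)+o(1)$. Sending $n\to\infty$ and then $\eps\to 0$ gives coupling success with probability $1-o(1)$, proving the theorem.
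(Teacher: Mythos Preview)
Your two-stage structure---approximate by a regular finitary kernel using \Cref{prop:monotone-coupling-GSMPGW}, then couple in the finite-type case---matches the paper's. The differences are in how each stage is executed.

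For the finite-type case the paper takes a genuinely different route. Rather than a single breadth-first pass revealing the full edge dynamics on $[0,T]$, it shows that the $k$-tuple of dynamical balls is itself a continuous-time Markov chain on $(\ST_S^{[d]})^k\cup\{\dagger\}$ (the cemetery state absorbing collisions and edge-reopenings), introduces an \emph{intermediate} Markov process sharing the jump \emph{rates} of the finite graph but the jump \emph{distributions} of the limiting tree, and then couples via a general lemma comparing Markov processes with close transition rates (\Cref{lem:coupling-Markov-process}). This decomposition separates the ``time-evolution'' error from the ``exploration at a fixed time'' error and is what produces the explicit bound of \Cref{thm:main-result-2}. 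Your direct exploration is essentially the ``temporal first'' construction the paper mentions, and it can be made to work, but as written it glosses over two events that the paper's cemetery-state construction handles explicitly: a segmented edge in the finite graph can \emph{reopen} (never in the tree), and a newly revealed vertex can connect back to an already-explored vertex other than its parent. Both have probability $O(N^2/n)$ and are harmless, but should be named.

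For the reduction, your claim that ``the edges touching the dynamical ball'' form a set of size bounded uniformly in $n$ is not correct if this means all \emph{potential} edges incident to ball vertices: there are $O(Nn)$ of those, and summing \eqref{eq:approximation-of-kernel} over them gives $\sum_{u\in\text{ball}}\frac 1 n\sum_v\big[(\kappa-\kappa')+T(\kappa\beta-\kappa'\beta')\big](x_u,x_v)$, which is $O(N)$ rather than $o(1)$ unless you also control the \emph{types} of the ball vertices. The paper sidesteps this by arguing globally: the total number of edges anywhere in $V_n$ whose $[0,T]$-dynamics differ between the $(\kappa,\beta)$- and $(\kappa_m,\beta_m)$-graphs is $o(n)$ in probability (this is \eqref{inequality:sublinear-differing-edges}), and then the uniformity of the roots together with the bounded ball size ensures that \emph{w.h.p.}\ none of these $o(n)$ differing edges is incident to any of the $k$ balls. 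That is the cleaner way to close the approximation step.
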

 
 At this point, we stress that all three theorems above are \emph{annealed}, as the dynamical inhomogeneous random graph is a random model constructed on a possibly random sequence $(x_1,\ldots,x_n)$. However, we directly deduce a similar \textit{quenched} result if we first condition on any realization of the vertex space such that all three convergence results \eqref{eq:nu_n}, \eqref{eq:graphical} and~\eqref{eq:graphical-2} hold (with then a simple limit rather than a limit in probability).
 
 We will prove Theorem \ref{thm:coupling-convergence} first in the context of finite-type kernels, \textit{i.e.} when the state space $S=\{1,\dots,r\}$ is finite, where we will actually obtain a more quantitative quenched result, in which $t$ and $d$ may depend on $n$ but not grow too quickly.
 
 \begin{thm}[Quantitative coupling for finite-type kernels]\label{thm:main-result-2}
 Suppose $S=\{1,\dots,r\}$ is finite, and $k\in \N$, and condition on a realization of the vertex space. Then there exists a finite constant $c=c(\kappa,\beta)$ depending only on the kernels $\kappa$ and $\beta$, such that for any $n\ge 0$ and choices of $t_n$ and $d_n$, we can couple the $k$ randomly rooted dynamical inhomogeneous (marked) random graphs 
 $$ \left( ( G^{n,\kappa,\beta,[d_n]}_t,\mathbf o_1 )_{0\le t\le t_n},\dots,( G^{n,\kappa,\beta,[d_n]}_t,\mathbf o_k )_{0\le t\le t_n} \right) $$
 with $k$ independent copies of $GSMPGW(\kappa,\beta)$, restricted to distance $d_n$ and time $[0,t_n]$, so that they coincide (in $\DNb^S$) with probability at least
  \begin{equation}\label{eq:failure-coupling}
 1- \frac {\left(c(1+t_n)\right)^{2 d_n+1}}{n} - \left(c(1+t_n)\right)^{d_n+1} \sum_{z\in S} \Big|\frac{n_z}{n}-\mu_z\Big| \, .
  \end{equation}
 \end{thm}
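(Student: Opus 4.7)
The plan is to couple the two sides through a joint breadth-first exploration of the $k$ dynamical balls in the finite graph and of $k$ independent copies of the tree $GSMPGW(\kappa,\beta)$, matching events in chronological-and-genealogical order. I first sample $k$ independent uniform roots $\mathbf o_1,\ldots,\mathbf o_k$ in $V_n$ and place the $k$ tree-roots accordingly; the cost of a coincidence between two of the $\mathbf o_j$ is $\bigO(k^2/n)$ and is absorbed into the first term of~\eqref{eq:failure-coupling}. I then process the already-discovered vertices in generation order up to $d_n$: for each vertex $u$ of type $x_u$ at generation $j<d_n$, I expose all events in $[0,t_n]$ producing a new neighbour at generation $j+1$. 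On the tree side, these events --- initial children of $u$ and growth events at $u$ --- decompose per target type $z\in S$ as Poisson point processes of intensity $\kappa(x_u,z)\mu_z$ at time $0$ and $\kappa(x_u,z)\beta(x_u,z)\mu_z$ on $(0,t_n]$; on the finite-graph side the analogous events --- initial edges in $G^{n,\kappa,\beta}_0$ and first openings of edges in $(0,t_n]$ --- have the same form with $\mu_z$ replaced by $\tfrac{n_z-\text{used}_z}{n}$. At each event I uniformly sample an unused type-$z$ vertex in the finite graph and attach a fresh particle in the tree; the subsequent segmentation dynamics, at rate $\beta(x_u,x_v)$ per installed edge, is identical on both sides and trivially coupled.

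The coupling can fail in exactly two ways. The \emph{intensity mismatch} between the two Poisson rates contributes, via the standard maximal coupling of Poisson point processes with nearby intensities, an error at most $\kappa(x_u,z)(1+\beta(x_u,z)t_n)\bigl(|n_z/n-\mu_z|+\text{used}_z/n\bigr)$ per pair $(u,z)$. The \emph{collision} failure --- the freshly sampled vertex in the finite graph coinciding with a vertex uncovered through another branch, another root, or at an earlier time --- occurs with probability at most $\text{used}_z/(n_z-\text{used}_z)$ per sampling step; the rare ``re-opening'' of an already-installed edge (which cannot happen in the tree) falls in the same bucket at an additional rate of order $1/n$ per installed edge.

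The key remaining estimate controls $N$, the total number of vertices uncovered by the joint exploration. Since $S$ is finite, the number of events created at any single vertex over $[0,t_n]$ is stochastically dominated by a $\Poi\bigl(c_0(1+t_n)\bigr)$ variable with $c_0=c_0(\kappa,\beta,|S|)$, and iterating the branching over $d_n$ generations (noting that the last generation dominates the geometric sum as soon as $c_0(1+t_n)$ exceeds $2$) yields
$$ N \le C\bigl(c_0(1+t_n)\bigr)^{d_n} $$
for some constant $C=C(k,\kappa,\beta,d_n)$ with only at most polynomial dependence in $d_n$, except on an event of probability $\bigO(1/n)$ that follows from standard Poisson tail bounds.

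A union bound then aggregates the failure contributions. The total collision probability is $\bigO(N^2/n)$, producing the first term of~\eqref{eq:failure-coupling} once the constant $c$ is chosen large enough (possibly depending on $k$). The total intensity-mismatch error sums to $\bigO\bigl(N(1+t_n)\sum_z|n_z/n-\mu_z|\bigr)+\bigO(N^2/n)$: its first component is $\bigO\bigl((c(1+t_n))^{d_n+1}\sum_z|n_z/n-\mu_z|\bigr)$, giving exactly the second term of~\eqref{eq:failure-coupling}, and its second is absorbed in the first. The main obstacle --- beyond calibrating $c$ to absorb the various multiplicative and polynomial-in-$d_n$ factors --- is the combinatorial bookkeeping that makes the exploration a legitimate Markovian procedure simultaneously on both sides, in particular the management of the shared vertex-pool $\{n_z-\text{used}_z\}$ across branches and roots, and the correct identification of events (initial edges, first openings, segmentations, re-openings) on the finite-graph side with the growth and segmentation dynamics of $GSMPGW(\kappa,\beta)$.
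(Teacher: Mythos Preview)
Your exploration strategy differs from the paper's in its decomposition. You run a breadth-first exploration in space, processing for each discovered vertex $u$ all its temporal events (initial children at its birth time and growth events over $(b(u),t_n]$), and couple segmentations afterwards. The paper instead treats the $k$-tuple of dynamical balls as a continuous-time Markov chain on $(\ST^{[d_n]}_S)^k\cup\{\dagger\}$, with $\dagger$ a cemetery state reached at the first collision or edge re-opening, and introduces a third \emph{intermediate} process that has the finite-graph's jump \emph{rates} but the tree's jump \emph{distribution} at growth events. This splits the error cleanly into (i) a rate comparison between the intermediate and tree processes, handled by a generic Markov-coupling lemma, and (ii) a binomial-versus-Poisson comparison at each growth time between the finite-graph and intermediate processes, handled by total-variation bounds. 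Your route is more direct and avoids the intermediate process; the paper's is more modular and makes the Markovian bookkeeping---which you rightly flag as the main obstacle---essentially automatic.

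There is however a genuine gap in your control of $N$. You claim $N\le C\bigl(c_0(1+t_n)\bigr)^{d_n}$ except on an event of probability $\bigO(1/n)$, with $C$ depending only on $k,\kappa,\beta,d_n$. But for a Poisson--Galton--Watson tree with offspring mean $\lambda\ge 2$, the renormalised population $Z_d/\lambda^d$ converges to a nondegenerate limit $W$ with merely exponential tails, so $\Prob{Z_d>C\lambda^d}\asymp e^{-cC}$ and forcing this below $1/n$ requires $C\gtrsim\log n$. The resulting $(\log n)^2$ factor in your collision bound $N^2/n$ cannot be absorbed into $(c(1+t_n))^{2d_n+1}$ uniformly in $n$ when $t_n$ and $d_n$ are bounded. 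The remedy is exactly what the paper does: drop the high-probability step and use directly the moment bounds $\E{N}\le 2\lambda_n^{d_n}$ and $\E{N^2}\le 16\lambda_n^{2d_n}$, since the collision probability is at most a constant times $\E{N^2}(1+t_n)/n$ and the intensity-mismatch error is controlled by $\E{N}$.

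A minor correction: segmentation rates are not literally identical on the two sides---in the finite graph an open edge closes at rate $\beta(x_u,x_v)\bigl(1-\kappa(x_u,x_v)/n\bigr)$ rather than $\beta(x_u,x_v)$---but the discrepancy is of the same order $\bigO(1/n)$ per edge as the re-opening events you already account for, so it is absorbed in your collision term.
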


Note that this quantitative quenched result implies the annealed result with $t_n=t$ and $d_n=d$ not depending on $n$, simply by observing that the vertex space must have
$$ \E{\sum_{z\in S} \Big|\frac{n_z}{n}-\mu_z\Big|}\to 0 \,, $$
and thus the expectation of~\eqref{eq:failure-coupling} tends to 1 as $n$ tends to infinity.

Note also that Theorem \ref{thm:main-result-2} is of course valid with the choice $t_n=0$, but if you make the natural assumption $t_n\ge 1$ then you can replace the terms $1+t_n$ by $t_n$ in~\eqref{eq:failure-coupling}. A typical application of this theorem is to answer the question of how large you can take $d_n$ and $t_n$ for the coupling to hold with high probability. For example, if the types of the vertices are first chosen randomly and taken i.i.d. with distribution $\mu$, then the term $\sum_{z\in S} \Big|\frac{n_z}{n}-\mu_z\Big|$ is typically of order $1/\sqrt n$. Thus the coupling holds \textit{w.h.p.} at least under the following two choices:
$$ \begin{array}{llll}
d_n=d &\text{fixed } &\text{ and } & t_n=o\left(n^{1/(2d+2)}\right),\ \text{ or}\\
t_n=t &\text{fixed } &\text{ and } & d_n\le \alpha \log n\ \  \text{ with small }\alpha>0 \,.
\end{array} $$

The proof will actually provide an explicit expression for the constant $c$. 
To understand the expression, observe that $\left(c(1+t_n)\right)^{d_n}$ is a typical estimate on the number of vertices contained in the dynamical ball. For each such vertex, we have to couple all the new vertices it connects to up until time $t_n$ for the two processes, which provides the second term. Moreover, for each such two vertices, we have to check that their connection has the same evolution up to time $t_n$ for the two processes (for example of non-existing if the two vertices are in two different balls), which provides the first term.

\subsection*{Sketch of Proofs}\label{subsection:strategy}

Our main results all follow from Theorem~\ref{thm:coupling-convergence} and Theorem~\ref{thm:main-result-2}.

In Section \ref{subsection:from-finite-type-general-kernel}, we prove that Theorem \ref{thm:coupling-convergence} follows from the same result with finite-type kernels, and in particular follows from Theorem \ref{thm:main-result-2}. To this end, we show that the finite graph $(G^{n,\kappa,\beta}_t)_{t\geq0}$ is well-approximated by dynamical inhomogeneous random graphs $(G^{n,\kappa_m,\beta_m}_t)_{t\geq0}$ with ``finite-type'' kernels $\kappa_m$, in the sense that we can couple the two graphs such that the dynamical balls coincide with high probability until fixed time $T\geq0$ and fixed radius $d\geq1$.
Moreover proposition \ref{prop:monotone-coupling-GSMPGW} proves that $GSMPGW(\kappa,\beta)$ is well-approximated by finite-type kernels too.
Combining these two coupling approximation gives us the desired coupling between the two processes with general graphical kernels $\kappa$ and $\beta$.

In Section \ref{subsection:coupling-finitely-many-types}, we finally establish Theorem \ref{thm:main-result-2} with an explicit coupled construction of the dynamical balls for finite $n$ and for the limiting process. The approach is computationally and notationally heavy but based on simple principles:
\begin{itemize}
	\item We introduce a third intermediate process, so as to prove separately a coupling result concerning the time-evolution of the dynamical balls, and one concerning the exploration of a dynamical ball at a fixed time, namely after revealing a new edge of the graph.
	\item For this exploration at a fixed time, we use standard tools (coupling results for binomial and Poisson random variables), already needed when dealing with static graphs.
	\item We finally deal with the time-evolution by showing that the dynamical balls are pure jump Markov processes with transition rates converging fast enough to those of the limiting process.
\end{itemize}


\section{Proofs of Main Results}\label{section:proof}

\subsection{From Finite-Type Kernels to General Graphical Kernels}\label{subsection:from-finite-type-general-kernel}

In this part, we show how we can deduce Theorem \ref{thm:coupling-convergence} from Theorem \ref{thm:main-result-2} (or rather the annealed version of Theorem \ref{thm:main-result-2}) with an approximation argument of general graphical kernels $\kappa$ and $\beta$.

\subsubsection{Regular Finitary Kernels}
 
 Suppose there exists a projection $\pi$ from $S$ to $\tilde S=\{1,\dots,r\}$, such that:
 \begin{itemize}
 \item Each $S_i := \pi^{-1}(i)$ is a $\mu$-continuity set. Then, letting $\tilde \mu=\pi \mu$ and $\tilde x_n = \pi(x_n)$, we obtain the vertex space
 $$ \tilde{\mathcal V} = ( \tilde S,\tilde \mu, (\tilde x^{(n)}_1,\dots,\tilde x^{(n)}_n)_{n\ge1} ) \, . $$
 \item There exist finite-type kernels $\tilde \kappa$ and $\tilde \beta$ on the state space $(\tilde S,\tilde \mu)$ such that for every $x$ and $y$ in $S$, we have
 $$ \kappa(x,y) = \tilde \kappa(\pi(x),\pi(y)),\qquad \beta(x,y) = \tilde\beta(\pi(x),\pi(y)) \, . $$
 \end{itemize}
 In that case, we say $\kappa$ and $\beta$ are \textbf{regular finitary kernels} with associated projection $\pi$ (following the terminology of \cite{Bollobas-Janson-Riordan}).
 
 The projection $\pi$ extends naturally to graphs with marks in $S$, simply by projecting the marks of each vertex.
 This defines in particular a projection from $\ST_S$ to $\ST_{\tilde S}$.
 It is then clear that $\pi(G^{\mathcal V,n,\kappa,\beta})$ and $G^{\tilde{\mathcal V},n,\tilde \kappa,\tilde \beta}$ have the same law, as well as $\pi(\Fo^{\kappa,\beta})$ and $\Fo^{\tilde \kappa,\tilde \beta}$.
 Thus we can apply Theorem \ref{thm:main-result-2}, or rather the annealed version discussed just after this theorem, to obtain the following proposition:
 
 \begin{prop}[Coupling for regular finitary kernels]\label{prop:coupling-regular-finitary-kernels}
 Suppose $\kappa$ and $\beta$ are regular finitary kernels with associated projection $\pi$. Fix $k\in\N$, $d\in\N$ and $T>0$.
 One can couple the $k$ dynamical balls 
 $$ \left( (G^{n,\kappa,\beta,[d]}_t,\mathbf o_n^1)_{0\le t\le T},\dots,(G^{n,\kappa,\beta,[d]}_t,\mathbf o_n^k)_{0\le t\le T} \right) $$
 with $k$ independent copies of the $GSMPGW(\kappa,\beta)$, restricted to distance $d$ and time $[0,T]$, so that their projections coincide in $\DNb^{\tilde S}$ \textit{w.h.p.} as $n\to\infty$.
 \end{prop}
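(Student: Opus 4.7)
The plan is to reduce immediately to the finite-type setting handled by \Cref{thm:main-result-2}, using the fact that projection by $\pi$ is compatible with both the dynamical inhomogeneous random graph and the growth-and-segmentation tree. Since the assertion only demands that the \emph{projected} processes coincide in $\DNb^{\tilde S}$, no finer control on the $S$-valued marks is needed.

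First I would set up the projected vertex space $\tilde{\mathcal V} = (\tilde S, \tilde\mu, (\pi(x_1^{(n)}),\dots,\pi(x_n^{(n)}))_{n\ge 1})$. The assumption that each $S_i = \pi^{-1}(i)$ is a $\mu$-continuity set, combined with~\eqref{eq:nu_n}, gives $\tilde\nu_n(\{i\}) \convprob \tilde\mu(\{i\})$ for every $i \in \tilde S$. The graphical conditions~\eqref{eq:graphical} and~\eqref{eq:graphical-2} transfer from $(\kappa,\beta)$ on $\mathcal V$ to $(\tilde\kappa,\tilde\beta)$ on $\tilde{\mathcal V}$ immediately, since $\kappa(x,y) = \tilde\kappa(\pi(x),\pi(y))$ and likewise for $\beta$. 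Thus $(\tilde\kappa,\tilde\beta)$ is a pair of graphical finite-type kernels on $\tilde{\mathcal V}$.

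Next, I would note the law identifications: $\pi(G^{\mathcal V,n,\kappa,\beta})$ has the same law as $G^{\tilde{\mathcal V},n,\tilde\kappa,\tilde\beta}$, and $\pi(\Fo^{\kappa,\beta})$ has the same law as $\Fo^{\tilde\kappa,\tilde\beta}$. This is straightforward from the definitions, using Poisson thinning for the $MPGW$ initial condition and the growth mechanism, together with the fact that segmentation rates only depend on vertex types through their $\pi$-classes. I would then invoke the annealed form of \Cref{thm:main-result-2} (valid since $\tilde S$ is finite, $k$, $d$, and $T$ are fixed, and the required convergences for $\tilde{\mathcal V}$ hold): it yields a coupling of the $k$ randomly rooted dynamical balls of $G^{\tilde{\mathcal V},n,\tilde\kappa,\tilde\beta}$ with $k$ independent copies of $GSMPGW(\tilde\kappa,\tilde\beta)$, restricted to distance $d$ and time $[0,T]$, which coincide in $\DNb^{\tilde S}$ w.h.p.

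Finally, I would lift this coupling to the $S$-valued setting. On an enlarged probability space, I construct $G^{\mathcal V,n,\kappa,\beta}$ and $k$ independent copies of $GSMPGW(\kappa,\beta)$ in the natural way, then apply the coupling above to their images under $\pi$. Since what the proposition asks for is only that the projected dynamical balls coincide in $\DNb^{\tilde S}$, this is exactly what is delivered. No real obstacle is expected: the argument is essentially a transfer lemma, with the only (mild) care needed being the verification that $\pi$ carries $\Fo^{\kappa,\beta}$ to $\Fo^{\tilde\kappa,\tilde\beta}$ in law, which is a direct Poisson coloring argument using that both kernels factor through~$\pi$.
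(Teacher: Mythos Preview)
Your proposal is correct and follows essentially the same approach as the paper: observe that $\pi(G^{\mathcal V,n,\kappa,\beta})$ and $G^{\tilde{\mathcal V},n,\tilde\kappa,\tilde\beta}$ have the same law, as do $\pi(\Fo^{\kappa,\beta})$ and $\Fo^{\tilde\kappa,\tilde\beta}$, then apply the annealed form of \Cref{thm:main-result-2} on the finite space $\tilde S$. The paper states this in two sentences immediately before the proposition, and your write-up simply fleshes out the same transfer argument with a bit more detail on verifying the graphical conditions for $\tilde{\mathcal V}$.
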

 
 Note here that in the coupling we can only ask the projections of the marks to coincide in $\tilde S$, but we cannot ask the marks themselves to coincide in $S$.
 In particular, on the state space $S$, it may seem that Proposition \ref{prop:coupling-regular-finitary-kernels} implies a convergence in distribution only in $\DGb$ and not in $\DNb^S$.
 
 However, for a given regular finitary kernels, we could modify the associated projection by refining the associated partition of $S$ into the $S_i$, and ask in this refined partition the further request that each $S_i$ be of diameter less than a given $\delta>0$, using the compactness of $S$.
 This would have the effect of providing a coupling where the associated dynamical graphs (asymptotically) not only coincide, but also have the associated marks at distance less than $\delta$.
 As this is true for any $\delta>0$, we indeed can deduce a convergence in distribution in $\DNb^S$.
 We do not detail more this approximation argument, as we will write down in the next subsection a more detailed approximation argument directly for general graphical kernels, and only need Proposition \ref{prop:coupling-regular-finitary-kernels} from the current subsection.

\subsubsection{General Graphical Kernels}

 We now consider $\kappa$ and $\beta$ graphical kernels.
 We consider an approximating partition as in \cite[Lemma 3.6]{Hofstad_volume_2}, which provides approximating kernels $(\kappa_m)_{m\ge1}$ and $(\beta_m)_{m\ge1}$ that satisfy:
 \begin{enumerate}
 \item For each $m$, the kernels $\kappa_m$ and $\beta_m$ are regular finitary.
 \item The sequence of kernels $(\kappa_m)_{m\ge1}$ (resp. $(\beta_m)_{m\ge1}$) is nondecreasing and bounded by $\kappa$ (resp. $\beta$), with for $\mu$-\textit{a.e.} $x$ and $y$ in $S$:
 $$ \kappa_m(x,y) \to \kappa(x,y), \qquad \beta_m(x,y) \to \beta(x,y) \, . $$
 \end{enumerate}
 We then have, as $\kappa$ is a graphical kernel,
 \begin{align*}
 \frac{1}{n} \sum_{u<v} \left(\frac{\kappa_m(x_u,x_v)}{n} \wedge 1\right)
 & \stackrel{(\bP)}{\underset{n,m\to \infty}{\longrightarrow}} 
 \frac{1}{2} \int\int_{S^2} \kappa(x,y)\,\mu(\de x)\mu(\de y) \, , \\
 \frac{1}{n} \sum_{u<v} \left(\frac{\kappa(x_u,x_v)}{n} \wedge 1\right)
 & \stackrel{(\bP)}{\underset{n\to \infty}{\longrightarrow}} 
  \frac{1}{2} \int\int_{S^2} \kappa(x,y)\,\mu(\de x)\mu(\de y) \, ,
 \end{align*}
 and thus also
 $$ \frac{1}{n} \sum_{u<v} \left(\frac{\kappa(x_u,x_v)}{n}\wedge 1\right) - \left(\frac{\kappa_m(x_u,x_v)}{n}\wedge1\right)
 \stackrel{(\bP)}{\underset{n,m\to \infty}{\longrightarrow}} 0 \, . $$
 Similarly,
 $$ \frac{1}{n} \sum_{u<v}^n \beta(x_u,x_v)\left(\frac{\kappa(x_u,x_v)}{n}\wedge1\right)-\beta_m(x_u,x_v)\left(\frac{\kappa_m(x_u,x_v)}{n}\wedge1\right)
  \stackrel{(\bP)}{\underset{n,m\to \infty}{\longrightarrow}} 0 \, . $$
 Using \eqref{eq:approximation-of-kernel}, we deduce that for every $T\ge0$,
 \begin{equation}\label{inequality:sublinear-differing-edges}
 \frac{1}{n} \Prob{\exists t\le T,\, uv\in E(G^{n,\kappa_m,\beta_m}_t)\Delta\, E(G^{n,\kappa,\beta}_t)}
 \underset{n,m\to\infty}{\longrightarrow} 0 \, , 
 \end{equation}
 where $\Delta$ is the notation for the symmetric difference between the two edge sets.
 In other words, when $m$ and $n$ are large, the average number of edges which can differ between the two random graphs within the time interval $[0,T]$, is bounded by a small constant times $n$.
 This result and Proposition \ref{prop:coupling-regular-finitary-kernels} for the kernels $\kappa_m$ and $\beta_m$ are sufficient to prove Theorem \ref{thm:coupling-convergence} for the kernels $\kappa$ and $\beta$, as we now explain.
 
 We fix $k\in\N$, $d\in\N$ and $T>0$.
 Moreover we let $\delta>0$ and $\eps>0$, and aim to show that for large $n$, we can couple the $k$ dynamical balls 
 $$ \left( (G^{n,\kappa,\beta,[d]}_t,\mathbf o_n^1)_{0\le t\le T},\dots,(G^{n,\kappa,\beta,[d]}_t,\mathbf o_n^k)_{0\le t\le T} \right) $$
 with $k$ copies of the $GSMPGW(\kappa,\beta)$, restricted to distance $d$ and time $[0,T]$, so that with probability larger than $1-\eps$, they coincide in $\DGb$, with marks at distance less than $\delta$ from each other.
 
 Choose $K$ large enough so that the probability that the segmented tree $\Fo^{\kappa,\beta,[d]}_T$ has more than $K$ (possibly segmented) edges, is bounded by $\eps$.
 The existence of such a $K$ is guaranteed since $(\Fo^{\kappa,\beta,[d]}_t)_{t\ge0}$ is a non-explosive Markov process on $\ST_S$.
 
 Choose $m$ large enough so that for $n$ larger than $m$, we have
 $$ \frac{1}{n} \sum_{u<v} \Prob{\exists t\le T, \: uv\in E(G^{n,\kappa_m,\beta_m}) \Delta\, E(G^{n,\kappa,\beta})} \le \eps/K \, . $$
 Then as $n\to\infty$, we have \textit{w.h.p.}
 \begin{equation}\label{eq:approximating-number-edges}
 \frac{1}{n} \sum_{u<v} \1{\exists t\le T, \: uv\in E(G^{n,\kappa_m,\beta_m}) \Delta E(G^{n,\kappa,\beta}) } \le 2\eps/K \, .
 \end{equation}
 Now, using Proposition \ref{prop:monotone-coupling-GSMPGW} and increasing the value of $m$ if necessary, we can ensure that with probability at least $1-\eps$, we have
 $$ \Fo^{\kappa_m,\beta_m,[d]}_t = \Fo^{\kappa,\beta,[d]}_t \qquad \forall t\le T \, .$$
 For these regular finitary kernels $\kappa_m$ and $\beta_m$, we can associate a projection $\pi$ on some finite set $\tilde S=\{1,\dots,r\}$ so that $\mathrm{diam}(S_i)\le \delta$ for any $i\le r$.
 Then by Proposition \ref{prop:coupling-regular-finitary-kernels}, we can ensure that for $n$ large enough, we can couple the $k$ dynamical balls 
 $$ \left( (G^{n,\kappa_m,\beta_m,[d]},\mathbf o_n^1)_{0\le t\le T},\dots,(G^{n,\kappa_m,\beta_m,[d]},\mathbf o_n^k))_{0\le t\le T} \right) $$
 with $k$ independent copies of the $GSMPGW(\kappa_m,\beta_m)$, restricted to distance $d$ and time $[0,T]$, so that with error probability less than $\eps$, they coincide in $\DGb$, with marks at distance less than $\delta$ from each other.
 In turn, with error probability less than $2\eps k$, these $k$ independent copies of $GSMPGW(\kappa_m,\beta_m)$ coincide with $k$ independent copies of $GSMPGW(\kappa,\beta)$ when restricted to distance $d$ and time $[0,T]$, and moreover none of the $k$ segmented trees has more than $K$ (possibly segmented) edges at time $T$.
 
 When further Inequality \eqref{eq:approximating-number-edges} is satisfied, the dynamical balls 
 $$ \left( (G^{n,\kappa_m,\beta_m,[d]}_t,\mathbf o_n^1)_{0\le t\le T},\dots,(G^{n,\kappa_m,\beta_m,[d]}_t,\mathbf o_n^k)_{0\le t\le T} \right) $$
 also coincide with the dynamical balls 
 $$ \left( (G^{n,\kappa,\beta,[d]}_t, \mathbf o_n^1)_{0\le t\le T},\dots,(G^{n,\kappa,\beta,[d]}_t,\mathbf o_n^k)_{0\le t\le T} \right) $$
 with error probability bounded by $2\eps k$.
 Finally, for large enough $n$, we accomplish the desired coupling with error probability less than $(2+4k)\eps$.
 \hfill\qedsymbol

\subsection{Coupling of dynamical balls for finite-type kernels}\label{subsection:coupling-finitely-many-types}
	
We finally prove Theorem \ref{thm:main-result-2}, which then implies all our main results. As it is a quenched result, we work conditionally on the vertex space, and thus consider $x_1,\ldots,x_n$ to be deterministic. 
 For $j=1,\dots,k$, let us write $(\mathcal B^{(n),j}_t)_{t\ge0} = ( (V^{(n),j}_t,E^{(n),j}_t), \mathbf o_j )_{t\ge0}$ for the dynamical ball of radius $d_n$ of the dynamical inhomogeneous random graph $(G^{n,\kappa,\beta}_t)_{t\ge0}$ centred at $\mathbf o_j$.
 Let us also write $\hat E^{(n),j}_t$ its respective accumulated edges.
 In the following, we consider
 \begin{align*}
 \Tt_\dagger = \sup\bigg\{ t\ge0,\: & \mathcal B^{(n),1}_t,\dots,\mathcal B^{(n),k}_t \text{ are disjoint segmented trees,} \\
 & \text{and no edge of } \bigcup_j \hat E^{(n),j}_t \text{ has been segmented and after reappeared} \bigg\} \, ,
 \end{align*}
 with the convention $\sup \emptyset = 0$.
 For $t<\Tt$, in the trees $(V^{(n),1}_t,\hat E^{(n),1}_t), \dots, (V^{(n),k}_t,\hat E^{(n),k}_t)$, we can order the children of each vertex by order of appearance, then by the order of their types, and then in case of ties by the natural order on the vertex set $V_n$.
 Thus we can define, for $t<\Tt_\dagger$,
 $$ F^{(n),j}_t = [ V^{(n),j}_t,\hat E^{(n),j}_t,E^{(n),j}_t ] \in \ST^{[d_n]}_S $$
 the ordered segmented tree.
 For a vertex $u\in F^{(n),j}_t\subset \mathcal U$, we write $v_u^j$ the corresponding vertex in $V^{(n),j}_t \subset V_n$, and its type $x_{v_u^j}$ will be simply written $x_u^j$. For $t\ge \Tt_\dagger$, we will simply define $(F^{(n),1}_t,\dots,F^{(n),k}_t) = \dagger$, where $\dagger$ is a cemetery state that we introduce, so that $\Tt_\dagger$ is also the hitting time of this cemetery state. 
We also write $( \Fo^{1,[d_n]}_t,\dots,\Fo^{k,[d_n]}_t)_{t\le t_n}$ for $k$ independent copies of $GSMPGW(\kappa,\beta)$ truncated at height $d_n$. In order to couple $(F^{(n),1}_t,\dots,F^{(n),k}_t)_{t\le t_n}$ with $( \Fo^{1,[d_n]}_t,\dots,\Fo^{k,[d_n]}_t)_{t\le t_n}$, we will actually see that these are Markov processes on $(\ST^{[d_n]}_S )^k \cup \{\dagger\}$ (which might still be unclear for the process $( F^{(n),1}_t,\dots,F^{(n),k}_t )$), whose transition rates are close to another. Note that if the coupling succeeds, then we have $t_n<\Tt_\dagger$, so that we will not be interested in what happens after time $\Tt_\dagger$. \\
 For $t<\Tt_\dagger$, we write $V^{(n),=d_n,j}_t$ for the vertices at distance exactly $d_n$ from the root $\mathbf o_j$ in the tree $(V^{(n),j}_t,\hat E^{(n),j}_t)$, and $V^{(n),[d_n-1],j}_t$ for the remaining vertices at distance at most $d_n-1$.
 An important observation is that every edge $uv$ with $u\in V_n \backslash \bigcup_j V^{(n),[d_n-1],j}_t$ and $v\in V_n\backslash \bigcup_j V^{(n),j}_t$, is open at time $t$ independently with probability $p_{u,v} = \kappa(x_u,x_v) / n$. This is a consequence of the independence of the evolution of these edges from the evolution of the dynamical balls up to time $t$, together with the stationarity property.
 
 Let us consider the different possible transitions modifying the process $( F^{(n),1},\dots,F^{(n),k})$ while it is in some state $(\bfo_1,\ldots,\bfo_k)$.
 \begin{enumerate}
 \item Each edge can be segmented. If $u u'$ is an unsegmented edge in $\bfo_j=F^{(n),j}$, this holds at rate $\beta(x_u^j,x^j_{u'}) (1-\kappa(x^j_u,x^j_{u'})/n)$.
 \item Each segmented edge or each pair of unconnected vertices can get connected. In that case we reach time $\Tt_\dagger$ and by construction the process jumps to the cemetary state $\dagger$. For each such edge between $u\in \bfo_j=F^{(n),j}$ and $u' \in \bfo_{j'}=F^{(n),j'}$, this holds at rate 
 $\beta(x^j_u,x^{j'}_{u'}) \kappa(x^j_u,x^{j'}_{u'}) / n$.
 \item For each $l=1,\dots,k$, for each vertex $u$ of $\bfo_l=F^{(n),[d_n-1],l}_t$ and each type $x\in S$, an edge can appear between the vertex $v_u^l\in V^{(n),[d_n-1],l}_t$ and some vertex $v\in V_n\backslash \bigcup_j V^{(n),l}$ of type $x_v=x$. This holds at rate
 \begin{align*}
 ( n_{x} - | \bigcup_{j=1}^k V^{(n),j}_t(x) | ) & \beta(x^l_u,x) \frac{\kappa(x^l_u,x)}{n} \\
 = & \beta(x^l_u,x) \kappa(x^l_u,x) \frac{n_{x}}{n}
 - \frac{\beta(x^l_u,x) \kappa(x^l_u,x)}{n} \sum_{j=1}^k |V^{(n),j}_t(x)| \, ,
 \end{align*}
 where $V^{(n),j}_t(x)$ is the set of vertices of type $x$ in $F^{(n),j}_t$.
 When this happens, the process $(F^{(n),1},\dots,F^{(n),k})$ will jump to a new random value.
 We will see that its conditional distribution depends only on the current value of the process and on $l$, $u$ and $x$, but not on the values of the vertices $v_u^l$ and $v$ in $V_n$ for example, and we will further write this conditional distribution as $\mu^{(n)}_{(\bfo_1,\dots,\bfo_k),l,u,x}$.
 \end{enumerate}
 To understand this distribution, suppose $( F^{(n),1}_{t-},\dots,F^{(n),k}_{t-} ) = (\bfo_1,\dots,\bfo_k)$ and at time $t$ an edge has been added between $v_u^l\in V^{(n),[d_n-1],l}_{t-}$ and $v\in V_n\backslash \bigcup_j V^{(n),j}_{t-}$ with $x_v=x$. The new value for $( F^{(n),1}_{t},\dots,F^{(n),k}_{t} )$ will then be:
 \begin{itemize}
 \item Either $( \bfo_1,\dots,\Gr(\bfo_l,\bt,u),\dots,\bfo_k )$ for some tree $\bt\in \T^{[d_n-|u|-1]}_S$ with root of type $x$ and height at most $d_n-|u|-1$.
 \item Or the cemetary state if $\mathcal B^{(n),1}_t,\dots,\mathcal B^{(n),k}_t$ are no more disjoint segmented trees.
 \end{itemize}
 Let us fix $\bt\in \T^{[d_n-|u|-1]}_S$ with root of type $x$ and compute the probability of having $( F^{(n),1}_{t},\dots,F^{(n),k}_{t} )=( \bfo_1,\dots,\Gr(\bfo_l,\bt,u),\dots,\bfo_k )$. To this end, we check successively for every $w\in \bt$ in the contour (or lexicographical, or depth-first) order whether the neighbourhood of the corresponding vertex in $V_n$ corresponds to that in $\bt$. This also requires some notation.
 
 Let us write $c_w(\bt)$ for the number of children of $w$ in the tree $\bt$ and $c^y_w(\bt)$ for the number of children of given type $y$. Let us write $e^y_w$ for the number of explored vertices of type $y$ when we explore $w$, including $w$. This is also the number of vertices of type $y$ which arrive no later than $w$ in $\bt$ in the contour order. Let us finally write $s^y_w$ for the number of  ``seen but unexplored'' vertices of type $y$ when we explore $w$. These seen vertices are also the children of the explored vertices, not counting the children of $w$ itself. Let us finally write $u' = u(c_u(\bfo_l)+1)$ so that we will now have $v_{u'}^l = v$. We then have
 \begin{align*}
 \mu^{(n)}_{(\bfo_1,\dots,\bfo_k),l,u,x} & ( \bfo_1,\dots,\Gr(\bfo_l,\bt,u),\dots,\bfo_k ) \\
 = & \prod_{w\in\bt} \prod_{y=1}^r \mubin_{\sum_j |\bfo_j^{=d_n,y}|+s_w^y,\frac{\kappa(x^\bt_w,y)}{n}} (0)
 \prod_{\substack{w\in\bt \\ |w|<d_n-|u|-1}} \prod_{y=1}^r \mubin_{n_y-\sum_j|\bfo_j^y|-s_w^y-e_w^y,\frac{\kappa(x^\bt_w,y)}{n}}(c_w^{y}(\bt)) \,,
 \end{align*}
 where we recall that $\mubin_{n,p}$ denotes the binomial distribution with parameters $n$ and $p$. 
 Indeed, to obtain this result (see Fig. \ref{fig:exploration}), successively for each $w\in\bt$ in the contour order, starting from $w = \emptyset_\bt$ and $v_{u'}^l = v$:
 \begin{itemize}
 \item We first check that the corresponding vertex $v_{u'w}^l$ is not connected to any other vertex in $\bigcup_{j} V^{(n),=d_n,j}_{t-}$ or in the set of seen and unexplored vertices at this time.
 This provides the first term.
 \item If $|w|<d_n-|u|-1$ and thus $|u'w|<d_n$, we check that the number of children of $v_{u'w}^l$ of type $y$ is equal to $c^{y}_w(\bt)$, which provides the second term. Moreover, if this is the case, we define accordingly $v_{u'wi}^l$ for $1\le i\le c^y_w(\bt)$.
 \end{itemize}
 
 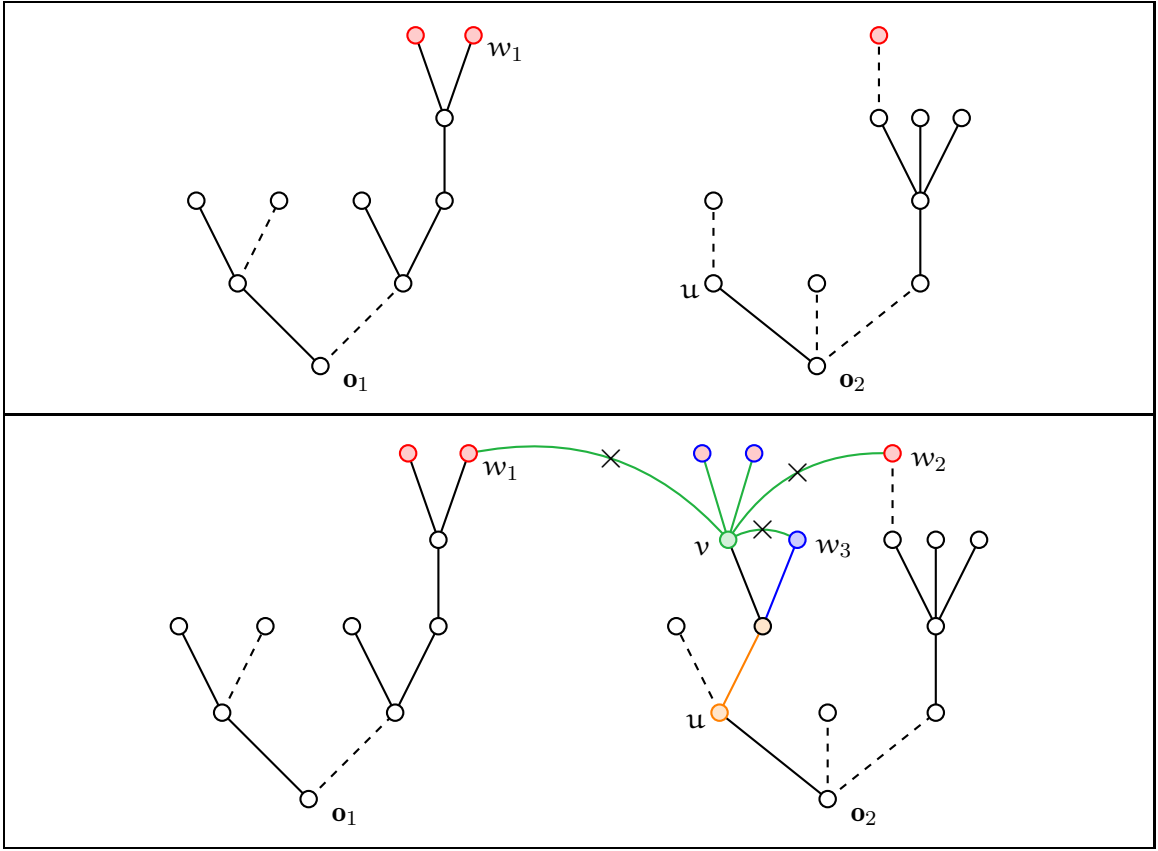
\begin{figure}[!ht]
 \centering
 \begin{tabular}{|>{\centering\arraybackslash}m{0.9\linewidth}|}
 \hline
 \medskip\smallskip
 \begin{tikzpicture}[scale=1.1]
 \draw[thick] (0,0) -- (-1,1);
 \draw[thick,dashed] (0,0) -- (1,1);
 \draw[thick] (-1,1) -- (-1.5,2);
 \draw[thick,dashed] (-1,1) -- (-0.5,2);
 \draw[thick] (1,1) -- (0.5,2);
 \draw[thick] (1,1) -- (1.5,2);
 \draw[thick] (1.5,2) -- (1.5,3);
 \draw[thick] (1.5,3) -- (1.15,4);
 \draw[thick] (1.5,3) -- (1.85,4);

 \node[cnvertex] at (0,0) {}; \draw (0.15,-0.2) node[right] {$\mathbf o_1$};
 \node[cnvertex] at (-1,1) {}; 
 \node[cnvertex] at (1,1) {}; 
 \node[cnvertex] at (-1.5,2) {};
 \node[cnvertex] at (-0.5,2) {};
 \node[cnvertex] at (0.5,2) {}; 
 \node[cnvertex] at (1.5,2) {}; 
 \node[cnvertex] at (1.5,3) {};
 \node[crrvertex] at (1.15,4) {}; 
 \node[crrvertex] at (1.85,4) {}; \draw (1.9,3.8) node[right] {$w_1$};

 \draw[thick] (6,0) -- (4.75,1);
 \draw[thick,dashed] (6,0) -- (6,1);
 \draw[thick,dashed] (6,0) -- (7.25,1);
 \draw[thick,dashed] (4.75,1) -- (4.75,2);
 \draw[thick] (7.25,1) -- (7.25,2);
 \draw[thick] (7.25,2) -- (6.75,3);
 \draw[thick] (7.25,2) -- (7.25,3);
 \draw[thick] (7.25,2) -- (7.75,3);
 \draw[thick,dashed] (6.75,3) -- (6.75,4);
 
 \node[cnvertex] at (6,0) {}; \draw (6.15,-0.2) node[right] {$\mathbf o_2$};
 \node[cnvertex] at (4.75,1) {};  \draw (4.7,0.9) node[left] {$u$};
 \node[cnvertex] at (6,1) {};
 \node[cnvertex] at (7.25,1) {};
 \node[cnvertex] at (4.75,2) {};
 \node[cnvertex] at (7.25,2) {}; 
 \node[cnvertex] at (6.75,3) {};
 \node[cnvertex] at (7.25,3) {}; 
 \node[cnvertex] at (7.75,3) {}; 
 \node[crrvertex] at (6.75,4) {};
 \end{tikzpicture}
 \\
 \hline
\medskip
 \begin{tikzpicture}[scale=1.15]
 \draw[thick] (0,0) -- (-1,1);
 \draw[thick,dashed] (0,0) -- (1,1);
 \draw[thick] (-1,1) -- (-1.5,2);
 \draw[thick,dashed] (-1,1) -- (-0.5,2);
 \draw[thick] (1,1) -- (0.5,2);
 \draw[thick] (1,1) -- (1.5,2);
 \draw[thick] (1.5,2) -- (1.5,3);
 \draw[thick] (1.5,3) -- (1.15,4);
 \draw[thick] (1.5,3) -- (1.85,4);
 
 \draw[thick,Mygreen] (4.85,3) edge [bend left] node[midway,black] {\Large$\times$} (5.65,3);
 \draw[thick,Mygreen] (4.85,3) edge [bend left] node[midway,black] {\Large$\times$} (6.75,4);
  \draw[thick,Mygreen] (4.85,3) edge [bend right] node[midway,black] {\Large$\times$} (1.85,4);

 \node[cnvertex] at (0,0) {}; \draw (0.15,-0.2) node[right] {$\mathbf o_1$};
 \node[cnvertex] at (-1,1) {}; 
 \node[cnvertex] at (1,1) {}; 
 \node[cnvertex] at (-1.5,2) {};
 \node[cnvertex] at (-0.5,2) {};
 \node[cnvertex] at (0.5,2) {}; 
 \node[cnvertex] at (1.5,2) {}; 
 \node[cnvertex] at (1.5,3) {};
 \node[crrvertex] at (1.15,4) {}; 
 \node[crrvertex] at (1.85,4) {}; \draw (1.9,3.8) node[right] {$w_1$};
 
 \draw[thick] (6,0) -- (4.75,1);
 \draw[thick,dashed] (6,0) -- (6,1);
 \draw[thick,dashed] (6,0) -- (7.25,1);
 \draw[thick,dashed] (4.75,1) -- (4.25,2);
 \draw[thick,orange] (4.75,1) -- (5.25,2);
 \draw[thick] (7.25,1) -- (7.25,2);
 \draw[thick] (7.25,2) -- (6.75,3);
 \draw[thick] (7.25,2) -- (7.25,3);
 \draw[thick] (7.25,2) -- (7.75,3);
 \draw[thick,dashed] (6.75,3) -- (6.75,4);
 \draw[thick,black] (5.25,2) -- (4.85,3);
 \draw[thick,blue] (5.25,2) -- (5.65,3);
 \draw[thick,Mygreen] (4.85,3) -- (4.55,4);
 \draw[thick,Mygreen] (4.85,3) -- (5.15,4);
 
 \node[cnvertex] at (6,0) {}; \draw (6.15,-0.2) node[right] {$\mathbf o_2$};
 \node[covertex] at (4.75,1) {};  \draw (4.7,0.9) node[left] {$u$};
 \node[cnvertex] at (6,1) {};
 \node[cnvertex] at (7.25,1) {};
 \node[cnvertex] at (4.25,2) {}; 
 \node[cnvertex] at (7.25,2) {}; 

 \node[cnvertex] at (6.75,3) {};
 \node[cnvertex] at (7.25,3) {}; 
 \node[cnvertex] at (7.75,3) {}; 
 \node[crrvertex] at (6.75,4) {}; \draw (6.85,3.9) node[right] {$w_2$};
 \node[cnovertex] at (5.25,2) {};
 \node[cvvertex] at (4.85,3) {}; \draw (4.75,2.9) node[left] {$v$};
 \node[cbbvertex] at (5.65,3) {}; \draw (5.75,2.9) node[right] {$w_3$};
 \node[cbrvertex] at (4.55,4) {};
 \node[cbrvertex] at (5.15,4) {};
 \end{tikzpicture}
 \\
 \hline
 \end{tabular}
 \caption{Illustration of one step of the exploration process at a growing time. On the top are the two balls just before that growing time. On the bottom a new component is grown at $u$ (orange edge). Blue vertices correspond to stack vertices. Green edges with black cross (not all displayed) are edges checked to be not connected to $v$ when the vertex $v$ (in green) is explored. Black vertices correspond to the explored vertices. Red vertices correspond to those at distance $d$ from its corresponding root. The unseen vertices are not displayed for a better visibility.}\label{fig:exploration}
 \end{figure}
 Finally, the fact that $\mu^{(n)}_{(\bfo_1,\dots,\bfo_k),l,u,x}$ is a probability measure allows to write
 $$ \mu^{(n)}_{(\bfo_1,\dots,\bfo_k),l,u,x}(\dagger) = 1 - \sum_{\bt\in\T^{[d_n-|u|-1]}_S} \mu^{(n)}_{(\bfo_1,\dots,\bfo_k),l,u,x}(f_1,\ldots, \Gr(\bfo_l,\bt,u),\ldots, f_k ) \, . $$
 
 At this point, we have proven that 
 $(F^{(n),1}_t, \ldots, F^{(n),k}_t)$ 
 is indeed markovian and we have provided its transition rates.  
 We will compare these rates with those of $( \Fo^{1,[d_n]},\dots,\Fo^{k,[d_n]} )$, which we can simply make explicit as these are $k$-independent copies of $GSMPGW(\kappa,\beta)$, truncated at height $d_n$. When this process is in state $(\bfo_1,\ldots,\bfo_k)\in\left(\ST^{[d_n]}_S\right)^k$, these rates are given by:
 \begin{enumerate}
 \item Each edge $uu'$ in some $\bfo_j$ is segmented at rate $\beta(x^j_u,x^j_{u'})$.
 \item For each $l=1,\dots,k$, for each vertex $u$ of $\bfo_l=F^{(n),[d_n-1],l}_t$ and each type $x\in S$, a $MPGW(\kappa,x)$ (truncated at height $d_n-|u|-1$) is grown at $u$ at rate $\kappa(x^l_u,x) \beta(x^l_u,x)\mu_x$. Calling  $\mu^{d_n}_{(\bfo_1,\dots,\bfo_k),l,u,x}$ the law of the $k$-tuple of trees after this growth, we thus have for any tree $\bt\in\T^{[d_n-|u|-1]}_S$ with root of type $x$:
 \begin{align*}
 \mu^{d_n}_{(\bfo_1,\dots,\bfo_k),l,u,x}(\bfo_1,\dots,\Gr(\bfo_l,\bt,u),\dots,\bfo_k)
 &= \Prob{MPGW(\kappa,x)^{[d_n-|u|-1]}=\bt}\\
 &= \prod_{\substack{w\in\bt\\|w|<d_n-|u|-1}} \prod_{y=1}^r \mupo_{\kappa(x^\bt_w,y)\mu_y}(c_w^{y}(\bt)),
 \end{align*}
 where we recall that $\mupo_\lambda$ denotes the Poisson law with parameter $\lambda$.
 \end{enumerate}
 
 In order to couple the Markov processes $( F^{(n),1},\dots,F^{(n),k} )$ and $( \Fo^{1,[d_n]},\dots,\Fo^{k,[d_n]} )$, it will be convenient to introduce a third continuous-time Markov process $( \tilde F^{(n),1},\dots,\tilde F^{(n),k} )$, with initial condition $( \tilde F^{(n),1}_0,\dots,\tilde F^{(n),k}_0 ) = ( \Fo^{1,[d_n]}_0,\dots,\Fo^{k,[d_n]}_0 )$, and with the same jump rates as those of $( F^{(n),1},\dots,F^{(n),k} )$, except that when a new vertex of type $x$ is added to one of the segmented tree $\tilde F^{(n),l}$, we use the distribution $\mu^{d_n}_{(\bfo_1,\dots,\bfo_k),l,u,x}$ instead of $\mu_{(\bfo_1,\dots,\bfo_k),l,u,x}^{(n)}$ to determine the state of the process.
 To this end, we will show that it is unlikely that the segmented trees contain too many vertices.
 
 For given $(\bfo_1,\dots,\bfo_k)$, the sum over $(\bfo_1',\dots,\bfo_k')$ of the absolute value of the differences between the transition rates from $(\bfo_1,\dots,\bfo_k)$ to $(\bfo_1',\dots,\bfo_k')$ for the Markov chains $(\Fo^{1,[d_n]},\dots,\Fo^{k,[d_n]})$ and $(\tilde F^{(n),1},\dots,\tilde F^{(n),k})$ is bounded by
 \begin{align*} \max_{x,y\in S}\beta(x,y)\kappa(x,y) \Big( \frac{\mathbf f(\mathbf f-1)}{2n} &+ \frac{\mathbf f^2}{n} +\mathbf f \sum_{z\in S}\left|\frac{n_z}{n}-\mu(z)\right| \Big) \\
 & \le \max_{x,y \in S} \kappa(x,y)\beta(x,y) \left(\frac {3\mathbf f^2}{2n}+\mathbf f \sum_{z\in S} \Big|\frac{n_z}{n}-\mu_z\Big|\right) \, ,
 \end{align*}
 where $\mathbf f = |\bfo_1|+\dots+\bfo_k|$. \\
 The first term with 
 $\mathbf f(\mathbf f-1)/2n$ comes from comparing, for each unordered pair of vertices $u$ and $u'$ of $\bfo_1,\ldots, \bfo_k$, the rate at which the edge is segmented if $uu'$ is the edge of some $\bfo_j$, or the rate at which it appears otherwise (in $V_n$, leading to the process $(\tilde F^{(n),1},\dots,\tilde F^{(n),k})$ jump to the cemetery state). The second and third term come from comparing the rates of growth.

 Therefore, from Lemma \ref{lem:coupling-Markov-process} we can couple the two processes so that
 \begin{align}\label{eq:couplage1}
 \bP\Big( \exists t\le t_n, & \big(\tilde F^{(n),1}_t,\dots,\tilde F^{(n),k}_t\big) \neq \big(\Fo^{1,[d_n]}_t,\dots,\Fo^{k,[d_n]}_t\big) \Big) \nonumber \\
 &\le t_n \max_{x,y}\kappa(x,y)\beta(x,y) \bigg( \frac{3\E{\mathbf Z_n^2}}{2n} + \E{\mathbf Z_n} \sum_{z\in S}\Big|\frac{n_z}{n}-\mu_z\Big| \bigg) \, , 
 \end{align}
where
\begin{equation}
\mathbf Z_n := \left|\Fo^{1,[d_n]}_{t_n}\right|+ \ldots+  \left|\Fo^{k,[d_n]}_{t_n}\right|.
\end{equation}

 It remains to couple the two processes $(F^{(n),1},\dots,F^{(n),k})$ and $(\tilde F^{(n),1},\dots,\tilde F^{(n),k})$ up to time $t_n$.
 To this end, we first observe that the $k$ roots of $(F^{(n),1},\dots,F^{(n),k})$ correspond to distinct vertices in $V_n$ and have types coupled with those of  $(\tilde F^{(n),1},\dots,\tilde F^{(n),k})$ with failure probability roughly bounded by
 $$ k\left(\frac k n +\sum_{z\in S} \Big|\frac{n_z}{n}-\mu_z\Big| \right) \,. $$

 Next, since two edges do not flip simultaneously, it suffices to couple the growing operations encoded with the distributions $\mu^{(n)}_{(\bfo_1,\dots,\bfo_k),l,u,x}$ and $\mu^{d_n}_{(\bfo_1,\dots,\bfo_k),l,u,x}$.
 We thus couple successively the offspring of each encountered vertex, that we have ordered according to their order of appearance, then (in case of ties) according to the index of the tree between $1$ and $k$, and finally, for each grown tree, according to the contour order.\\
 Suppose the $j-$th (appeared) vertex is $u$ in $F^{(n),l}$ (and in $\tilde F^{(n),l}$ as well), and the number of revealed vertices before we reveal the offspring of $u$ is $n(j)$.
 When exploring its offspring, we have to consider:
 \begin{itemize}
 \item A family of Poisson random variables $(P^u_y)_{y\in S}$ with parameters $\kappa(x_u,y) \mu_y$ yielding its number of children of type $y$, in the case of $\tilde F^{(n),l}$ (and when $u$ is not already at maximal depth $d_n$).
 \item A family of binomial random variables $(B^u_y)_{y\in S}$ and $(B'^u_y)_{y\in S}$ with parameters $(b_y^u,\kappa(x_u,y)/n)$ and $(n_y-b'^u_y,\kappa(x_u,y)/n)$ for some $b_y^u$ and $b'^u_y$ satisfying $\sum_y b_y^u\le n(j)$ and $\sum_y b'^u_y\le n(j)$, and yielding:
 \begin{itemize}
 \item the number of connections to previously explored and revealed vertices of type $y$ in $(F^{(n),1},\dots,F^{(n),k})$,
 \item the number of children of type $y$ (when $u$ not already at maximal depth $d_n$)
 \end{itemize}
 in the case of $F^{(n),l}$.
 \end{itemize}
 The total variation distance between a binomial $\mubin_{n,p}$ distribution and a Poisson $\mupo_{np}$ distribution with the same mean is at most $p$ (see for instance \cite{Barbour-Holst-Janson}). Thus all the $B_y^u$ are $0$ with failure probability at most $\sum_y \kappa(x_u,y) b^u_y/n$, and we can couple each $B'^u_y$ with $P^u_y$ with failure probability at most
 \begin{align*}
 & d_{TV}( \mubin_{n_y-b'^v_y,\kappa(x_v,y)/n},\mupo_{\kappa(x_v,y)\mu_y} ) \\
 & \le d_{TV}( \mubin_{n_y-b'^v_y,\kappa(x_v,y)/n},\mubin_{n_y,\kappa(x_v,y)/n)} ) + d_{TV}( \mubin_{n_y,\kappa(x_v,y)/n},\mupo_{\kappa(x_v,y)\mu_y} ) \\
 & \le  \kappa(x_v,y) \frac{b'^v_y}{n} + d_{TV}( \mubin_{n_y,\kappa(x_v,y)/n},\mupo_{\kappa(x_v,y)n_y/n} ) + d_{TV}( \mupo_{\kappa(x_v,y) n_y/n},\mupo_{\kappa(x_v,y)\mu_y} ) \\
 & \le \kappa(x_v,y) \Big( \frac{ b'^v_y+1}{n} + \left| \frac{n_y}{n} - \mu_y \right| \Big) \, .
 \end{align*}
 Hence, all these couplings succeed for vertex $u$ with failure probability bounded by 
 $$ \max_{x,y \in S} \kappa(x,y) \left(\frac {2 n(j) + |S|}n+ \sum_{z\in S} \Big|\frac{n_z}{n}-\mu_z\Big|\right) \,. $$
 We repeat this for all the vertices encountered until time $t_n$. 
 On the event that the first coupling succeeded, the number of these encountered vertices is
$$ \left|\tilde F^{(n),1}_{t_n}\right|+ \ldots+  \left|\tilde F^{(n),k}_{t_n}\right|=   \left|\Fo^{1,[d_n]}_{t_n}\right|+ \ldots+  \left|\Fo^{k,[d_n]}_{t_n}\right| = \mathbf Z_n \,. $$
 For $j\le \mathbf Z_n$ we also have $n(j)\le \mathbf Z_n$, and therefore this second coupling fails with  probability bounded by
 \begin{equation}\label{eq:couplage2}
 \left(1+\max_{x,y \in S}\kappa(x,y) \right) \left(\frac {2\,\E{ \mathbf Z_n^2} + |S|\, \E{\mathbf Z_n}}n+ \E{\mathbf Z_n} \sum_{z\in S} \Big|\frac{n_z}{n}-\mu_z\Big|\right) \, , 
 \end{equation}
 and in particular the probability of $ T_\dagger<t_n$ is bounded by the same quantity.
 
 Combining \eqref{eq:couplage1} and \eqref{eq:couplage2}, we finally obtain that we can couple the three processes $(F^{(n),1}_t,\dots,F^{(n),k}_t)$, $(\tilde F^{(n),1}_t,\dots,\tilde F^{(n),k}_t)$ and $(\Fo^{1,[d_n]}_t,\dots,\Fo^{k,[d_n]}_t)$ up until time $t_n$ and with failure probability bounded by:
 \begin{equation}\label{eq:explicit-failure-coupling}
 \left(1+\max_{x,y \in S}\kappa(x,y)( 1+\beta(x,y)t_n ) \right) \left(\frac {2\,\E{ \mathbf Z_n^2} + |S|\, \E{\mathbf Z_n}}n+ \E{\mathbf Z_n} \sum_{z\in S} \Big|\frac{n_z}{n}-\mu_z\Big|\right) \,.
 \end{equation}
 To conclude Theorem \ref{thm:main-result-2}, it remains to prove that the first two moments of $\mathbf Z_n$ can be bounded by $\left(c(1+t_n)\right)^{d_n}$ and $\left(c(1+t_n)\right)^{2d_n}$ respectively, for some constant $c>0$. It clearly suffices to prove this when there is only one ball, so we suppose $k=1$.
 Then we can roughly bound $\mathbf Z_n$ by the number $\tilde {\mathbf Z}_n$ of vertices at distance at most $d_n$ from the root of a $PGW$ tree with parameter $\lambda_n=1+\max_{x,y\in S} \kappa(x,y) (1+\beta(x,y) t_n)$, and we can assume $\lambda_n\ge 2$ by increasing its value if necessary. 
 From a classic analysis of Galton-Watson processes (see for instance \cite{Harris-branching1,Harris-branching2,Athreya-Ney}), we get the following estimates
 \begin{align*}
\lambda_n^{d_n}&\le \E{\tilde {\mathbf Z}_n} = \frac{\lambda_n^{d_n+1}-1}{\lambda_n-1}\le \frac {\lambda_n}{\lambda_n-1} \lambda_n^{d_n}\le 2 \lambda_n^{d_n},\\
 \lambda_n^{2 d_n}&\le \E{(\tilde {\mathbf Z}_n)^2} \le 2 \left(\frac {\lambda_n}{\lambda_n-1}\right)^3 \lambda_n^{2 d_n}\le 16 \lambda_n^{2 d_n},
 \end{align*}
and this concludes the proof.
 \hfill\qedsymbol \\
 
 As final remarks, we stress that  we clearly could have provided better estimates of the moments of $\mathbf Z_n$, for example as most vertices in the dynamical balls are added at times close to $t_n$ and have less time to generate offspring than the initial root vertices. We also could have obtained the same formula as~\eqref{eq:explicit-failure-coupling} with $\E{\mathbf Z_n\wedge \omega_n}$ instead of $\E{\mathbf Z_n}$, with arbitrary choice of $\omega_n$, simply by stopping the process when $\mathbf Z_n$ exceeds $\omega_n$, but at the cost of adding an error term $\Prob{\mathbf Z_n>\omega_n}$.


\section{Vertex Updating Inhomogeneous Random Graph}\label{section:vertex-updating}

 In this section, we discuss briefly \textbf{vertex updating inhomogeneous random graphs}, a model of dynamical graph with a different kind of dynamics where several edges can be updated simultaneously. 
 More precisely, we consider as before a vertex space $\mathcal V = (S,\mu,(x_1^{(n)},\dots,x_n^{(n)})_{n\ge1})$ and a \textbf{connection kernel} $\kappa:S\times S\to [0,\infty)$, but we now consider an \textbf{updating function} $\beta:S\to[0,\infty)$ which is a function of only one variable. 
 The dynamic is then the following: all edges adjacent to a vertex are updated simultaneously at rate depending on the type of the vertex. Upon updating vertex looses all its connections, and new connections are built independently. See Fig. \ref{fig:vertex_updating}.\\
 
 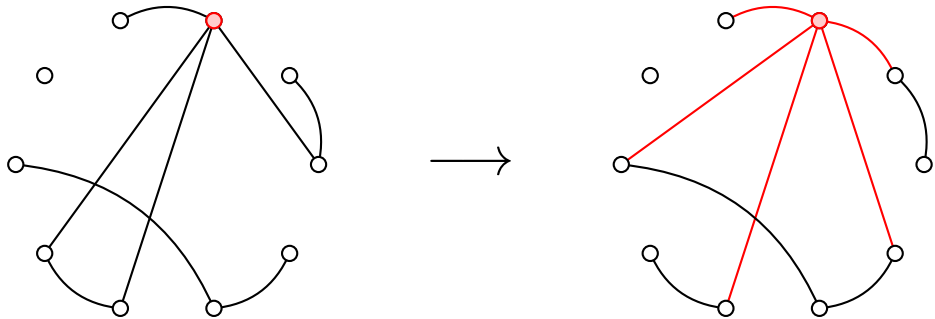
\begin{figure}[!ht]
 \centering
 \begin{tikzpicture}[scale=1.15]
 \path[thick] ({360/10 * (1 - 1)}:1.75) edge [bend right] ({360/10 * (2 - 1)}:1.75);
 \draw[thick] ({360/10 * (1 - 1)}:1.75) -- ({360/10 * (3 - 1)}:1.75);
 \draw[thick] ({360/10 * (3 - 1)}:1.75) edge [bend right] ({360/10 * (4 - 1)}:1.75);
 \draw[thick] ({360/10 * (3 - 1)}:1.75) -- ({360/10 * (7 - 1)}:1.75);
 \draw[thick] ({360/10 * (3 - 1)}:1.75) -- ({360/10 * (8 - 1)}:1.75);
 \path[thick] ({360/10 * (6 - 1)}:1.75) edge [bend left] ({360/10 * (9 - 1)}:1.75);
 \path[thick] ({360/10 * (7 - 1)}:1.75) edge [bend right] ({360/10 * (8 - 1)}:1.75);
 \path[thick] ({360/10 * (9 - 1)}:1.75) edge [bend right] ({360/10 * (10 - 1)}:1.75);
 \foreach \s in {1,...,10}
 { \draw [black, thick, fill=white] ({360/10 * (\s - 1)}:1.75) circle (2.5pt); }
 \draw [red, thick, fill=red!20] ({360/10 * (3-1)}:1.75) circle (2.5pt);
 
 \node [] at (3.5,0) {\huge $\longrightarrow$};
 
 \begin{scope}[xshift=7cm]
 \path[thick] ({360/10 * (1 - 1)}:1.75) edge [bend right] ({360/10 * (2 - 1)}:1.75);
 \path[thick,red] ({360/10 * (3 - 1)}:1.75) edge [bend left] ({360/10 * (2 - 1)}:1.75);
 \draw[thick,red] ({360/10 * (3 - 1)}:1.75) edge [bend right] ({360/10 * (4 - 1)}:1.75);
 \draw[thick,red] ({360/10 * (3 - 1)}:1.75) -- ({360/10 * (6 - 1)}:1.75);
 \draw[thick,red] ({360/10 * (3 - 1)}:1.75) -- ({360/10 * (8 - 1)}:1.75);
 \draw[thick,red] ({360/10 * (3 - 1)}:1.75) -- ({360/10 * (10 - 1)}:1.75);
 \path[thick] ({360/10 * (6 - 1)}:1.75) edge [bend left] ({360/10 * (9 - 1)}:1.75);
 \path[thick] ({360/10 * (7 - 1)}:1.75) edge [bend right] ({360/10 * (8 - 1)}:1.75);
 \path[thick] ({360/10 * (9 - 1)}:1.75) edge [bend right] ({360/10 * (10 - 1)}:1.75);
 \foreach \s in {1,...,10}
 { \draw [black, thick, fill=white] ({360/10 * (\s - 1)}:1.75) circle (2.5pt); }
 \draw [red, thick, fill=red!20] ({360/10 * (3-1)}:1.75) circle (2.5pt);
 \end{scope}
 \end{tikzpicture}
 \caption{One move of the dynamic of the dynamical inhomogeneous random graph with vertex updating. Red vertex on the left is the one chosen to be updated. The red edges on the right are the newly formed edges.}\label{fig:vertex_updating}
 \end{figure}
 
  More formally, given the sequence $x_1,\dots,x_n$, $\big(H^{n,\kappa,\beta}_t\big)_{t\ge0}$ is the random dynamical graph on $V_n$, such that any two vertices $u$ and $v$ are initially connected by an edge independently of the others and with a probability
 $$ p_{u,v} = \frac{1}{n} \kappa(x_u,x_v) \wedge 1, $$
 and each vertex $u$ updates independently with rate $\beta_{u} = \beta(x_u)$.
 That is, after an exponential time with expectation $1/\beta_{u}$, every unordered pair $\{u,v\}$, for $u\neq v$ forms an edge with probability $p_{u,v}$ independently of its previous state and all other edges. The remaining edges $vw$ with $v,w\neq u$ remain unchanged.
 
 We obtain the local limit of these dynamical graphs in a similar manner as for the dynamical inhomogeneous random graphs. 
 In order to describe the limiting model, we need the growing operation $\Gr(\br,\bs,u)$ which was introduced in Section~\ref{subsubsection:GST}, as well as a new operation, which we write $\Fu(\br,\bs,u)$, and which is the tree obtained by merging the root of the tree $\bs$ with the vertex $u$ of $r$, assuming this root and $u$ have the same type.
 
 \begin{defi}
 	The random growth-and-segmentation tree $(\Ho^{\kappa,\beta}_t)_{t\ge0}$ is called \textbf{vertex-growth-and-segmentation multitype Poisson-Galton-Watson tree} with edge and vertex kernels $\kappa$ and $\beta$, and denoted by $VGSMPGW(\kappa,\beta)$ if it is a Markov process on $\ST_S$ where:
 	\begin{enumerate}
 		\item $\Ho^{\kappa,\beta}_0$ is distributed as a $MPGW(\kappa)$,
 		\item for each vertex $u\in\Ho^{\kappa,\beta}_t$, at rate $\beta(x_u)$, all its incident edges are segmented, and a new $MPGW(\kappa,x_u)$ is merged at $u$,
 		\item at each vertex $u\in\Ho^{\kappa,\beta}_t$, a $MPGW(\kappa,y)$ is grown at rate $\kappa(x_u,y)\beta(y)\,\mu(\de y)$.
 	\end{enumerate}
 \end{defi}
 Similarly as before, this defines a strong Markov process on $\ST_S$ with c\`adl\`ag trajectories. The reason why we now have two different growing operations, is that the dynamical ball can grow either by the update of one of its own vertices, leading to a merging operation, or by the update of one external vertex that then connects to one vertex of the ball, leading to a growing operation.
 
 All the results of Section \ref{section:main-results} can be adapted to these new settings, in a similar way that we do not detail. We just state the rephrasing of the first and last result in these new settings.

 \begin{thm}[Local weak convergence in probability]\label{thm:main-result-0-vertex}
 	As $n$ tends to infinity, the (marked) vertex updating inhomogeneous random graph $\big(H^{n,\kappa,\beta}_t\big)_{t\ge0}$ converges locally in probability to (the distribution of) the $VGSMPGW(\kappa,\beta)$.
 \end{thm}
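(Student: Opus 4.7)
The plan is to mimic the proof strategy of \Cref{thm:main-result-0}. By the obvious analog of \Cref{lem:joint-local-cv-to-local-in-proba}, local convergence in probability is equivalent to the joint convergence in distribution of the $k=2$ dynamical components of $H^{n,\kappa,\beta}$ rooted at two independent uniform roots $\mathbf o_1,\mathbf o_2 \in V_n$ towards two independent copies of $VGSMPGW(\kappa,\beta)$. By \Cref{remark_Dynamical_components_or_balls}, this further reduces to the following coupling statement: for any $d\ge 1$, $T\ge 0$ and $\delta>0$, one can couple $(H^{n,\kappa,\beta,[d]}_t,\mathbf o_j)_{t\le T}$, $j=1,2$, with two independent truncated copies of $VGSMPGW(\kappa,\beta)^{[d]}$ so that with high probability the two dynamical graphs agree in $\DGb$ up to marks at distance $\delta$. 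As for the edge-updating model, I would first establish a quenched quantitative version in the finite-type case (the vertex-updating analog of \Cref{thm:main-result-2}) and then extend to general graphical kernels by the regular finitary approximation of \Cref{subsection:from-finite-type-general-kernel}.

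Assume now $S=\{1,\dots,r\}$ finite and condition on the vertex space. Following \Cref{subsection:coupling-finitely-many-types}, stop at the first time $T_\dagger$ at which the $k$ dynamical balls cease to be pairwise disjoint segmented trees, or at which an accumulated edge reappears after being segmented. On $[0,T_\dagger)$, the ordered balls $(F^{(n),1}_t,\dots,F^{(n),k}_t)\in(\ST^{[d_n]}_S)^k$ form a continuous-time Markov chain whose two non-cemetery jump mechanisms match exactly those defining $VGSMPGW$: a \emph{refresh transition}, in which a vertex $u$ of some $F^{(n),l}$ updates at rate $\beta(x_u^l)$, segmenting every incident edge and reconnecting independently to each other $w\in V_n\setminus\{v_u^l\}$ with probability $\kappa(x_u^l,x_w)/n$ (producing, via the operation $\Fu$, a freshly merged subtree at $u$); and a \emph{growth transition}, in which for each vertex $u$ in $F^{(n),l}$ at depth $<d_n$ and each external vertex $v$ of type $y$, a new child of $u$ appears at rate $\beta(y)\kappa(x_u^l,y)/n$, triggering the recursive exploration of the new dangling component rooted at $v$. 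In both cases the children of each newly revealed vertex $v$ of type $x$ are independent binomials $\mubin_{n_y-\dots,\kappa(x,y)/n}$, to be compared, via the standard bound $d_{TV}(\mubin_{N,p},\mupo_{Np})\le p$, with the $\mupo_{\kappa(x,y)\mu_y}$ offspring of a fresh $MPGW(\kappa,x)$. As in \Cref{subsection:coupling-finitely-many-types}, I would introduce an intermediate Markov process with the same jump rates as $(F^{(n),1},\dots,F^{(n),k})$ but with Poisson offspring, couple it with $(\Ho^{1,[d_n]},\dots,\Ho^{k,[d_n]})$ through \Cref{lem:coupling-Markov-process} (rate-comparison), and then couple it with $(F^{(n),1},\dots,F^{(n),k})$ through the total-variation bound above applied at each newly explored vertex. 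A standard Galton-Watson moment bound of the form $\E{|\Ho^{\kappa,\beta,[d_n]}_{t_n}|^2}\le (c(1+t_n))^{2d_n}$ then produces the quantitative bound \eqref{eq:failure-coupling} verbatim, with a new explicit constant $c=c(\kappa,\beta)$. The extension to arbitrary graphical kernels is then copy-paste of \Cref{subsection:from-finite-type-general-kernel}, after noting that the obvious analog of \Cref{prop:monotone-coupling-GSMPGW} still provides a monotone coupling of $VGSMPGW(\kappa_m,\beta_m)$ with $VGSMPGW(\kappa,\beta)$.

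The main obstacle, compared with the edge-updating setting, comes from the fact that a refresh transition updates several edges simultaneously: when an internal vertex $u$ refreshes, every edge incident to $u$ is segmented and a brand new neighbourhood is drawn from $V_n$ in a single step, which must be coupled at once with the Poisson neighbourhood of the freshly merged $MPGW(\kappa,x_u)$ while also checking in one shot that none of the newly revealed vertices collides with previously explored portions of the balls (which would otherwise send the process to $\dagger$). A minor additional bookkeeping issue is that the limiting external growth rate $\kappa(x_u,y)\beta(y)\mu_y$ is now weighted by the refresh rate $\beta(y)$ of the \emph{outside} vertex rather than by a symmetric edge rate; consequently the rate-comparison step must be rewritten with this asymmetric weighting, but without any essential difficulty. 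All other ingredients (the binomial-Poisson coupling, the moment bounds on truncated trees, and the regular finitary approximation) carry over without modification from \Cref{section:proof}.
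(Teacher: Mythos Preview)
Your proposal is correct and follows essentially the same approach as the paper, which in fact does not give a detailed proof at all: it simply states that ``all the results of \Cref{section:main-results} can be adapted to these new settings, in a similar way that we do not detail'', and then records \Cref{thm:main-result-0-vertex} and \Cref{thm:main-result-2-vertex} as the rephrasing of the first and last results. Your outline is precisely such an adaptation---reducing via \Cref{lem:joint-local-cv-to-local-in-proba} and \Cref{remark_Dynamical_components_or_balls} to a coupling of dynamical balls, proving a quantitative finite-type version by comparing Markov transition rates through an intermediate process and \Cref{lem:coupling-Markov-process}, and then passing to general graphical kernels by regular finitary approximation---and you have correctly identified the only genuinely new features, namely the simultaneous refresh transition requiring the merge operation $\Fu$ and the asymmetric weighting $\kappa(x_u,y)\beta(y)$ in the external growth rate.
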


 \begin{thm}[Coupling for the finite-type kernels]\label{thm:main-result-2-vertex}
 	Suppose $S=\{1,\dots,r\}$ is finite, and $k\in \N$. Then there exists a finite constant $c=c(\kappa,\beta)$ depending only on the kernels $\kappa$ and $\beta$, such that for any $n\ge 0$ and choices of $t_n$ and $d_n$, we can couple the $k$ randomly rooted dynamical balls
 	$$ \left( ( H^{n,\kappa,\beta,[d_n]}_t,\mathbf o_1 )_{0\le t\le t_n},\dots,( H^{n,\kappa,\beta,[d_n]}_t,\mathbf o_k )_{0\le t\le t_n} \right) $$
 	with $k$ independent copies of $VGSMPGW(\kappa,\beta)$, restricted to distance $d_n$ and time $[0,t_n]$, so that they coincide (in $\DNb^S$) with probability at least
 	\begin{equation}\label{eq:failure-coupling-vertex}
 	1- \frac {\left(c(1+t_n)\right)^{2 d_n+1}}{n} - \left(c(1+t_n)\right)^{d_n+1} \sum_{z\in S} \Big|\frac{n_z}{n}-\mu_z\Big|.
 	\end{equation}
 \end{thm}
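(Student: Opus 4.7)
The plan is to follow closely the three-stage argument of \Cref{subsection:coupling-finitely-many-types} that established \Cref{thm:main-result-2}, adapting it to the vertex-updating dynamics. I would first introduce the $k$-tuple of ordered segmented trees $(F^{(n),1}_t,\ldots, F^{(n),k}_t)_{t\leq t_n}$ associated to the dynamical balls in $H^{n,\kappa,\beta,[d_n]}$, stopped at the hitting time $T_\dagger$ of a cemetery state $\dagger$ which now collects three bad events: the balls collide; an edge is segmented and later recreated on the same pair; or an external vertex updates and simultaneously creates two or more edges into $\bigcup_j V^{(n),j}$. Writing $\mathbf f = |\bfo_1|+\cdots+|\bfo_k|$ for the current total size, this third event contributes rate $O(\mathbf f^2/n)$ (with constant depending on $\max \kappa$ and $\max\beta$), of the same order as the collision terms already treated in the edge-updating case.

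Next, I would write down the transition rates of $(F^{(n),j})$ out of a state $(\bfo_1,\ldots,\bfo_k)$. The vertex update creates two families of transitions. Each ball vertex $u$ of type $x$ in $\bfo_l$ fires at rate $\beta(x)$: all its incident edges are segmented and its new external offspring is redrawn, type by type, according to $\mubin_{n_y - b^u_y,\kappa(x,y)/n}$ variables; the resulting jump distribution $\mu^{(n),\Fu}_{(\bfo_1,\ldots,\bfo_k),l,u}$ is defined in the same vein as $\mu^{(n)}_{(\bfo_1,\ldots,\bfo_k),l,u,x}$ in \Cref{subsection:coupling-finitely-many-types}, but using the merge operation $\Fu(\bfo_l,\bt,u)$ in place of $\Gr$. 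Second, for each type $y$, each external vertex of type $y$ updates at rate $\beta(y)$ and grows a single new edge to each depth-$<d_n$ vertex $u\in\bfo_l$ at rate $\kappa(x^l_u,y)/n$, triggering a $\Gr$ transition whose binomial offspring structure is identical to the edge-updating case. The transition rates of $VGSMPGW(\kappa,\beta)^{[d_n]}$ out of the same state are obtained simply by replacing every $\mubin_{\cdot,\kappa(\cdot,y)/n}$ by $\mupo_{\kappa(\cdot,y)\mu_y}$ and every prefactor $n_y/n$ by $\mu_y$.

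I would then interpose the intermediate process $(\tilde H^{(n),j})$ with the same jump structure as $(F^{(n),j})$ but Poisson offspring, bound the $\ell^1$ rate difference with $(\Ho^{j,[d_n]})$ by $C(\kappa,\beta)(\mathbf f^2/n + \mathbf f\sum_z|n_z/n-\mu_z|)$, and feed this into the Markov coupling lemma of \Cref{appendix:coupling-MP} to produce an error bound of the form~\eqref{eq:couplage1}. Then I would couple $(F^{(n),j})$ to $(\tilde H^{(n),j})$ vertex by vertex, pairing each successive binomial offspring family with the corresponding Poisson family via $d_{TV}(\mubin_{n,p},\mupo_{np})\leq p$, exactly as in \Cref{subsection:coupling-finitely-many-types}; the only change is that each ball vertex now contributes up to two offspring draws (upon discovery, and upon its own later refresh), so that the relevant exploration count is bounded by $2\mathbf Z_n$ with $\mathbf Z_n=\sum_j|\Ho^{j,[d_n]}_{t_n}|$. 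Finally, dominating each $\Ho^{j,[d_n]}$ by a Galton--Watson tree with Poisson offspring of mean
$$\lambda_n = 1+\max_{x,y\in S}\kappa(x,y)\bigl(1+(\beta(x)+\beta(y))t_n\bigr),$$
where the term $\beta(x)+\beta(y)$ now accounts for refreshes of both parent and child, the classical branching estimates give $\E{\mathbf Z_n}\leq 2\lambda_n^{d_n}$ and $\E{\mathbf Z_n^2}\leq 16\lambda_n^{2d_n}$, which combine into~\eqref{eq:failure-coupling-vertex}. The main delicate point compared to the edge-updating proof is the merge transition: refreshing an already-explored ball vertex may reveal a large burst of new children at once, and one has to verify both that the new offspring are still conditionally independent of the explored past (which follows from the independence of the vertex update clocks and of the edges to unexplored vertices) and that this bursting does not break the Markovian moment bounds feeding~\eqref{eq:couplage2}.
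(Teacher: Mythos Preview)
Your proposal is correct and follows exactly the approach the paper intends: the paper does not give a separate proof of this theorem, stating only that the results of \Cref{section:main-results} ``can be adapted to these new settings, in a similar way that we do not detail'', so your detailed sketch of how to rerun the three-stage argument of \Cref{subsection:coupling-finitely-many-types} with the $\Fu$/$\Gr$ split, the extra cemetery event for multi-edge external updates, and the modified offspring rate $\beta(x)+\beta(y)$ is precisely the omitted adaptation. One small refinement: a ball vertex may update several times before $t_n$, not just once, so ``up to two offspring draws'' undercounts; however each such update produces fresh vertices that are already accounted for in $\mathbf Z_n$, so the total number of offspring-reveal events is still $O(\mathbf Z_n)$ and your moment bounds go through unchanged.
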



\section{Application to the Contact Process}\label{section:CP}

We briefly describe an application of the theory of local convergence for dynamical graphs developed in this paper. In a recent work \cite{Jacob-Linker-Morters22}, the authors consider the contact process on a dynamical inhomogeneous random graph.
 Their graph model fits our settings when we take the mark space to be $S=[0,1]$, $x_i^{(n)}=i/n$, with connection kernel to be either of the following kernels:
 \begin{itemize}
 \item the \emph{factor kernel} $\kappa(x,y)= a x^{-\gamma} y^{-\gamma}$,
 \item the \emph{preferential attachment kernel} $\kappa(x,y)=a (x \wedge y)^{-\gamma} (x \vee y)^{\gamma-1}$, 
 \item the \emph{strong  kernel} $\kappa(x,y)=a (x \wedge y)^{-\gamma}$, 
 \item the \emph{weak kernel} $\kappa(x,y)=a (x \vee y)^{-\gamma-1}$,
 \end{itemize}
 and updating kernel
 $$ \beta(x,y)=b \left(x^{-\gamma \eta}+y^{-\gamma \eta}\right) \,, $$
 where $a>0$, $b>0$, $\gamma\in (0,1)$ and $\eta\in \R$ are various parameters of the model.
 Applying Theorem \ref{thm:main-result-0}, we directly obtain that their randomly rooted graphs converge locally in probability to the corresponding $GSMPGW(\kappa,\beta)$.
 
 Still in~\cite{Jacob-Linker-Morters22}, on top of this graph dynamics, the authors consider the contact process, which is a popular model of spread of an infection in a population, defined as a continuous-time Markov chain, where every infected vertex infects each of its neighbours with rate $\lambda$ and becomes healthy with rate $1$. It is easily seen that on finite graphs, the infection will become extinct in finite time. 
However, they are interested in the size of the extinction time of the infection, and they showed that there is a transition phase (in the parameters $\gamma$ and $\eta$) between \textit{fast extinction} and \textit{slow extinction}. 
We say that fast extinction occurs if, for some sufficiently small infection rate $\lambda>0$, the expected extinction time is bounded by a power of the network size. 
We say that there is slow extinction if, for every infection rate $\lambda>0$, the mean extinction time is exponential in the number of vertices. 
In the slow extinction regime, they could obtain the following \textbf{metastability result}: 
writing $I_n(t)$ for the expected density of infected vertices at time $t$ in the contact process run on the dynamical graphs with initially all $n$ vertices infected, there exists some $\varepsilon>0$ such that:
\begin{enumerate}
	\item\label{metastable 1} For every sequence $(t_n)$ going to infinity slower than $e^{\varepsilon n}$, we have 
	$$ \liminf_{n\to\infty} I_n(t_n) > 0 . $$
	\item\label{metastable 2} For every sequences $(s_n)$ and $(t_n)$ both going to infinity slower than $e^{\eps n}$, we have 
	$$ I_n(s_n)-I_n(t_n) \underset{n\to \infty} \longrightarrow 0 \, . $$
 \end{enumerate}

 Consequently, we can unambiguously define the lower metastable density $\rho^-(\lambda)=\liminf_{n\to\infty} I_n(t_n)>0$ and the upper metastable density $\rho^+(\lambda)=\limsup_{n\to\infty} I_n(t_n)$, whenever $(t_n)$ goes to infinity slower than $e^{\varepsilon n}$.
 Take any sequence $(r_n)$ going to infinity slower than $e^{\varepsilon n}$ and set $a_n := I_n(r_n)$.  The function $I_n$ is actually non-increasing, so in particular whenever $1\ll s_n\ll t_n \le e^{\varepsilon n}$, the function $I_n$ is approximately constant equal to $a_n$ on the whole time-interval $[s_n,t_n]$ from \ref{metastable 2}. And from \ref{metastable 1}, $\liminf a_n>0$ implies that $a_n>c>0$ for some constant $c>0$ and all $n\ge0$ sufficiently large. 
 However, they could not show how this value depends on $n$, and in particular, whether $\rho^+(\lambda)=\rho^-(\lambda)$ or not. 
 This identity is however a simple consequence of our local convergence result, as we now explain.

 \begin{thm}
 	[The metastable density in the slow extinction regime]
 	\label{thm:metastable_density}
 	Write $\theta(\lambda)$ for the probability that the contact process with infection rate $\lambda$ survives on the $GSMPGW(\kappa,\beta)$, starting from only the root infected. Then, in the slow extinction regime, we have for every $\lambda>0$:
 	\[
 	\rho^+(\lambda)=\rho^-(\lambda)=\theta(\lambda)>0 \,.
 	\]
 \end{thm}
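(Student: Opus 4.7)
The plan is to prove $\rho^+(\lambda)\le\theta(\lambda)$ and $\rho^-(\lambda)\ge\theta(\lambda)$ separately, via an interchange-of-limits argument combining the self-duality of the contact process with \Cref{thm:main-result-0} and the two metastability properties recalled above; the strict positivity $\theta(\lambda)>0$ will then follow immediately from property~(1).

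First I would set up the contact process on the dynamical graph via Harris's graphical construction: on top of $G^{n,\kappa,\beta}$, place independent Poisson processes of infection arrows of rate $\lambda$ in each direction along each edge (thinned by the edge indicator process), together with Poisson healing marks of rate $1$ at each vertex. Since the dynamical IRG $G^{n,\kappa,\beta}$ is a stationary, reversible Markov process in time (each edge being a 2-state Markov chain at equilibrium), its time-reversal has the same law, and the classical self-duality of the contact process yields, for a uniformly chosen root $\mathbf o_n$,
\[
\frac{I_n(t)}{n}=\bE\bigl[\bP^{\{\mathbf o_n\}}(\eta_t\ne\emptyset)\bigr],
\]
where on the right the contact process is started from only $\mathbf o_n$ infected.

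Next I would prove that for every fixed $s>0$,
\[
\frac{I_n(s)}{n}\longrightarrow \theta_s(\lambda):=\bP^{\{o\}}_{GSMPGW(\kappa,\beta)}(\eta_s\ne\emptyset).
\]
The key point is that the contact process started from a single infected vertex is stochastically dominated by a continuous-time branching process with bounded offspring rate, hence for every $\varepsilon>0$ there is a radius $d_\varepsilon$ such that the contact process stays within the dynamical ball of radius $d_\varepsilon$ up to time $s$ with probability at least $1-\varepsilon$, uniformly in $n$ and also on $GSMPGW(\kappa,\beta)$ (the uniform control on the growth of the dynamical ball being available thanks to \Cref{thm:main-result-2}). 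The truncated survival probability on the dynamical ball of radius $d_\varepsilon$ is a bounded continuous function of $(G,o)\in\DNb^S$, so by \Cref{thm:main-result-0} it converges as $n\to\infty$ to the corresponding quantity on $GSMPGW(\kappa,\beta)$; letting $\varepsilon\to 0$ removes the truncation.

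The upper bound $\rho^+(\lambda)\le\theta(\lambda)$ then follows from the monotonicity of $t\mapsto I_n(t)$ when started from the all-infected configuration: for any fixed $s$, $I_n(t_n)\le I_n(s)$ eventually, hence $\rho^+(\lambda)\le\theta_s(\lambda)$, and letting $s\to\infty$ gives $\rho^+(\lambda)\le\theta(\lambda)$ since $\theta_s(\lambda)\downarrow\theta(\lambda)$. For the lower bound, fix any sequence $s_n\to\infty$ slower than $e^{\varepsilon n}$; by metastability property~(2), $|I_n(t_n)-I_n(s_n)|/n\to 0$, so $\rho^-(\lambda)=\liminf_n I_n(s_n)/n$. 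By monotonicity, $I_n(s_n)/n\ge I_n(s)/n$ for every fixed $s$ and $n$ large enough, hence $\rho^-(\lambda)\ge\theta_s(\lambda)$; letting $s\to\infty$ gives $\rho^-(\lambda)\ge\theta(\lambda)$. The main obstacle is the approximation step for fixed $s$: producing the uniform (in $n$) stochastic domination that allows one to truncate the contact process at distance $d_\varepsilon$ with a controlled error. After coupling with $GSMPGW(\kappa,\beta)$ via \Cref{thm:main-result-2}, this reduces to standard moment estimates for Galton--Watson processes, while the continuity of the truncated survival event is automatic since, conditionally on a finite dynamical ball, that event is decided using only this ball and an independent graphical construction.
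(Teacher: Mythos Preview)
Your overall architecture is exactly that of the paper: use duality to rewrite $I_n(t)$ as the survival probability from a uniformly chosen root, prove $I_n(s)\to\theta_s(\lambda)$ for each fixed $s$ via local convergence, and then interchange limits. Your treatment of the convergence $I_n(s)\to\theta_s(\lambda)$ is even more careful than the paper's, which simply asserts that the functional $h_s\colon (G,o)\mapsto \bP^{\{o\}}_{(G,o)}(\eta_s\ne\emptyset)$ is continuous on $\DGb$; your truncation via stochastic domination by a branching process is precisely a way to justify that continuity. The upper bound $\rho^+(\lambda)\le\theta(\lambda)$ is correct and identical to the paper's argument.

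The lower bound, however, contains a genuine error. You write ``by monotonicity, $I_n(s_n)\ge I_n(s)$ for every fixed $s$ and $n$ large enough''; but $t\mapsto I_n(t)$ is \emph{nonincreasing} (the contact process starts from the all-infected configuration), so for $s_n\ge s$ one has $I_n(s_n)\le I_n(s)$, the reverse inequality. This only reproves $\rho^-\le\theta_s$, not $\rho^-\ge\theta_s$. Properties~(1) and~(2) as stated, with both sequences going to infinity, do not by themselves give a direct comparison of $I_n(t_n)$ to $I_n(t)$ for a \emph{fixed} $t$.

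The paper closes this gap by going back to the proof of metastability in~\cite{Jacob-Linker-Morters22} and extracting a stronger statement: for every $r>0$ there exists a fixed $t>0$ such that $I_n(t)-I_n(t_n)\le r$ uniformly in large $n$. Combined with $I_n(t)\to\theta_t(\lambda)\ge\theta(\lambda)$, this yields $\liminf_n I_n(t_n)\ge\theta(\lambda)-r$ for every $r>0$, hence $\rho^-(\lambda)\ge\theta(\lambda)$. So the missing ingredient is precisely this uniform-in-$n$ closeness of $I_n(t_n)$ to $I_n(t)$ for a well-chosen fixed $t$, which is not a consequence of monotonicity but a genuine input from the metastability analysis. (Alternatively, one can derive this from property~(2) together with $I_n(s)\to\theta_s$ by a diagonalisation argument: if $\rho^-<\theta$, one can build along a subsequence a sequence $s_n\to\infty$ with $I_n(s_n)-I_n(t_n)$ bounded away from zero, contradicting~(2). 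But this is not the monotonicity argument you gave.)
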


 \begin{proof}[Proof of Theorem \ref{thm:metastable_density}]
 Using the duality property of the contact process, see for instance \cite[Proposition 3]{Jacob-Linker-Morters19}, and of the dynamical inhomogeneous graph, we can rewrite $I_n(t)$ as
$$ I_n(t)=\frac 1 n \sum_{v\in \{1,\ldots, n\}} \bP_v\left(\mbox{Infection has not been extinct by time }t\right) \,, $$
where $\bP_v$ stands for the contact process starting from only $x$ infected.
 Looking again at the proof of metastability by \cite{Jacob-Linker-Morters22}, the authors actually prove that for every small error $r>0$, there exists $t>0$ such that uniformly in large $n$, we have $I_n(t_n)- I_n(t)\le r$. Therefore, it suffices to prove the convergence of $I_n(t)$ when $n\to 0$ for fixed $t$ to obtain the claimed result. To this end, simply consider the functional $h_t:\DGb\to [0,1],$ which to a given dynamical graph, associates the probability that a contact process started from only the root infected, survives in this dynamical graph up to time $t$. The functional $h_t$ is clearly continuous. Hence we obtain
 \begin{equation} \label{convergence_Int}
 I_n(t)  \underset{n\to \infty} \longrightarrow 
 \E{h_t(GSMPGW(\kappa,\beta))} \, ,
 \end{equation}
 and subsequently $\rho^+(\lambda)=\rho^-(\lambda)$ as claimed. We can further write it as simply $\rho(\lambda)$ and call it the \emph{metastable density}. Furthermore, using that the graphs converge \emph{locally in probability} and not only \emph{locally weakly}, we can strengthen~\eqref{convergence_Int} by conditioning on the evolution of the dynamical graph (but not of the contact process run on this graph) and obtain the convergence in probability of the corresponding random variables. Finally, we have
 $$ \rho(\lambda)=\lim_{t\to \infty} \E{h_t(GSMPGW(\kappa,\beta))}=\E{h(GSMPGW(\kappa,\beta))} \, , $$
 where $h$ is now the probability that the contact process on the dynamical graph survives\footnote{Of course, the functional $h$ is not a continuous functional!} for all times $t\in \R_+$.
 \end{proof}

Let us stress that Theorem~\ref{thm:metastable_density} only holds in the slow extinction regime, which we defined as the regime in which $\theta(\lambda)>0$ for all $\lambda>0$. Although we do not prove this, it is natural to conjecture that the link between metastability and $\theta(\lambda)>0$ holds more generally. Stated differently, even in the fast extinction regimes (in the parameters $\gamma$ and $\eta$), we expect to observe a phase transition in $\lambda$ between a regime exhibiting metastability with metastable density $\theta(\lambda)>0$, and a regime with $\theta(\lambda)=0$ and subexponential extinction time for the infection.

\begin{conj}[Local survival is equivalent to metastability]\label{conj:LSmetastability}
	For every parameter $\gamma$, $\eta$, and $\lambda$, the following are equivalent:
	\begin{itemize}
		\item $\theta(\lambda)=0.$
		\item The extinction time is subexponential in $n$.  
	\end{itemize}
	Moreover, if $\theta(\lambda)>0$, then the mean extinction time is exponential in $n$ and there is metastability of the infection, with metastable density $\theta(\lambda)$.
\end{conj}

Finally, we mention~\cite{Jacob-Linker-Morters19,Jacob-Linker-Morters24}, where the authors study the contact process on vertex-updating inhomogeneous random graphs. We can adapt Theorem~\ref{thm:metastable_density} and Conjecture~\ref{conj:LSmetastability} to these settings. In particular, in the slow extinction regime, we have again metastability for all $\lambda>0$, with the metastable density now given by the survival probability of the contact process on the $VGSMPGW(\kappa,\beta)$, starting from only the root infected.



\appendix


\section{Coupling of Markov Processes}\label{appendix:coupling-MP}

 \begin{lem}[Coupling of Markov processes]\label{lem:coupling-Markov-process}
 Let $X$ and $Y$ be two continuous time Markov processes on a countable state space $E$ with transition rates $( \tau^X_{e,f} )_{e,f\in E}$ and $( \tau^Y_{e,f} )_{e,f\in E}$ respectively, started from same state $e_0\in E$.
 For $e\in E$, we define
 \begin{equation}
 \Delta(e):= \sum_{f\in E, \: f\neq e} | \tau^X_{e,f} - \tau^Y_{e,f} | \,.
 \end{equation}
 Then there exists a coupling $(\tilde X,\tilde Y)$ of $(X,Y)$ such that for all time $t\ge 0$,
 \begin{equation}
 \Prob{ \exists s\le t, \: \tilde X_s \neq \tilde Y_s }  \le t\ \E{\sup_{s\le t} \Delta(X_s)} \, .
 \end{equation}
 \end{lem}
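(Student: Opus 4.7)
The plan is to construct the natural coupling that keeps the two processes together as long as possible, then bound the decoupling time via a state-dependent Poisson clock. Concretely, I would define $(\tilde X, \tilde Y)$ as a Markov process on $E \times E$ started at $(e_0, e_0)$ with the following transitions from a diagonal state $(e,e)$: for each $f \neq e$, jump to $(f,f)$ at rate $\tau^X_{e,f} \wedge \tau^Y_{e,f}$, to $(f,e)$ at rate $(\tau^X_{e,f} - \tau^Y_{e,f})_+$, and to $(e,f)$ at rate $(\tau^Y_{e,f} - \tau^X_{e,f})_+$; once off the diagonal, the two coordinates evolve independently with their respective rates. Summing these rates shows that $\tilde X$ and $\tilde Y$ have marginally the correct transition rates $\tau^X$ and $\tau^Y$, so this is indeed a coupling of $X$ and $Y$.

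The key observation is that the total rate at which the coupled process leaves the diagonal from $(e,e)$ is exactly
$$ \sum_{f \neq e} \left[ (\tau^X_{e,f} - \tau^Y_{e,f})_+ + (\tau^Y_{e,f} - \tau^X_{e,f})_+ \right] = \sum_{f \neq e} |\tau^X_{e,f} - \tau^Y_{e,f}| = \Delta(e) \,. $$
Hence the decoupling time $\sigma \coloneqq \inf\{s \ge 0 : \tilde X_s \neq \tilde Y_s\}$ can be realized, conditionally on the trajectory $(\tilde X_s)_{s \ge 0}$, as the first arrival of an inhomogeneous Poisson clock of intensity $\Delta(\tilde X_s)\,\de s$. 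The elementary bound $1 - e^{-x} \le x$ then yields
$$ \Prob{\sigma \le t} \le \E{\int_0^t \Delta(\tilde X_s) \, \de s} \le t \, \E{\sup_{s \le t} \Delta(\tilde X_s)} \,, $$
and since $\tilde X$ has the same law as $X$ as a process, the right-hand side equals $t\,\E{\sup_{s \le t} \Delta(X_s)}$, which is precisely the claimed bound.

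The only subtlety, and hence the main (minor) obstacle, is to make the above construction rigorous when the total jump rates out of some states are unbounded. Since $X$ and $Y$ are assumed to be honest (non-exploding) Markov processes on the countable set $E$, one can build the coupled process explicitly on any finite time interval, for instance by first restricting to a finite truncation $E_N \uparrow E$ and passing to the monotone limit, or by assembling $(\tilde X, \tilde Y)$ from its embedded jump chain together with independent exponential clocks indexed by the rate decomposition above. This is routine and does not affect the quantitative bound.
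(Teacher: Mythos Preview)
Your coupling is the same as the paper's: the paper phrases it via a graphical representation built from independent Poisson point processes of intensities $\tau^X_{e,f}\wedge\tau^Y_{e,f}$, $(\tau^X_{e,f}-\tau^Y_{e,f})_+$ and $(\tau^Y_{e,f}-\tau^X_{e,f})_+$, which is exactly your rate description on $E\times E$. The strategy and the final bound coincide.

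There is one imprecise step. You assert that, conditionally on the trajectory of $\tilde X$, the decoupling time $\sigma$ is the first arrival of an inhomogeneous Poisson clock of intensity $\Delta(\tilde X_s)$. This is not quite true: given $\tilde X$, the law of $\sigma$ has \emph{atoms} at the jump times of $\tilde X$ (a jump of $\tilde X$ from $e$ to $f$ is a decoupling jump with conditional probability $(\tau^X_{e,f}-\tau^Y_{e,f})_+/\tau^X_{e,f}$), plus a continuous hazard $\sum_f(\tau^Y_{\tilde X_s,f}-\tau^X_{\tilde X_s,f})_+$ between jumps. The paper sidesteps this by introducing the auxiliary process $Z$ with rates $\tau^Z_{e,f}=\tau^X_{e,f}\wedge\tau^Y_{e,f}$ and conditioning on $Z$ instead: then $\sigma$ \emph{is} the first point of a Poisson process with intensity $\Delta(Z_s)$, and since $Z_s=\tilde X_s$ on $\{\sigma>s\}$ one gets the clean compensator identity
\[
\Prob{\sigma\le t}=\E{\int_0^t \1{\sigma>s}\,\Delta(\tilde X_s)\,\de s}\le \E{\int_0^t \Delta(\tilde X_s)\,\de s}\le t\,\E{\sup_{s\le t}\Delta(\tilde X_s)}.
\]
Your conclusion is correct; only the justification of the inequality $\Prob{\sigma\le t}\le \E{\int_0^t\Delta(\tilde X_s)\,\de s}$ needs this small repair.
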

 
 \begin{proof}
 We let $Z$ be a Markov process started from $e_0$ with transition rates $\tau^Z_{e,f} := \min( \tau^X_{e,f},\tau^Y_{e,f} )$, and constructed from a graphical representation of independent PPP $(\Pi^Z_{e,f})_{e,f\in E}$ with intensities $\tau^Z_{e,f}$.
 We also let $(\Pi^{Z,X}_{e,f})_{e,f\in E}$ and $(\Pi^{Z,Y}_{e,f})_{e,f\in E}$ be independent families of PPP with respectively intensities $\tau^X_{e,f} - \tau^Z_{e,f}$ and $\tau^Y_{e,f} - \tau^Z_{e,f}$ respectively.
 We now define $\Pi^X_{e,f} = \Pi^Z_{e,f}\cup\Pi^{Z,X}_{e,f}$ for $e,f\in E$, and observe these are independent PPP with intensities $\tau^X_{e,f}$.
 These can thus be used to construct a Markov process $\tilde X$ with same law as $X$. We similarly construct $\tilde Y$ with same law as $Y$.
 
 We now work conditionally on the process $(Z_s)_{s\ge0}$, and write $0=s_0<s_1<\dots<s_k=t$ its jumping times up to time $t$ and $e_0,\dots,e_{k-1}$ its successive states, so that $Z_s = e_i$ for $s_i\le s < s_{i+1}$. We also let $T:=\inf\{s\ge 0,\, \tilde X_s\ne Z_s \text{ or }\tilde Y_s\ne Z_s\}$.
 Letting the conditioning on $Z$ be implicit to enlighten the notation, we have, for $0\le i\le k-1$:
 \begin{align*}
 \Prob{ T\le s_{i+1} \mid T> s_{i} }
 & = 1-\Prob{ \bigg( \bigcup_{f\ne e_i} (\Pi^{Z,X}_{e_i,f} \cup \Pi^{Z,Y}_{e_i,f}) \bigg) \cap [s_i,s_{i+1}[ =\emptyset } \\
 & = 1-\exp\left( - (s_{i+1}-s_i) \Delta(e_i) \right)= 1-\exp\left( - (s_{i+1}-s_i) \Delta(\tilde X_{s_i}) \right)  \\
 &\le (s_{i+1}-s_i)\sup_{s\le t}\Delta (\tilde X_s).
 \end{align*}
 Summing over $i$, we obtain
 $$ \Prob{T\le t \mid Z}\le t \sup_{s\le t}\Delta (\tilde X_s) \,, $$
 and we conclude by taking the expectation of this inequality.
\end{proof}

\subsection*{Acknowledgement}

The authors would like to thank the anonymous referees, an Associate Editor and the Editor for their constructive comments that improved the quality of this paper.

The first author LD was supported by a InfoMaths PhD Fellowship (ED-512) and was carried out while he was an intern at Unit\'e de Math\'ematiques Pures et Appliqu\'ees, Lyon.
The second author EJ was supported by CNRS, by the LABEX MILYON (ANR-10-LABX-0070) of Universit\'e de Lyon and by GrHyDyn (ANR-20-CE40-0002).


\addcontentsline{toc}{section}{References}
\bibliographystyle{abbrv}
\bibliography{LocDynDortJacob_v2}

\bigskip
\bigskip

\begin{flushleft}
\textsc{L\'eo Dort}
 
\textsc{Unit\'e de Math\'ematiques Pures et Appliqu\'ees, \'Ecole Normale Sup\'erieure de Lyon}

\textit{Email:} {\color{Myred}\texttt{leo.dort@ens-lyon.fr}}

\bigskip

\textsc{Emmanuel Jacob}
 
\textsc{Unit\'e de Math\'ematiques Pures et Appliqu\'ees, \'Ecole Normale Sup\'erieure de Lyon}

\textit{Email:} {\color{Myred}\texttt{emmanuel.jacob@ens-lyon.fr}}
\end{flushleft}

\end{document}